\documentclass[11pt, reqno, twoside, letterpaper]{amsart}

\usepackage[
 letterpaper, twoside,
 inner=1.35in, outer=1.40in,
 top=1.25in,  bottom=1.25in,
 headsep=16pt, footskip=30pt,
]{geometry}

\usepackage{amsmath}
\usepackage{mathtools}

\usepackage{libertinus}
\usepackage{microtype}

\makeatletter
\g@addto@macro\normalsize{%
 \setlength\abovedisplayskip{13pt plus 3pt minus 4pt}%
 \setlength\belowdisplayskip{13pt plus 3pt minus 4pt}%
 \setlength\abovedisplayshortskip{0pt plus 3pt}%
 \setlength\belowdisplayshortskip{9pt plus 3.5pt minus 3pt}%
}
\makeatother

\usepackage{xcolor}
\usepackage{enumitem}
\usepackage{etoolbox}
\usepackage{csquotes}    

\usepackage{cite}

\usepackage{pdflscape}
\usepackage{array}
\usepackage{booktabs}
\usepackage{ragged2e}
\usepackage{tabularx}
\usepackage{graphicx}
\usepackage{eso-pic}

\newcolumntype{L}[1]{>{\RaggedRight\arraybackslash}p{#1}}
\newcolumntype{C}[1]{>{\Centering\arraybackslash}p{#1}}
\newcolumntype{Z}{>{\RaggedRight\arraybackslash}X}

\usepackage{fancyhdr}
\fancypagestyle{landscapetable}{%
  \fancyhf{}
  \fancyfoot[C]{\thepage}
  
}

\numberwithin{equation}{section}

\theoremstyle{plain}
\newtheorem{X}{X}[section]
\newtheorem{theorem}[X]{Theorem}
\newtheorem{lemma}[X]{Lemma}
\newtheorem{corollary}[X]{Corollary}
\newtheorem{proposition}[X]{Proposition}
\newtheorem*{theorem*}{Theorem}

\newtheorem*{conjecture*}{Conjecture}
\newtheorem{hypothesis}[X]{Hypothesis}
\newtheorem*{hypothesis*}{Hypothesis}

\theoremstyle{definition}

\theoremstyle{remark}

\newtheorem*{remark*}{Remark}

\makeatletter
\pretocmd{\@thm}{%
  \setlength{\parindent}{\z@}%
  \setlength{\parskip}{4pt plus 1pt}
}{}{\PackageWarning{preamble}{Could not patch \string\@thm}}
\makeatother

\usepackage{footnotehyper}   

\newcommand{\KeepTogether}[1]{%
  \AddToHook{env/#1/before}{\par\savenotes\vbox\bgroup}%
  \AddToHook{env/#1/after}{\egroup\spewnotes}%
}
\KeepTogether{theorem}
\KeepTogether{theorem*}
\KeepTogether{lemma}
\KeepTogether{corollary}
\KeepTogether{proposition}
\KeepTogether{definition}
\KeepTogether{conjecture}
\KeepTogether{conjecture*}
\KeepTogether{hypothesis}
\KeepTogether{hypothesis*}

\allowdisplaybreaks[1]

\renewcommand{\le}{\leqslant}
\renewcommand{\ge}{\geqslant}

\makeatletter
\apptocmd{\thebibliography}{%
 \raggedright
 \@rightskip=\z@ \@plus 3em
 \rightskip=\@rightskip
 \parfillskip=\z@ \@plus 1fil
 \frenchspacing
}{}{\PackageWarning{preamble}{Could not patch thebibliography}}
\makeatother

\makeatletter
\patchcmd{\@tocline}
 {\hfil}
 {\leaders\hbox{$\m@th\mkern 4.5mu\hbox{.}\mkern 4.5mu$}\hfill}
 {}{\PackageWarning{preamble}{Could not patch \string\@tocline}}
\makeatother

\usepackage{hyperref}
\hypersetup{
 unicode=true,
 pdftitle={Biases in the distribution of primes in short intervals},
 pdfauthor={Tristan Freiberg},
 pdfsubject={Number theory},
 pdfstartview={FitH},
 pdfmenubar=false,
 pdffitwindow=false,
 pdfnewwindow=true,
 bookmarksnumbered=true,
 linktoc=all,
 colorlinks=true,
 linkcolor={black},
 citecolor={black},
 filecolor={black},
 urlcolor={black},
}
\newcommand{\DOI}[1]{\href{https://doi.org/#1}{doi:#1}}
\newcommand{\ARXIV}[1]{\href{https://doi.org/10.48550/arXiv.#1}{arXiv:#1}}

\title[Biases in the distribution of primes in short intervals]{Biases in the distribution of primes in short intervals}
\author[T. Freiberg]{Tristan Freiberg}
\address{Montr\'eal, Canada}
\thanks{The use of AI in developing the proof and preparing the exposition is described in Appendix~\ref{app:ai-provenance}}
\subjclass[2020]{Primary 11N05; Secondary 11N35, 11N36}
\keywords{primes in short intervals, Hardy--Littlewood prime tuples conjecture, singular series, Poisson distribution, Selberg sieve}
\date{\today}

\begin{document}

\begin{abstract}
Assuming a suitably uniform Hardy--Littlewood prime tuples hypothesis, we obtain a second-order asymptotic for the proportion of short intervals containing a prescribed number of primes, when the interval length is comparable to the average prime spacing. The leading term is Poisson, but the arithmetic correction differs from the binomial correction in Cram\'er's independent model and predicts a stronger bias toward counts near the mean. The proof combines inclusion--exclusion with singular-series estimates of Montgomery and Soundararajan and a finite-sieve argument that controls the required alternating sums as the truncation order grows. We also give a refined random model that reproduces the correction and numerical comparisons that support the prediction.
\end{abstract}

\maketitle

\tableofcontents
\clearpage

\section{Introduction}
\label{sec:intro}

Let $\pi(x)$ denote the number of primes less than or equal to $x$. One way to formulate the prime number theorem is as follows: for fixed $\lambda > 0$, 
\begin{equation*}
\frac{1}{N}\sum_{n = 1}^{N} \left(\pi(n + \lambda\log n) - \pi(n)\right) \sim \lambda \qquad (N \to \infty).
\end{equation*}
Thus, on average over $n \le N$, the short interval $(n, n + \lambda\log n]$ contains $\lambda$ primes. 

Cram\'er's random model \cite{CRA1935, CRA1936} predicts a Poisson distribution for the number of primes in short intervals; see \cite[equation~(15)]{GRA1995} and \cite[Exercise~1]{SOU2007}. Specifically, for each fixed real $\lambda > 0$ and integer $m \ge 0$,
\begin{equation*}
\frac{1}{N}\left|\{1 \le n \le N : \pi(n + \lambda\log n) - \pi(n) = m\}\right| \sim \frac{e^{-\lambda}\lambda^{m}}{m!} \qquad (N \to \infty).
\end{equation*}
Gallagher \cite{GAL1976} showed that this follows from a sufficiently uniform version of the Hardy--Littlewood prime $k$-tuples conjecture. Earlier, Hooley \cite[p.~137, Problem~(iii)]{HOO1973} had remarked that a suitable form of those conjectures implies an exponential distribution for the normalized gaps $(p_{n + 1} - p_{n})/\log p_{n}$, where $p_{n}$ denotes the $n$th prime. He described this as a gamma distribution with parameter $1$.

Gallagher's proof used the method of moments, with the key input that the singular series in the Hardy--Littlewood prime $k$-tuples conjecture  \cite{HL1923} is of average order $1$. The author \cite[Theorem~4.1]{FRE2018} gave an alternative proof by direct inclusion-exclusion, also treating the joint distribution of prime counts in several disjoint short intervals. This approach allows error terms in the prime-tuple estimates to be tracked explicitly. It was suggested in \cite[Section~5]{FRE2018} that combining it with the more precise singular-series average of Montgomery and Soundararajan \cite[equation~(17)]{MS2004} could yield lower-order terms in the conditional asymptotic for the distribution of primes in short intervals. We carry out this program here.

The general results require some preparation, so we state here a representative theorem for intervals with left endpoint between $1$ and $N$. The more general statements, with substantially weaker hypotheses, appear in Section~\ref{sec:prime-tuples-to-prime-counts}.

Write $\mathfrak{S}(\mathcal{H})$ for the singular series associated with a finite set $\mathcal{H}$ of integer shifts; its definition is recalled in Subsection~\ref{subsec:hardy-littlewood-residual}. We use the following strong form of the Hardy--Littlewood prime $k$-tuples conjecture.

\begin{hypothesis}[Uniform Hardy--Littlewood]
\label{con:uniform-hardy-littlewood-hypothesis}
For every fixed $U > 0$ and $\epsilon > 0$,
\begin{equation*}
\left|\{1 \le n \le N : n + h \text{ is prime for every } h \in \mathcal{H}\}\right| = \mathfrak{S}(\mathcal{H})\int_{2}^{N}\frac{dt}{(\log t)^{k}} + O_{U,\epsilon}\left(N^{1/2 + \epsilon}\right),
\end{equation*}
uniformly for integers 
\begin{equation*}
2 \le H \le U\log N, \qquad 2 \le k \le \log H,
\end{equation*}
and all $k$-element subsets $\mathcal{H}$ of $\{1,\ldots,H\}$. In particular, the implied constant is independent of $k$ and $\mathcal{H}$.
\end{hypothesis}

\begin{theorem}
\label{thm:intro-corrected-poisson}
Assume Hypothesis \ref{con:uniform-hardy-littlewood-hypothesis}. Fix an integer $m \ge 0$ and real numbers $0 < V \le U$. For integers $H$ satisfying $V\log N \le H \le U\log N$, put
\begin{equation*}
\lambda := \frac{H}{\log N - 1}.
\end{equation*}
Then, as $N \to \infty$, uniformly over this range of $H$,
\begin{multline*}
\frac{1}{N}\left|\{1 \le n \le N : \pi(n + H) - \pi(n) = m\}\right| \\ = \frac{e^{-\lambda}\lambda^{m}}{m!}\left\{1 - \frac{\log H + \log(2\pi) + \gamma - 1}{2H}\left((m - \lambda)^{2} - m\right)\right\} + o\left(\frac{1}{H}\right),
\end{multline*}
where $\gamma$ is the Euler--Mascheroni constant.
\end{theorem}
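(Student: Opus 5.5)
We prove the statement by inclusion--exclusion, following \cite[Theorem~4.1]{FRE2018}. For $n \le N$ write $X_{n} := \pi(n+H) - \pi(n)$. The indicator of $X_{n} = m$ has the identity
\begin{equation*}
\mathbf{1}[X_{n} = m] = \sum_{k \ge m} (-1)^{k-m}\binom{k}{m}\binom{X_{n}}{k}.
\end{equation*}
Truncating this at $k \le K$ produces an error whose sign is controlled, by the Bonferroni inequalities. Summing over $n$, we have
\begin{equation*}
\sum_{n\le N}\binom{X_{n}}{k} = \sum_{\substack{\mathcal{H}\subset\{1,\ldots,H\}\\ |\mathcal{H}|=k}} \left|\{n \le N : n+h \text{ prime for all } h\in\mathcal{H}\}\right|.
\end{equation*}
Hypothesis~\ref{con:uniform-hardy-littlewood-hypothesis} applies for $k \le \log H$. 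The total error it contributes is $\binom{H}{k}N^{1/2+\epsilon}$, which is $N^{1/2+2\epsilon}$ for such $k$ and hence negligible. The case $k=1$ is the prime number theorem, and $k=0$ is trivial. The main term becomes
\begin{equation*}
\frac{1}{N}\int_{2}^{N}\frac{dt}{(\log t)^{k}} \sum_{|\mathcal{H}|=k}\mathfrak{S}(\mathcal{H}).
\end{equation*}
The integral equals $N(\log N)^{-k}\bigl(1 + k/\log N + O(k^{2}/\log^{2}N)\bigr)$, which to the needed order is $N(\log N - 1)^{-k}$. This explains the normalization $\lambda = H/(\log N - 1)$.

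Next we use the Montgomery--Soundararajan expansion \cite[equation~(17)]{MS2004} of the singular-series average. With $\mathfrak{S}_{0}(\mathcal{H}) = \sum_{\mathcal{J}\subset\mathcal{H}}(-1)^{|\mathcal{H}\setminus\mathcal{J}|}\mathfrak{S}(\mathcal{J})$, it gives
\begin{equation*}
\sum_{|\mathcal{H}|=k}\mathfrak{S}_{0}(\mathcal{H}) = \mu_{k}(-H\log H + AH)^{k/2} + O_k\bigl(H^{k/2 - 1/(7k)+\epsilon}\bigr),
\end{equation*}
where $A = 2 - \gamma - \log 2\pi$. Inverting the relation between $\mathfrak{S}$ and $\mathfrak{S}_{0}$ yields
\begin{equation*}
\sum_{|\mathcal{H}|=k}\mathfrak{S}(\mathcal{H}) = \binom{H}{k}\Bigl(1 - \binom{k}{2}\frac{\log H - A}{H} + \cdots\Bigr),
\end{equation*}
where the dominant correction comes from the pair term $\sum_{h\ne h'}(\mathfrak{S}(\{h,h'\}) - 1) \approx -H\log H + AH$. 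Combining this with $\binom{H}{k}(\log N - 1)^{-k} = \frac{\lambda^{k}}{k!}\bigl(1 - \binom{k}{2}/H + O(k^{4}/H^{2})\bigr)$, the $k$-th inclusion--exclusion term equals
\begin{equation*}
\frac{\lambda^{k}}{k!}\Bigl(1 - \binom{k}{2}\frac{\log H + \log 2\pi + \gamma - 1}{H}\Bigr),
\end{equation*}
up to an error of lower order. The constant works out because $-A + 1 = \log 2\pi + \gamma - 1$.

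Summing over $k$ is then routine. We have $\sum_{k}(-1)^{k-m}\binom{k}{m}\lambda^{k}/k! = e^{-\lambda}\lambda^{m}/m!$. Also,
\begin{equation*}
\sum_{k}(-1)^{k-m}\binom{k}{m}\binom{k}{2}\frac{\lambda^{k}}{k!} = \frac{1}{2}\lambda^{2}\frac{d^{2}}{d\lambda^{2}}\Bigl(e^{-\lambda}\frac{\lambda^{m}}{m!}\Bigr) = \frac{e^{-\lambda}\lambda^{m}}{2\,m!}\bigl((m-\lambda)^{2} - m\bigr).
\end{equation*}
Together these give exactly the displayed formula.

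The main obstacle is uniformity in $k$. The Montgomery--Soundararajan bound is stated only for fixed $k$. Yet to make the Bonferroni tail $\ll \lambda^{K}/K!$ equal to $o(1/H)$ we need $K \to \infty$, roughly $K \asymp \log H/\log\log H$. The contributions of order $\binom{k}{j}$ from higher $\mathfrak{S}_{0}$ terms ($j \ge 3$, with total size $O(H^{j/2}(\log H)^{j/2})$ relative to $H^{j}$) must then be bounded uniformly, including the error terms. My first attempt would be to avoid full uniformity by truncating at a fixed large $K$. A fixed $K$ leaves a tail $\ll_{K}\lambda^{K}$, which is not $o(1/H)$, so instead I would use the finite-sieve device mentioned in the abstract. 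Specifically, I would majorize the tail $\sum_{k>K}$ of the alternating sum by a Selberg-sieve upper bound for the number of $n$ with many primes in $(n,n+H]$. This controls $\sum_{n}\binom{X_{n}}{K}$ using only the crude bound $\mathfrak{S}(\mathcal{H}) \ll (C\log\log H)^{k}$ and Hypothesis~\ref{con:uniform-hardy-littlewood-hypothesis}. With that bound, the tail is $\ll (C\lambda\log\log H)^{K}/K! = o(1/H)$ once $K$ is a suitable multiple of $\log H/\log\log H$, while $K \le \log H$ keeps us inside the hypothesis. The remaining task is to verify that the $\mathfrak{S}_{0}$ expansion holds with error $o(1/H)$ uniformly for $k \le K$. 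For this I would use the Montgomery--Soundararajan method, which yields errors of the form $C^{k}k^{k}H^{k/2-c}$, and these suffice in that range.
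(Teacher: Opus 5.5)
Most of your outline is sound. The tuple-count errors total $N^{1/2+o(1)}$, since $\binom{H}{k}\le H^{\log H}=N^{o(1)}$. Replacing $J(k)$ by $(\log N-1)^{-k}$ costs a relative $O(k^{2}/(\log N)^{2})$ uniformly for $k\le K$. Your constant $1-A=\log(2\pi)+\gamma-1$ matches $\eta(H)$, and the Charlier identity is correct. The Bonferroni tail is also fine and needs no sieve over prime-rich $n$: the two truncations differ by $\binom{K}{m}T(K)$, which the hypothesis and a crude uniform bound for $S(K;H)$ make $o(1/H)$ once $K\ge(1+\delta)\log H/\log\log H$. Your bound works here, as does the paper's $S(k;H)\le(C_{0}H)^{k}/k!$.

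\textbf{The gap.} The problem is the step you leave to your last sentence, which is the whole difficulty. To evaluate $\sum_{k\le K}(-1)^{k-m}\binom{k}{m}J(k)S(k)$ to within $o(1/H)$, you must expand
\[
S(k)=\sum_{j}\binom{H-j}{k-j}\sum_{|\mathcal{J}|=j}\mathfrak{S}_{0}(\mathcal{J}),
\]
with control of the centered sums uniformly for every order $3\le j\le K\asymp\log H/\log\log H$. The estimate you quote from \cite{MS2004} is proved for each fixed $k$, with an untracked $O_{k}$ constant, including the constant in the $H^{\epsilon}$ loss. Your claim that their method gives $C^{k}k^{k}H^{k/2-c}$ uniformly is asserted, not derived. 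Establishing it would require redoing their analysis with explicit dependence on $k$, and nothing available supplies this.

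Without it, the only uniform input is the trivial bound $|\mathfrak{S}_{0}(\mathcal{J})|\le 2^{j}\max\mathfrak{S}$. Since $\binom{H-j}{k-j}\binom{H}{j}=\binom{k}{j}\binom{H}{k}$, this bounds the $j$th contribution only by $\binom{k}{j}(2C\log\log H)^{j}\binom{H}{k}$, with no saving in $H$ at all. So your argument controls the tail but not the main alternating sum.

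\textbf{The paper's idea.} The paper never expands $S(k)$ order by order for growing $k$. Take $a$ uniform modulo $q=\prod_{p\le H^{3}}p$, let $Y$ count the $h\in[H]$ with $\gcd(a+h,q)=1$, and put $\rho=\phi(q)/q$ and $\theta=u/(H\rho)$. Then there is an exact identity
\[
\sum_{k}S_{y}(k;H)(u/H)^{k}(z-1)^{k}=\mathbb{E}(1+\theta(z-1))^{Y},
\]
so the whole generating function of the signed sums is a single expectation. A Selberg upper-bound sieve gives $Y\le C_{0}H\rho$ for every $a$. Writing $Y=H\rho+D$, the second-order expansion of $\mathbb{E}e^{D\log(1+\theta(z-1))}$ therefore has a pointwise cubic remainder. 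Only $\mathbb{E}D^{2}$ (Goldston's pair average) and $\mathbb{E}D^{4}$ (the Montgomery--Soundararajan moment at the single fixed order $4$) are needed.

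This gives the expansion with error $O(H^{-3/2+\epsilon})$ uniformly on $|z|\le 2$. Cauchy's formula then yields all the truncated alternating sums at once (Theorem~\ref{thm:weighted-singular-series-asymptotic}). The same sieve bound gives $S(k;H)\le(C_{0}H)^{k}/k!$ for the part beyond the truncation. The cancellation across $k$ is thus preserved rather than lost term by term. To complete your proof you would need either a genuinely uniform-in-$k$ version of the Montgomery--Soundararajan estimate or a device of this kind.
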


The hypothesis above is chosen for simplicity. Section~\ref{sec:prime-tuples-to-prime-counts} replaces it by weaker hypotheses on averages of prime-tuple counts and allows the left endpoints to range over $(M, M + N]$.

Related biases occur in the residue classes of consecutive terms of arithmetic sequences. Lemke Oliver and Soundararajan \cite{LS2016} used the Hardy--Littlewood prime $k$-tuples conjecture to predict lower-order terms favoring certain residue patterns among consecutive primes. David, Devin, Nam and Schlitt \cite{DDNS2022} developed an analogous explanation for consecutive sums of two squares, using conjectures for correlations of that sequence and estimates for averages of the associated singular series. Our use of the term ``bias'' is in the same spirit: the leading Poisson law is accompanied by an arithmetic correction favoring certain prime counts over others. Here the statistic is the number of primes in a short interval, rather than the residue pattern of consecutive terms.

\subsection{Beyond the Poisson approximation}
\label{subsec:beyond-poisson}

The correction in Theorem~\ref{thm:intro-corrected-poisson} differs even from the correction obtained by retaining the binomial distribution in the independent model. To see this, consider $H$ independent coin flips, each with success probability $\lambda/H$, so that the expected number of successes is $\lambda$. The probability of exactly $m$ successes is
\begin{equation*}
b_{m} := \binom{H}{m}\left(\frac{\lambda}{H}\right)^{m}\left(1 - \frac{\lambda}{H}\right)^{H - m}.
\end{equation*}
For fixed $m$ and $\lambda$ in a compact subset of $(0,\infty)$, expanding the finite product in the binomial coefficient and the logarithm of the last factor gives
\begin{equation*}
\log\left(\frac{b_{m}}{e^{-\lambda}\lambda^{m}/m!}\right) = -\frac{m(m - 1)}{2H} + \frac{m\lambda}{H} - \frac{\lambda^{2}}{2H} + O(H^{-2}).
\end{equation*}
Thus, writing
\begin{equation*}
C_{2}(m; \lambda) := (m - \lambda)^{2} - m,
\end{equation*}
the second Charlier polynomial in this normalization, we obtain
\begin{equation*}
b_{m} = \frac{e^{-\lambda}\lambda^{m}}{m!}\left\{1 - \frac{C_{2}(m; \lambda)}{2H} + O(H^{-2})\right\}.
\end{equation*}
The theorem has the same polynomial, but replaces its coefficient $1/(2H)$ by
\begin{equation*}
\frac{\log H + \log(2\pi) + \gamma - 1}{2H}.
\end{equation*}
The arithmetic correction is therefore larger by a factor asymptotic to $\log H$, wherever $C_{2}(m; \lambda)$ stays away from zero.

Here $\lambda = H/(\log N - 1)$ matches the averaged density to the precision required. It is not the local mean at every starting point: Cram\'er's model assigns probability $1/\log n$ to the integer $n$. Averaging over this varying density changes the comparison only by $O(H^{-2})$ when $H \asymp \log N$, as the estimates in Section~\ref{sec:density-averaging} explain. It does not account for the correction of order $(\log H)/H$.

The distinction comes from arithmetic dependence. Independent trials assign the same probability to every set of $k$ successes, whereas the Hardy--Littlewood conjecture weights each set of shifts by its singular series. Gallagher's result that these weights have average order $1$ explains why independence gives the correct first-order prediction. The more precise singular-series average retains a lower-order contribution, which inclusion-exclusion turns into the correction above.

The sign also explains the bias: the correction increases the Poisson mass where $(m - \lambda)^{2} < m$ and decreases it where the reverse inequality holds. Thus counts near the mean are favored, with a stronger correction than the independent binomial model predicts.

Unconditionally, even the existence of infinitely many such intervals for every fixed $\lambda > 0$ and $m \ge 0$ required further progress. Building on Maynard's breakthrough on small gaps between primes \cite{MAY2015, MAY2016}, the author \cite[Theorem~1.1]{FRE2016} proved that
\begin{equation*}
\left|\{1 \le n \le N : \pi(n + \lambda\log n) - \pi(n) = m\}\right| \ge N^{1 - o(1)}.
\end{equation*}
This establishes the occurrence of every prescribed prime count on this scale, although it falls short of the positive limiting proportion predicted by the Poisson conjecture. For $\lambda$ sufficiently small in terms of $m$, Mastrostefano improved this lower bound to $\gg_{m,\lambda} N/\log N$ \cite[Theorem~1.2]{MAS2019a}, and subsequently to $\gg_{m,\lambda} N$ \cite[Theorem~1.1]{MAS2019b}. Thus a positive proportion of these intervals contain exactly $m$ primes in this restricted range of parameters. For sufficiently large $m$, Sono \cite[Corollary~1.2]{SON2026} gives an explicit quantitative refinement of the positive-proportion bound, again with $\lambda$ restricted in terms of $m$. Gallagher also obtained an unconditional exponential upper bound for the upper tail: for positive constants $\nu \ge \lambda \ge 1$,
\begin{equation*}
\left|\{n \le N : \pi(n + \lambda\log N) - \pi(n) > \nu\}\right| \le (1 + o(1)) N e^{-c\nu/\lambda},
\end{equation*}
where $c > 0$ is an absolute constant \cite[Theorem~2]{GAL1976}.

\subsection{Organization of the paper}
\label{subsec:organization-of-paper}

Section~\ref{sec:notation} records our notation and conventions. Section~\ref{sec:back-of-the-envelope} sketches the argument with the errors suppressed, showing how inclusion--exclusion produces the Poisson main term and its arithmetic correction. Section~\ref{sec:setup-and-bookkeeping} then defines the quantities and residuals needed to make this calculation precise, and Section~\ref{sec:bonferroni-sandwich} gives the unconditional Bonferroni sandwich with all residuals retained.

The technical heart of the proof is Section~\ref{sec:singular-series-finite-sieve}. A finite sieve allows us to control the alternating singular-series sums as the truncation order grows, using precise estimates only for low moments. This also gives the required bounds for the singular-series and truncation residuals. Section~\ref{sec:density-averaging} uses Taylor expansion about the averaged density to replace the integrated approximation by an expression at a single effective mean. Section~\ref{sec:prime-tuples-to-prime-counts} identifies sufficient Hardy--Littlewood hypotheses, which supply the sole unproved input, and combines the estimates to prove the general conditional theorem and its introductory special case.

Section~\ref{sec:random-model-arithmetic-correction} explains the arithmetic correction through a refinement of Cram\'er's random model. Appendix~\ref{sec:numerical-comparisons} compares the approximations numerically with the Poisson and binomial predictions. Appendix~\ref{app:gauss-to-cramer} gives a historical perspective, beginning with Gauss's prime-count tables, and Appendix~\ref{app:ai-provenance} records the provenance of the work, the use of AI and the verification of the arguments.

\subsection*{Acknowledgments}
\label{subsec:acknowledgments}

I thank Andrew Granville for his encouragement and for a calculation he sent me in June 2023, which clarified the treatment of the logarithmic integral and corrected the coefficient of the secondary term.

\section{Notation}
\label{sec:notation}

The set of primes is denoted by $\mathcal{P}$, its indicator by $\mathbf{1}_{\mathcal{P}}$, and the number of primes less than or equal to $x$ by $\pi(x)$. Sums and products indexed by $p$ run over primes. Euler's totient function is denoted by $\phi$, and we write $\gcd(a, b)$ for the greatest common divisor of integers $a$ and $b$.

For a finite set $S$, we write $|S|$ for its cardinality, and put $[H] := \{1, \ldots, H\}$ for a positive integer $H$. The set $\mathcal{H} \bmod p$ consists of the residue classes modulo $p$ represented in $\mathcal{H}$. Empty sums and products have values $0$ and $1$, respectively. Binomial coefficients with nonnegative integer arguments are taken to be zero when the lower argument is larger than the upper argument.

The Euler--Mascheroni constant is denoted by $\gamma = 0.577215\ldots$.

The interval length $H$, the lower endpoint $M$ of the averaging range, and the length $N$ of that range are integers satisfying $H \ge 1$, $M \ge 0$ and $N \ge 2$. For an integer $n \ge 0$, put
\begin{equation*}
X(n; H) := \pi(n + H) - \pi(n).
\end{equation*}
The proportion of intervals containing exactly $m$ primes is
\begin{equation*}
P(m; H, M, N) := \frac{1}{N}\left|\{M < n \le M + N : X(n; H) = m\}\right|,
\end{equation*}
where $m \ge 0$ is an integer and $n$ ranges over integers. Whether $m$ is fixed or allowed to vary is specified in each result.

The singular series of a finite set $\mathcal{H}$ of integers is
\begin{equation*}
\mathfrak{S}(\mathcal{H}) := \prod_{p}\left(1 - \frac{|\mathcal{H} \bmod p|}{p}\right)\left(1 - \frac{1}{p}\right)^{-|\mathcal{H}|}.
\end{equation*}
The set $\mathcal{H}$ is admissible if $|\mathcal{H} \bmod p| < p$ for every prime $p$, equivalently if $\mathfrak{S}(\mathcal{H}) > 0$, and inadmissible otherwise. In particular, $\mathfrak{S}(\varnothing) = \mathfrak{S}(\{h\}) = 1$ for every integer $h$.

For an integer $k \ge 0$, the tuple average $T(k; H, M, N)$ is the average number of ways to choose $k$ primes from $(n, n + H]$, over the integers $M < n \le M + N$. The corresponding singular-series sum $S(k; H)$ is over the $k$-element subsets of $[H]$, each counted once. Thus
\begin{equation*}
T(k; H, M, N) := \frac{1}{N}\sum_{n \, = \, 1}^{N}\binom{X(M + n; H)}{k}, \qquad S(k; H) := \sum_{\substack{\mathcal{H} \, \subseteq \, [H] \\ |\mathcal{H}| \, = \, k}}\mathfrak{S}(\mathcal{H}).
\end{equation*}
The logarithmic density factor is
\begin{equation*}
J(k; M, N) := \frac{1}{N}\int_{\max\{2, M\}}^{M + N}\frac{dt}{(\log t)^{k}}.
\end{equation*}
We write $G(k; H)$ for the two-term approximation to $S(k; H)$:
\begin{equation*}
G(k; H) := \frac{H^{k}}{k!}\left(1 - \binom{k}{2}\eta(H)\right), \qquad \eta(H) := \frac{\log H + \log(2\pi) + \gamma - 1}{H}.
\end{equation*}

We reserve $\mathbb{P}$ for probability and write $\mathbb{E}Z$ for the expectation of a random variable $Z$. The underlying probability space is specified where it is introduced. The Poisson mass at $m$ with mean $u \ge 0$ is $p_{m}(u) := e^{-u}u^{m}/m!$, with values at $u = 0$ understood by continuity; we put $p_{m}(u) = 0$ for negative integers $m$. The corrected Poisson approximation is
\begin{equation*}
Q(m; u, H) := p_{m}(u)\left\{1 - \frac{\eta(H)}{2}\left((m - u)^{2} - m\right)\right\}.
\end{equation*}
Its average over the logarithmic density is denoted by $F(m; H, M, N)$, and the averaged density parameter by $\mu = \mu(H, M, N)$:
\begin{equation*}
F(m; H, M, N) := \frac{1}{N}\int_{\max\{2, M\}}^{M + N}Q\left(m; \frac{H}{\log t}, H\right)dt, \qquad \mu := HJ(1; M, N).
\end{equation*}
The exact mean prime count is $T(1; H, M, N)$; $\mu$ is its logarithmic-integral approximation. 

We use $\lambda$ for a Poisson parameter, with its precise choice specified in each occurrence. The indexed coefficients $\lambda_{d}$ in the proof of Lemma~\ref{lem:uniform-survivor-bound} are local sieve weights and are unrelated to this parameter.

The signed residuals $\Delta_{\mathrm{HL}}$, $\Delta_{\mathrm{MS}}$, $\Delta_{\mathrm{tr}}$ and $\Delta_{\mathrm{av}}$ record, respectively, the Hardy--Littlewood approximation, the singular-series approximation, truncation of the exponential series, and replacement of $F$ by $Q$ evaluated at $\mu$. Their precise definitions are given in Section~\ref{sec:setup-and-bookkeeping}. The corresponding weighted or integrated residuals are denoted by $R_{\mathrm{HL}}$, $R_{\mathrm{MS}}$ and $R_{\mathrm{tr}}$, and their sum by $\mathcal{R}$. Dependence on $H$, $M$ and $N$ is sometimes suppressed, as indicated in the text.

For functions $f$ and $g$, with $g$ positive, we write $f = O(g)$, or equivalently $f \ll g$, if $|f| \le Cg$ throughout the range under consideration for some constant $C > 0$. The notation $g \gg f$ has the same meaning. For positive functions, $f \asymp g$ means that $f \ll g$ and $g \ll f$. Subscripts indicate permitted dependence of the implied constant. Any dependence on additional fixed parameters is stated in the relevant result. Uniformity in a parameter means that the implied constant is independent of that parameter throughout the stated range.

We write $f = o(g)$ if $f/g \to 0$, and $f \sim g$ if $f/g \to 1$. The limiting parameter, usually $N$ or $H$, and the allowed dependence of the other parameters are specified in the text. A uniform $o(g)$ bound means that the supremum of $|f/g|$ over the stated parameter ranges tends to zero.

\section{Back of the envelope}
\label{sec:back-of-the-envelope}

We first sketch the argument assuming $M = 0$, suppressing some parameters and temporarily ignoring all errors. The notation in this section is provisional and will be superseded by the formal definitions in Section~\ref{sec:setup-and-bookkeeping}. We use $\approx$ to indicate a proposed approximation whose error remains to be considered.

Let $H \ge 1$, $N \ge 2$ and $0 \le m \le H$ be integers. Write $\pi(x)$ for the number of primes less than or equal to $x$, and put
\begin{equation*}
P(r) := \frac{1}{N}\left|\{n \le N : \pi(n + H) - \pi(n) = r\}\right| \qquad (r \ge 0).
\end{equation*}
Here $n$ and $r$ are integers, and vertical bars denote cardinality. We wish to estimate $P(m)$. 

Hardy--Littlewood estimates concern prescribed prime tuples, so our first task is to express $P(m)$ in terms of counts of such tuples. Write $[H] = \{1,\ldots,H\}$ and let $\mathbf{1}_{\mathcal{P}}$ be the indicator of the primes $\mathcal{P}$. For $k \ge 0$, put
\begin{equation*}
T(k) := \frac{1}{N}\sum_{\substack{\mathcal{H} \, \subseteq \, [H] \\ |\mathcal{H}| \, = \, k}}\sum_{n \, = \, 1}^{N}\prod_{h \, \in \, \mathcal{H}}\mathbf{1}_{\mathcal{P}}(n + h).
\end{equation*}
An interval containing $r$ primes contributes once for each choice of $k$ of them. Interchanging the sums and grouping the intervals by their prime counts therefore gives
\begin{equation*}
T(k) = \frac{1}{N}\sum_{n \, = \, 1}^{N}\binom{\pi(n + H) - \pi(n)}{k} = \sum_{r \, = \, 0}^{H}\binom{r}{k}P(r).
\end{equation*}
We use the usual conventions for empty products and for binomial coefficients with nonnegative integer arguments; in particular, $T(0) = 1$ and $T(k) = 0$ for $k > H$.

Taking $k = m$ counts the intervals we want, but also counts intervals with more than $m$ primes:
\begin{equation*}
T(m) = P(m) + (m + 1)P(m + 1) + \sum_{r \, = \, m + 2}^{H}\binom{r}{m}P(r) \ge P(m).
\end{equation*}
To remove the contribution from intervals with exactly $m + 1$ primes, subtract $(m + 1)T(m + 1)$. This over-corrects the contributions from intervals with still more primes:
\begin{equation*}
T(m) - (m + 1)T(m + 1) = P(m) - \sum_{r \, = \, m + 2}^{H}\binom{r}{m}(r - m - 1)P(r) \le P(m).
\end{equation*}
Adding the next correction removes the contribution from intervals with exactly $m + 2$ primes and gives another overcount:
\begin{equation*}
T(m) - (m + 1)T(m + 1) + \binom{m + 2}{m}T(m + 2) \ge P(m).
\end{equation*}
Continuing in this way gives the exact inclusion--exclusion identity
\begin{equation*}
P(m) = \sum_{\ell \, = \, 0}^{H - m}(-1)^{\ell}\binom{m + \ell}{m}T(m + \ell).
\end{equation*}
For estimates, we stop earlier. If
\begin{equation*}
B(m, j) := \sum_{\ell \, = \, 0}^{j}(-1)^{\ell}\binom{m + \ell}{m}T(m + \ell),
\end{equation*}
then the Bonferroni inequalities give
\begin{equation*}
B(m, L + 1) \le P(m) \le B(m, L)
\end{equation*}
for every even integer $L \ge 0$. We shall choose $L$ to grow with $H$, so that both bounds can be approximated by the same expression to the required precision.

The definition of $T(k)$ shows where the Hardy--Littlewood conjecture enters. For each set of shifts $\mathcal{H}$, it predicts
\begin{equation*}
\sum_{n \, = \, 1}^{N}\prod_{h \, \in \, \mathcal{H}}\mathbf{1}_{\mathcal{P}}(n + h) \approx \mathfrak{S}(\mathcal{H})\int_{2}^{N}\frac{dt}{(\log t)^{k}}, \qquad k = |\mathcal{H}|,
\end{equation*}
where the singular series $\mathfrak{S}(\mathcal{H})$ accounts for congruence restrictions on the shifts. Summing these proposed main terms gives
\begin{equation*}
T(k) \approx J(k)S(k), \qquad J(k) := \frac{1}{N}\int_{2}^{N}\frac{dt}{(\log t)^{k}}, \qquad S(k) := \sum_{\substack{\mathcal{H} \, \subseteq \, [H] \\ |\mathcal{H}| \, = \, k}}\mathfrak{S}(\mathcal{H}).
\end{equation*}
The prime-tuple counts have thus led us to an average of the singular series. Montgomery and Soundararajan's estimate \cite[equation~(17)]{MS2004}, for fixed $k$ and after division by $k!$ to pass from ordered tuples to subsets, suggests the replacement
\begin{equation*}
S(k) \approx G(k), \qquad G(k) := \frac{H^{k}}{k!}\left(1 - \binom{k}{2}\eta(H)\right), \qquad \eta(H) := \frac{\log H + \log(2\pi) + \gamma - 1}{H},
\end{equation*}
where $\gamma$ is the Euler--Mascheroni constant. The fixed-$k$ estimate alone does not justify this substitution throughout our growing sums. We will need to control the combined contribution of the singular-series residuals.

Substituting $J(k)G(k)$ for $T(k)$ in $B(m,j)$ and moving the finite sum inside the integral gives
\begin{equation*}
B(m, j) \approx \frac{1}{N}\int_{2}^{N}\frac{(H/\log t)^{m}}{m!}\sum_{\ell \, = \, 0}^{j}\frac{(-H/\log t)^{\ell}}{\ell!}\left(1 - \binom{m + \ell}{2}\eta(H)\right)dt.
\end{equation*}
The factorials simplify because
\begin{equation*}
\binom{m + \ell}{m}\frac{H^{m + \ell}}{(m + \ell)!} = \frac{H^{m}}{m!}\frac{H^{\ell}}{\ell!}.
\end{equation*}
We now complete the finite sum inside the integral to an infinite series. With $u = H/\log t$, denote the resulting integrand by
\begin{equation*}
Q(m; u) := \frac{u^{m}}{m!}\sum_{\ell \, = \, 0}^{\infty}\frac{(-u)^{\ell}}{\ell!}\left(1 - \binom{m + \ell}{2}\eta(H)\right).
\end{equation*}
The leading term gives the series for $e^{-u}$. For the correction, expanding $(m + \ell)(m + \ell - 1)$ and shifting indices gives
\begin{equation*}
\sum_{\ell \, = \, 0}^{\infty}\binom{m + \ell}{2}\frac{(-u)^{\ell}}{\ell!} = \frac{e^{-u}}{2}\left(m(m - 1) - 2mu + u^{2}\right) = \frac{e^{-u}}{2}\left((m - u)^{2} - m\right).
\end{equation*}
Consequently,
\begin{equation*}
Q(m; u) = \frac{e^{-u}u^{m}}{m!}\left\{1 - \frac{\eta(H)}{2}\left((m - u)^{2} - m\right)\right\}.
\end{equation*}
The factor $e^{-u}u^{m}/m!$ is the probability that a Poisson random variable of mean $u$ takes the value $m$. The correction in braces comes from the second term in the singular-series approximation.

Both Bonferroni sums therefore lead to the same proposed main term, which we denote by $F(m)$:
\begin{equation*}
P(m) \approx F(m), \qquad F(m) := \frac{1}{N}\int_{2}^{N}Q\left(m; \frac{H}{\log t}\right)dt.
\end{equation*}
Finally, to obtain an expression at a single parameter, we compare this integral with $Q$ evaluated at the averaged logarithmic density:
\begin{equation*}
F(m) \approx Q(m; \mu), \qquad \mu := HJ(1) = \frac{1}{N}\int_{2}^{N}\frac{H}{\log t}dt.
\end{equation*}
This last approximation has its own error, since averaging a nonlinear function need not give its value at the averaged argument.

There are thus four contributions to keep track of: the Hardy--Littlewood residuals, the singular-series residuals, the tails introduced by completing the finite sums, and the difference between $F(m)$ and $Q(m;\mu)$. In the next section we define each residual exactly, retaining its sign. The Bonferroni sandwich proposition of Section \ref{sec:bonferroni-sandwich} then bounds $P(m) - Q(m;\mu)$ in terms of their weighted combinations, without requiring any estimates for them.

\section{Bookkeeping}
\label{sec:setup-and-bookkeeping}

For integers $H \ge 1$ and $n \ge 0$, denote the number of primes in the interval $(n, n + H]$ by
\begin{equation*}
X(n; H) := \pi(n + H) - \pi(n).
\end{equation*}
For integers $M \ge 0$, $N \ge 2$ and $m \ge 0$, define
\begin{equation*}
P(m; H, M, N) := \frac{1}{N}\left|\{M < n \le M + N,\ X(n; H) = m\}\right|.
\end{equation*}
Here $|S|$ denotes the cardinality of the finite set $S$. Thus $P(m; H, M, N)$ is the proportion of length-$H$ intervals with integer left endpoint in $(M, M + N]$ that contain exactly $m$ primes.

\subsection{Prime tuples and the Hardy--Littlewood residual}
\label{subsec:hardy-littlewood-residual}

The \emph{singular series}\footnote{Hardy and Littlewood introduced it as a series and then derived its product representation \cite[pp.~55--61]{HL1923}.} associated with a finite set of integers $\mathcal{H}$ is
\begin{equation*}
\mathfrak{S}(\mathcal{H}) := \prod_{p}\left(1 - \frac{|\mathcal{H} \bmod p|}{p}\right)\left(1 - \frac{1}{p}\right)^{-|\mathcal{H}|},
\end{equation*}
where the product is over all primes and
\begin{equation*}
\mathcal{H} \bmod p := \{h + p\mathbb{Z} : h \in \mathcal{H}\}
\end{equation*}
is the set of residue classes modulo $p$ represented in $\mathcal{H}$. If $|\mathcal{H}| = k$, then $|\mathcal{H} \bmod p| = k$ for all sufficiently large $p$, so the corresponding factor is $1 + O_{k}\left(1/p^2\right)$ and the product converges absolutely. If $\mathcal{H}$ is empty or a singleton, every factor is $1$; thus $\mathfrak{S}(\varnothing) = \mathfrak{S}(\{h\}) = 1$ for every integer $h$.

We call $\mathcal{H}$ \emph{admissible} if $|\mathcal{H} \bmod p| < p$ for every prime $p$, equivalently if $\mathfrak{S}(\mathcal{H}) > 0$, and \emph{inadmissible} otherwise. The empty set and every singleton set are admissible. If $\mathcal{H}$ is inadmissible, choose a prime $p$ for which $|\mathcal{H} \bmod p| = p$. Every translate consisting entirely of primes must contain $p$, so its translation parameter belongs to $\{p - h : h \in \mathcal{H}\}$. There are therefore at most $k$ such translates.

Write $\mathcal{P}$ for the set of primes and $\mathbf{1}_{\mathcal{P}}$ for its indicator. For a fixed finite set $\mathcal{H}$ of $k$ nonnegative integers, the prime $k$-tuples conjecture of Hardy and Littlewood asserts that
\begin{equation*}
\sum_{n \, = \, 1}^{N}\prod_{h \, \in \, \mathcal{H}}\mathbf{1}_{\mathcal{P}}(n + h) = \left(\mathfrak{S}(\mathcal{H}) + o(1)\right)\int_{2}^{N}\frac{dt}{(\log t)^{k}} \qquad (N \to \infty).
\end{equation*}
For inadmissible $\mathcal{H}$, this assertion is degenerate and holds trivially: the singular series vanishes and the sum is bounded. For admissible $\mathcal{H}$, the case $k = 0$ is immediate and the case $k = 1$ follows from the prime number theorem; the assertion remains conjectural for $k \ge 2$. Hardy and Littlewood deduced the admissible-case asymptotic from their Hypothesis~X \cite[Theorem~X~1]{HL1923}. 

Put $[H] := \{1, \ldots, H\}$. We take binomial coefficients with nonnegative integer arguments to be zero when the lower argument exceeds the upper argument. For every integer $k \ge 0$, define
\begin{equation*}
T(k; H, M, N) := \frac{1}{N}\sum_{n \, = \, 1}^{N}\binom{X(M + n; H)}{k}.
\end{equation*}
We can also express this as an average of prime-tuple counts:
\begin{equation*}
T(k; H, M, N) = \frac{1}{N}\sum_{\substack{\mathcal{H} \, \subseteq \, [H] \\ |\mathcal{H}| \, = \, k}}\sum_{n \, = \, 1}^{N}\prod_{h \, \in \, \mathcal{H}}\mathbf{1}_{\mathcal{P}}(M + n + h).
\end{equation*}
To see this, interchange the two sums on the right. For each fixed $n$, the product is $1$ precisely when every element of $M + n + \mathcal{H}$ is prime, and is $0$ otherwise. Summing over $\mathcal{H}$ therefore counts the ways to choose $k$ primes from the $X(M + n; H)$ primes in $(M + n, M + n + H]$, giving the binomial coefficient in the definition of $T$. This also holds for $k = 0$: the only choice is $\mathcal{H} = \varnothing$, and the empty product is $1$.

The corresponding singular-series sum and logarithmic integral are
\begin{equation*}
S(k; H) := \sum_{\substack{\mathcal{H} \, \subseteq \, [H] \\ |\mathcal{H}| \, = \, k}}\mathfrak{S}(\mathcal{H}), \qquad 
J(k; M, N) := \frac{1}{N}\int_{\max\{2, M\}}^{M + N}\frac{dt}{(\log t)^{k}}.
\end{equation*}
We define the signed Hardy--Littlewood residual by
\begin{equation}
\label{eq:hardy-littlewood-residual}
\Delta_{\mathrm{HL}}(k; H, M, N) := T(k; H, M, N) - J(k; M, N)S(k; H).
\end{equation}
Thus $\Delta_{\mathrm{HL}}$ is the sum of the individual prime-tuple residuals, divided by $N$:
\begin{equation*}
\Delta_{\mathrm{HL}}(k; H, M, N)
= \frac{1}{N}\sum_{\substack{\mathcal{H} \, \subseteq \, [H] \\ |\mathcal{H}| \, = \, k}}\left\{\sum_{n \, = \, 1}^{N}\prod_{h \, \in \, \mathcal{H}}\mathbf{1}_{\mathcal{P}}(M + n + h) 
- \mathfrak{S}(\mathcal{H})\int_{\max\{2, M\}}^{M + N}\frac{dt}{(\log t)^{k}}\right\}.
\end{equation*}
No estimate for this residual is assumed in this section; sufficient hypotheses for controlling its weighted sums are given in Subsection~\ref{subsec:sufficient-hardy-littlewood-hypotheses}. At $k = 0$, the definitions give
\begin{equation*}
\Delta_{\mathrm{HL}}(0; H, M, N) = \frac{\max\{2 - M, 0\}}{N}.
\end{equation*}

\subsection{The singular-series approximation}
\label{subsec:singular-series-approximation}

Let $\gamma$ be the Euler--Mascheroni constant, and put
\begin{equation*}
\eta(H) := \frac{\log H + \log(2\pi) + \gamma - 1}{H}.
\end{equation*}
Motivated by Montgomery and Soundararajan's singular-series average, after division by $k!$ to pass from ordered tuples to subsets, define the main term and the signed residual by
\begin{equation}
\label{eq:singular-series-residual}
G(k; H) := \frac{H^{k}}{k!}\left(1 - \binom{k}{2}\eta(H)\right), \qquad \Delta_{\mathrm{MS}}(k; H) := S(k; H) - G(k; H).
\end{equation}
In particular, $\Delta_{\mathrm{MS}}(0; H) = \Delta_{\mathrm{MS}}(1; H) = 0$. Gallagher's earlier estimate \cite[equation~(3)]{GAL1976}, translated from ordered tuples to our subset normalization, gives, for every fixed integer $k \ge 1$,
\begin{equation*}
S(k; H) \sim \frac{H^{k}}{k!} \qquad (H \to \infty).
\end{equation*}
His proof starts from the Euler product for the singular series, expands it into an absolutely convergent series, and averages the resulting congruence conditions by a lattice-point count using the Chinese remainder theorem; see in particular \cite[equations~(8)--(10)]{GAL1976}. Montgomery and Soundararajan's estimate \cite[equation~(17)]{MS2004} sharpens this first-order average by identifying the secondary term in \eqref{eq:singular-series-residual}. In our normalization, for every fixed integer $k \ge 2$ and every $\epsilon > 0$, it gives
\begin{equation*}
\Delta_{\mathrm{MS}}(k; H) \ll_{k,\epsilon} H^{k - 3/2 + \epsilon}.
\end{equation*}
Where Gallagher's proof is essentially an Euler-product and lattice-point averaging argument, Montgomery and Soundararajan pass to centered singular-series sums and relate them to moments of reduced residues. The case $k = 2$ uses Goldston's sharper pair average, which supplies the constant term in $\eta(H)$ \cite[equations~(47)--(48)]{MS2004}.

We state this estimate to explain the choice of main terms; the argument below will require control of weighted sums in which $k$ is allowed to grow with $H$. The definition of $\Delta_{\mathrm{MS}}$ applies to every integer $k \ge 0$, including $k > H$, when $T(k; H, M, N) = S(k; H) = 0$. Proposition~\ref{prop:inclusion-exclusion-sandwich} uses only the definition of this residual. The estimates needed for its weighted sums will be proved unconditionally in Section~\ref{sec:singular-series-finite-sieve}.

\subsection{The corrected Poisson expression and averaging}
\label{subsec:corrected-poisson-and-averaging}

For $u \ge 0$, put
\begin{equation}
\label{eq:corrected-poisson-expression}
Q(m; u, H) := \frac{e^{-u}u^{m}}{m!}\left\{1 - \frac{\eta(H)}{2}\left((m - u)^{2} - m\right)\right\},
\end{equation}
where the value at $u = 0$ is defined by continuity. Thus $Q(0; 0, H) = 1$ and $Q(m; 0, H) = 0$ for $m \ge 1$. The first factor is the probability that a Poisson random variable of mean $u$ takes the value $m$. As we shall see in Subsection~\ref{subsec:truncation-residuals}, the leading term in the singular-series approximation produces this Poisson factor under inclusion--exclusion, while the term involving $\eta(H)$ produces the correction in braces. We average the whole expression over the varying logarithmic density:
\begin{equation}
\label{eq:integrated-main-term}
F(m; H, M, N) := \frac{1}{N}\int_{\max\{2, M\}}^{M + N}Q\left(m; \frac{H}{\log t}, H\right)dt.
\end{equation}

To compare this average with $Q$ evaluated at a single parameter, put
\begin{equation*}
\mu = \mu(H, M, N) := H J(1; M, N) = \frac{1}{N}\int_{\max\{2, M\}}^{M + N}\frac{H}{\log t}dt
\end{equation*}
and define the signed averaging residual by
\begin{equation}
\label{eq:density-averaging-residual}
\Delta_{\mathrm{av}}(m; H, M, N) := F(m; H, M, N) - Q(m; \mu, H).
\end{equation}
The parameter $\mu$ retains the logarithmic integral exactly; no approximation involving $\log N$ has been made. The exact mean count is $T(1; H, M, N)$. Interchanging the sums and shifting the endpoints gives
\begin{equation*}
T(1; H, M, N) = \frac{1}{N}\sum_{h \, = \, 1}^{H}\left(\pi(M + N + h) - \pi(M + h)\right) = \frac{H}{N}\left(\pi(M + N) - \pi(M)\right) + O\left(\frac{H^{2}}{N}\right).
\end{equation*}
Thus estimates for the prime count in $(M, M + N]$ give corresponding estimates for the difference between the mean count and $\mu$. In particular, when $0 \le M \le C N$ for a fixed constant $C > 0$ and $H \asymp \log N$, the quantitative prime number theorem gives $T(1; H, M, N) - \mu = o(1/H)$ uniformly in $M$. This provides a natural range for the later asymptotic results, although the definitions and the Bonferroni sandwich require no such restriction.

\subsection{Truncation residuals}
\label{subsec:truncation-residuals}

In the proof of the Bonferroni sandwich proposition below, we shall encounter finite alternating sums. The following series representation of $Q$ will allow us to identify their main terms:
\begin{equation*}
Q(m; u, H) = \frac{u^{m}}{m!}\sum_{\ell \, = \, 0}^{\infty}\frac{(-u)^{\ell}}{\ell!}\left(1 - \binom{m + \ell}{2}\eta(H)\right).
\end{equation*}
Indeed, expanding $(m + \ell)(m + \ell - 1)$ and shifting indices in the exponential series gives
\begin{equation*}
\sum_{\ell \, = \, 0}^{\infty}\binom{m + \ell}{2}\frac{(-u)^{\ell}}{\ell!} = \frac{e^{-u}}{2}\left(m(m - 1) - 2mu + u^{2}\right) = \frac{e^{-u}}{2}\left((m - u)^{2} - m\right).
\end{equation*}

As we shall see, substituting the expression subtracted in the definition of $\Delta_{\mathrm{MS}}$ into those alternating sums produces truncations of the series representing $Q$. To record the difference between the sum through $\ell = j$ and $Q$, define the signed truncation residual
\begin{equation}
\label{eq:exponential-tail-residual}
\Delta_{\mathrm{tr}}(m, j; u, H) := -\frac{u^{m}}{m!}\sum_{\ell \, > \, j}\frac{(-u)^{\ell}}{\ell!}\left(1 - \binom{m + \ell}{2}\eta(H)\right)
\end{equation}
for integers $m, j \ge 0$. Thus
\begin{equation*}
\frac{u^{m}}{m!}\sum_{\ell \, = \, 0}^{j}\frac{(-u)^{\ell}}{\ell!}\left(1 - \binom{m + \ell}{2}\eta(H)\right) = Q(m; u, H) + \Delta_{\mathrm{tr}}(m, j; u, H).
\end{equation*}
The series converges absolutely and locally uniformly for $u \ge 0$, and $\Delta_{\mathrm{tr}}(m, j; 0, H) = 0$, with values at $u = 0$ understood by continuity.

\subsection{Combining the residuals}
\label{subsec:combining-the-residuals}

The Bonferroni sandwich will involve the following weighted sums of the Hardy--Littlewood and Montgomery--Soundararajan residuals, together with the averaged truncation residual:

Put
\begin{align*}
R_{\mathrm{HL}}(m, j; H, M, N) & := \sum_{\ell \, = \, 0}^{j}(-1)^{\ell}\binom{m + \ell}{m}\Delta_{\mathrm{HL}}(m + \ell; H, M, N), \\[1ex]
R_{\mathrm{MS}}(m, j; H, M, N) & := \sum_{\ell \, = \, 0}^{j}(-1)^{\ell}\binom{m + \ell}{m}J(m + \ell; M, N)\Delta_{\mathrm{MS}}(m + \ell; H), \\
\intertext{and}
R_{\mathrm{tr}}(m, j; H, M, N) & := \frac{1}{N}\int_{\max\{2, M\}}^{M + N}\Delta_{\mathrm{tr}}\left(m, j; \frac{H}{\log t}, H\right)dt.
\end{align*}
Their sum is denoted by
\begin{equation*}
\mathcal{R}(m, j; H, M, N) := R_{\mathrm{HL}}(m, j; H, M, N) + R_{\mathrm{MS}}(m, j; H, M, N) + R_{\mathrm{tr}}(m, j; H, M, N).
\end{equation*}

\section{A Bonferroni sandwich}
\label{sec:bonferroni-sandwich}

We now combine the identities of Section~\ref{sec:setup-and-bookkeeping} with the Bonferroni inequalities. Recall that $P(m; H, M, N)$ is the proportion of intervals containing exactly $m$ primes, and $F(m; H, M, N)$ is the integrated corrected Poisson expression. The combined residual $\mathcal{R}$ is the sum of the weighted Hardy--Littlewood, singular-series and truncation residuals defined in Subsection~\ref{subsec:combining-the-residuals}. The bounds below retain their signs and require no estimates for them.

\begin{proposition}[Bonferroni sandwich]
\label{prop:inclusion-exclusion-sandwich}
Let $H \ge 1$, $M \ge 0$, $N \ge 2$ and $m \ge 0$ be integers, and let $L \ge 0$ be an even integer. Throughout the statement and proof, we suppress the dependence on $H,M,N$ in the notation and write $\eta = \eta(H)$. Then
\begin{equation}
\label{eq:integrated-sandwich}
\mathcal{R}(m, L + 1) \le P(m) - F(m) \le \mathcal{R}(m, L).
\end{equation}
Equivalently,
\begin{equation}
\label{eq:mean-adjusted-sandwich}
\mathcal{R}(m, L + 1) \le P(m) - Q(m; \mu) - \Delta_{\mathrm{av}}(m) \le \mathcal{R}(m, L).
\end{equation}
In particular, if
\begin{equation*}
E(m, L) := \max_{j \, \in \, \{L, L + 1\}}\left|\Delta_{\mathrm{av}}(m) + \mathcal{R}(m, j)\right|,
\end{equation*}
then
\begin{equation}
\label{eq:sandwich-absolute-bound}
\left|P(m) - Q(m; \mu)\right| \le E(m, L).
\end{equation}
The separate residuals give the further bound
\begin{equation*}
E(m, L) \le |\Delta_{\mathrm{av}}(m)| + \max_{j \, \in \, \{L, L + 1\}}\left\{|R_{\mathrm{HL}}(m, j)| + |R_{\mathrm{MS}}(m, j)| + |R_{\mathrm{tr}}(m, j)|\right\}.
\end{equation*}
All these statements are unconditional.
\end{proposition}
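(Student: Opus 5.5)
The plan is to establish the Bonferroni inequalities for $B(m,j) := \sum_{\ell=0}^{j}(-1)^{\ell}\binom{m+\ell}{m}T(m+\ell)$, and then to show by exact algebra that $B(m,j) = F(m) + \mathcal{R}(m,j)$ for every $j \ge 0$.

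\textbf{Bonferroni inequalities.} Since $T(k) = \frac1N\sum_n \binom{X(M+n)}{k}$, it suffices to prove the pointwise statement: for each integer $r \ge 0$,
\begin{equation*}
c_j(r) := \sum_{\ell=0}^{j}(-1)^{\ell}\binom{m+\ell}{m}\binom{r}{m+\ell}
\end{equation*}
satisfies $c_L(r) \ge \mathbf{1}_{r=m} \ge c_{L+1}(r)$ for even $L$. Averaging over $n$ then gives $B(m,L+1) \le P(m) \le B(m,L)$.

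To prove the pointwise statement, use $\binom{m+\ell}{m}\binom{r}{m+\ell} = \binom{r}{m}\binom{r-m}{\ell}$, which reduces $c_j(r)$ to $\binom{r}{m}\sum_{\ell=0}^{j}(-1)^\ell\binom{s}{\ell}$ with $s = r - m$.
\begin{itemize}
\item If $r < m$, every term vanishes.
\item If $s = 0$, the sum equals $1$.
\item If $s \ge 1$, the standard identity $\sum_{\ell=0}^{j}(-1)^\ell\binom{s}{\ell} = (-1)^j\binom{s-1}{j}$ shows the sum has sign $(-1)^j$ (or is zero). So it is $\ge 0$ for $j = L$ and $\le 0$ for $j = L+1$.
\end{itemize}

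\textbf{Exact decomposition.} Insert $T(k) = \Delta_{\mathrm{HL}}(k) + J(k)S(k)$ and then $S(k) = G(k) + \Delta_{\mathrm{MS}}(k)$, valid for every $k \ge 0$ including $k > H$. This gives
\begin{equation*}
B(m,j) = R_{\mathrm{HL}}(m,j) + R_{\mathrm{MS}}(m,j) + \sum_{\ell=0}^{j}(-1)^\ell\binom{m+\ell}{m}J(m+\ell)G(m+\ell).
\end{equation*}
The last sum is finite, so it can be moved inside the integral defining $J$. The identity $\binom{m+\ell}{m}H^{m+\ell}/(m+\ell)! = \frac{H^m}{m!}\frac{H^\ell}{\ell!}$ then turns the integrand at $u = H/\log t$ into
\begin{equation*}
\frac{u^m}{m!}\sum_{\ell\le j}\frac{(-u)^\ell}{\ell!}\left(1-\binom{m+\ell}{2}\eta\right),
\end{equation*}
which by the defining identity of $\Delta_{\mathrm{tr}}$ equals $Q(m;u,H) + \Delta_{\mathrm{tr}}(m,j;u,H)$. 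Integrating gives $F(m) + R_{\mathrm{tr}}(m,j)$, and hence $B(m,j) = F(m) + \mathcal{R}(m,j)$. Combined with the Bonferroni inequalities, this yields \eqref{eq:integrated-sandwich}.

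\textbf{Remaining statements.} Substituting $F = Q(m;\mu) + \Delta_{\mathrm{av}}$ gives \eqref{eq:mean-adjusted-sandwich}. The bound \eqref{eq:sandwich-absolute-bound} follows because $P - Q(m;\mu)$ lies between $\Delta_{\mathrm{av}} + \mathcal{R}(m,L+1)$ and $\Delta_{\mathrm{av}} + \mathcal{R}(m,L)$. The final bound is the triangle inequality.

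The only point needing care is bookkeeping at the edges: when $k = 0$ (where $\Delta_{\mathrm{HL}}(0)$ is nonzero through the $\max\{2,M\}$ endpoint) and when $k > H$. The decomposition is a pure definitional identity, so it holds there as well. No step is a genuine obstacle; the Bonferroni sign argument is the one substantive ingredient.
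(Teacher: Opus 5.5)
Your proposal is correct and follows essentially the same route as the paper. The paper also reduces the Bonferroni inequalities to $\binom{m+\ell}{m}\binom{x}{m+\ell} = \binom{x}{m}\binom{x-m}{\ell}$ and the telescoped alternating-binomial identity. It then obtains $B(m,j) = F(m) + \mathcal{R}(m,j)$ by the same definitional substitution of $\Delta_{\mathrm{HL}}$, $\Delta_{\mathrm{MS}}$ and the truncated-series identity inside the finite sum, exactly as you propose.
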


\begin{proof}
For each integer $j \ge 0$, put
\begin{equation*}
B(m, j) := \sum_{\ell \, = \, 0}^{j}(-1)^{\ell}\binom{m + \ell}{m}T(m + \ell).
\end{equation*}
We first compare this partial sum with $P(m)$. For integers $x > m$,
\begin{equation*}
\sum_{\ell \, = \, 0}^{j}(-1)^{\ell}\binom{m + \ell}{m}\binom{x}{m + \ell} = \binom{x}{m}\sum_{\ell \, = \, 0}^{j}(-1)^{\ell}\binom{x - m}{\ell} = (-1)^{j}\binom{x}{m}\binom{x - m - 1}{j}.
\end{equation*}
The last equality follows by telescoping Pascal's identity. For $x = m$ the sum is $1$, and for $0 \le x < m$ it is $0$. Substituting $x = X(M + n)$ and averaging over $1 \le n \le N$ gives
\begin{equation}
\label{eq:bonferroni-exact-remainder}
B(m, j) - P(m) = \frac{(-1)^{j}}{N}\sum_{\substack{n \, = \, 1 \\ X(M + n) \, \ge \, m + j + 1}}^{N}\binom{X(M + n)}{m}\binom{X(M + n) - m - 1}{j}.
\end{equation}
Every summand is nonnegative. Since $L$ is even, \eqref{eq:bonferroni-exact-remainder} implies
\begin{equation}
\label{eq:bonferroni-sandwich}
B(m, L + 1) \le P(m) \le B(m, L).
\end{equation}

We now separate the proposed main terms from the residuals. By \eqref{eq:hardy-littlewood-residual} and \eqref{eq:singular-series-residual},
\begin{equation*}
T(k) = J(k)G(k) + J(k)\Delta_{\mathrm{MS}}(k) + \Delta_{\mathrm{HL}}(k).
\end{equation*}
Consequently,
\begin{equation}
\label{eq:partial-sum-decomposition}
B(m, j) - R_{\mathrm{HL}}(m, j) - R_{\mathrm{MS}}(m, j) = \sum_{\ell \, = \, 0}^{j}(-1)^{\ell}\binom{m + \ell}{m}J(m + \ell)G(m + \ell).
\end{equation}
The series representation of $Q$ in Section~\ref{sec:setup-and-bookkeeping} and the definition \eqref{eq:exponential-tail-residual} give
\begin{equation}
\label{eq:truncated-main-term-identity}
\frac{u^{m}}{m!}\sum_{\ell \, = \, 0}^{j}\frac{(-u)^{\ell}}{\ell!}\left(1 - \binom{m + \ell}{2}\eta\right) = Q(m; u) + \Delta_{\mathrm{tr}}(m, j; u).
\end{equation}
By the definition of $G(k)$ in \eqref{eq:singular-series-residual} and the definition of $J(k)$, the right side of \eqref{eq:partial-sum-decomposition} is the integral of the left side of \eqref{eq:truncated-main-term-identity}, with $u = H/\log t$, over $\max\{2, M\} \le t \le M + N$, divided by $N$. The sum is finite, so interchanging it with the integral requires no limiting argument. Using \eqref{eq:integrated-main-term} and the definition of $R_{\mathrm{tr}}$, we obtain
\begin{equation}
\label{eq:exact-partial-sum-expansion}
B(m, j) = F(m) + \mathcal{R}(m, j).
\end{equation}
Combining \eqref{eq:exact-partial-sum-expansion} with \eqref{eq:bonferroni-sandwich} proves \eqref{eq:integrated-sandwich}. Substitution of \eqref{eq:density-averaging-residual} gives \eqref{eq:mean-adjusted-sandwich}. Thus $P(m) - Q(m; \mu)$ lies between $\Delta_{\mathrm{av}}(m) + \mathcal{R}(m, L + 1)$ and $\Delta_{\mathrm{av}}(m) + \mathcal{R}(m, L)$, proving \eqref{eq:sandwich-absolute-bound}. The final bound follows by the triangle inequality.
\end{proof}

\section{The singular series through a finite sieve}
\label{sec:singular-series-finite-sieve}

We estimate the signed sums of singular series that occur in the Bonferroni sandwich (Proposition \ref{prop:inclusion-exclusion-sandwich}). The argument uses a pair average of Goldston \cite{GOL1990} and a fourth-moment estimate of Montgomery and Soundararajan \cite{MS2004}. An elementary upper bound for the number of survivors of a finite sieve supplies the uniformity needed for the remaining terms. In particular, we never apply a fixed-$k$ asymptotic with $k$ tending to infinity.

Recall from Section~\ref{sec:setup-and-bookkeeping} that
\begin{equation*}
S(k; H) = \sum_{\substack{\mathcal{H} \, \subseteq \, [H] \\ |\mathcal{H}| \, = \, k}}\mathfrak{S}(\mathcal{H}).
\end{equation*}
The sum is over subsets, so each set of shifts is counted once, and $S(k; H) = 0$ for $k > H$. The corrected Poisson expression is
\begin{equation*}
Q(m; u, H) = \frac{e^{-u}u^{m}}{m!}\left\{1 - \frac{\eta(H)}{2}\left((m - u)^{2} - m\right)\right\}, \qquad \eta(H) = \frac{\log H + \log(2\pi) + \gamma - 1}{H}.
\end{equation*}
We shall compare $Q$ with a finite alternating sum of the singular-series averages. For integers $H \ge 2$ and $m, j \ge 0$, and a real number $u \ge 0$, write
\begin{equation*}
\mathcal{A}(m, j; u, H) := \sum_{\ell \, = \, 0}^{j}(-1)^{\ell}\binom{m + \ell}{m}\left(\frac{u}{H}\right)^{m + \ell}S(m + \ell; H).
\end{equation*}
Values at $u = 0$ are understood by continuity. Substituting $u = H/\log t$ and averaging over the starting points will connect this estimate to the Bonferroni sandwich in Subsection~\ref{subsec:integrated-singular-series-residuals}.

\begin{theorem}
\label{thm:weighted-singular-series-asymptotic}
There is an absolute constant $C \ge 1$ such that, for every fixed $U > 0$ and $\epsilon > 0$, uniformly for $H \ge 2$, $0 \le u \le U$ and integers $m, j \ge 0$,
\begin{equation}
\label{eq:weighted-singular-series-asymptotic}
\mathcal{A}(m, j; u, H) = Q(m; u, H) + O_{U, \epsilon}\left(2^{-m}H^{-3/2 + \epsilon} + \frac{(Cu)^{m + j + 1}}{m!(j + 1)!}\right).
\end{equation}
The implied constant is independent of $m$ and $j$.
\end{theorem}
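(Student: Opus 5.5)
The plan is to work with the full alternating sum over all subsets, separate off the truncation tail, and evaluate the full sum through the Montgomery--Soundararajan centered expansion of the singular series. Write $x := u/H$. Expanding each $S(m+\ell;H)$ as a sum over subsets and regrouping according to which $m$ shifts are ``chosen'' gives the identity
\begin{equation*}
\mathcal{A}(m, \infty; u, H) = \sum_{\substack{\mathcal{A} \, \subseteq \, [H] \\ |\mathcal{A}| \, = \, m}} x^{m}\sum_{\mathcal{B} \, \subseteq \, [H] \setminus \mathcal{A}}(-x)^{|\mathcal{B}|}\,\mathfrak{S}(\mathcal{A} \cup \mathcal{B}).
\end{equation*}
Here $\mathcal{A}(m,\infty;u,H)$ is a finite sum, since $S(k;H)=0$ for $k>H$. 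I would then bound the difference $\mathcal{A}(m,j)-\mathcal{A}(m,\infty)$ termwise. For that I need a uniform upper bound
\begin{equation*}
S(k;H) \le \frac{(C_{0}H)^{k}}{k!} \qquad \text{for all } k \le H,
\end{equation*}
with $C_{0}$ absolute. This is where the finite sieve enters. Write $\mathfrak{S}(\mathcal{H})$ as a limit over $z$ of the density of $n$ with $n+h$ free of prime factors $\le z$ for all $h\in\mathcal{H}$, divided by $V(z)^{k}$. Summing over $\mathcal{H}$ converts $S(k;H)$ into an average of $\binom{Y}{k}$, where $Y$ counts survivors in a window of length $H$. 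Then an elementary Selberg-type upper bound for survivors (this is presumably Lemma~\ref{lem:uniform-survivor-bound}, with weights $\lambda_d$) controls these binomial moments uniformly in $k$. With this bound, the tail $\sum_{\ell>j}\binom{m+\ell}{m}x^{m+\ell}S(m+\ell;H)$ is at most
\begin{equation*}
\sum_{\ell > j}\frac{(C_{0}u)^{m+\ell}}{m!\,\ell!} \ll_{U} \frac{(Cu)^{m+j+1}}{m!\,(j+1)!}
\end{equation*}
for a suitable absolute $C$. This accounts for the second error term in \eqref{eq:weighted-singular-series-asymptotic}.

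For the full sum, I would insert the centered expansion $\mathfrak{S}(\mathcal{H})=\sum_{\mathcal{J}\subseteq\mathcal{H}}\mathfrak{S}_{0}(\mathcal{J})$, where $\mathfrak{S}_0(\varnothing)=1$ and $\mathfrak{S}_0(\mathcal J)=0$ for $|\mathcal J|=1$. Interchanging sums, each fixed $\mathcal{J}$ with $|\mathcal{J}|=r$ contributes $\mathfrak{S}_{0}(\mathcal{J})$ times an explicit binomial expression in $x$. This expression is the coefficient arising from the count of ways to complete $\mathcal{J}$, namely $\sum_{i}\binom{r}{i}x^{m}\dots(1-x)^{H-m-r+i}$-type terms. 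In that expression the $(1-x)^{H}$ factors produce $e^{-u}$ up to $O(H^{-1})$ with explicit $1/H$ corrections. The terms are then handled by the size of $r$.
\begin{itemize}
\item The term $r=0$ gives the binomial mass $b_{m}$, which equals $p_{m}(u)\{1-C_{2}(m;u)/(2H)\}+O(\cdot\,H^{-2})$ as computed in Subsection~\ref{subsec:beyond-poisson}.
\item The term $r=2$ uses Goldston's pair average $\sum_{|\mathcal{J}|=2}\mathfrak{S}_{0}(\mathcal{J})=-\tfrac12 H\log H+\tfrac12(1-\gamma-\log 2\pi)H+O(H^{1/2+\epsilon})$. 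Weighted by $x^{2}$ and the appropriate second difference, this supplies the remaining part of $\eta(H)C_{2}(m;u)/2$. Its sum with the $r=0$ term is exactly $Q(m;u,H)$ up to $O(H^{-3/2+\epsilon})$.
\item For $r=3,4$, the Montgomery--Soundararajan bounds $\sum_{|\mathcal{J}|=r}\mathfrak{S}_{0}(\mathcal{J})\ll H^{r/2+\epsilon}$ give $O(H^{-3/2+\epsilon})$ after the weight $x^{r}$. The fourth-moment estimate is used precisely here.
\end{itemize}
Throughout, the factor $2^{-m}$ should come from the weights: each $\mathcal{J}$-coefficient is bounded by $\ll u^{m}/m!\cdot(\text{const})^{r}$, and $u^{m}/m!\ll_{U}2^{-m}$.

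The main obstacle is the range $r\ge5$, together with uniformity in $m$. There the Montgomery--Soundararajan estimates are only available for fixed $r$, and the problem statement forbids using a fixed-$k$ asymptotic with $k\to\infty$. My first attempt would avoid the centered expansion beyond $r=4$. Instead, I would return to the sieve representation of the inner alternating sum over $\mathcal{B}$, which expresses it as an average of the product $\prod_{h\notin\mathcal{A}}(1-x\,\mathbf{1}_{\text{survivor}}(n+h)/V(z)')$. I would then Taylor-expand around its mean only to fourth order. The fifth-order remainder is controlled by the fifth binomial moment of the survivor count, centered at its mean, which is bounded through the uniform survivor lemma. This should give $O(x^{5/2}\cdot H^{5/2+\epsilon}\cdot H^{-5/2}\cdots)$, namely $O(H^{-3/2+\epsilon})$ after normalization. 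It also produces the $2^{-m}$ factor from the $m$ chosen positions. If this remainder estimate fails to be uniform, the fallback is to cap the expansion using the Bonferroni-type positivity already exploited in Proposition~\ref{prop:inclusion-exclusion-sandwich}, applied to the survivor count in place of the prime count.
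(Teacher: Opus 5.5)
\textbf{Where the proposal works.} Your tail estimate is the paper's: Lemma~\ref{lem:uniform-survivor-bound} gives $S(k;H)\le S_y(k;H)\le (C_0H)^k/k!$, and summing $(C_0u)^{m+\ell}/(m!\,\ell!)$ over $\ell>j$ yields the second error term. Work at a fixed level such as $y=H^3$, not a level tending to infinity. For $p>y\ge H$ the local factors are at most $1$, so $S\le S_y$, whereas the pointwise bound $Y\le C_0HV(z)$ fails once $z$ is very large.

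\textbf{The gap.} It lies in evaluating the complete sum $\mathcal{A}(m,\infty;u,H)$. The centered expansion gives the exact identity
\[
\sum_{k}S(k;H)\Bigl(\frac{u}{H}\Bigr)^{k}(z-1)^{k}=\sum_{r=0}^{H}R_{r}\Bigl(\frac{u}{H}\Bigr)^{r}(z-1)^{r}\Bigl(1+\frac{u}{H}(z-1)\Bigr)^{H-r},\qquad R_{r}:=\sum_{|\mathcal{J}|=r}\mathfrak{S}_{0}(\mathcal{J}).
\]
Nothing in your argument bounds the part with $r\ge5$. The Montgomery--Soundararajan estimates have $r$-dependent constants and cannot be summed up to $r=H$.

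\textbf{Why your remedy does not close it.} Summing your sieve expression for the inner sum over the $m$-sets gives $\mathbb{E}\bigl[\binom{Y}{m}\theta^{m}(1-\theta)^{Y-m}\bigr]$ with $\theta=u/(H\rho)$. If you Taylor-expand only $(1-\theta)^{Y-m}$ about $Y=b$, the main terms are $\mathbb{E}\bigl[\binom{Y}{m}(Y-b)^{i}\bigr]$. These involve moments of $Y$ of order $m+i$, which is exactly the high-order information you wanted to avoid, and uniformity in $m$ is lost.

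Nor is it clear how to graft this onto your $r\le4$ centered terms. The $r\ge5$ part of the centered expansion is not a Taylor remainder in $Y-b$, since each $R_r$ contains lower-degree terms in $Y-b$.

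The remainder bound also fails as stated. The survivor bound alone gives only $\mathbb{E}|Y-b|^{5}\ll b^{5}$, which against $\theta^{5}=(u/b)^{5}$ is $O(1)$. You would also need $\mathbb{E}(Y-b)^{4}\ll b^{2}$.

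The Bonferroni fallback controls truncation in $\ell$, which is already handled. It says nothing about the full sum.

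\textbf{The missing idea.} Drop both the centered expansion and the fixed-$m$ viewpoint, and use a single generating function of $Y$. By the binomial theorem and $\mathbb{E}\binom{Y}{k}=\rho^{k}S_{y}(k;H)$,
\[
\sum_{k}S_{y}(k;H)(u/H)^{k}(z-1)^{k}=\mathbb{E}(1+\theta(z-1))^{Y}.
\]
This is what your inner sums become after summing over $m$ with weight $z^{m}$; it is the thinned sieve of Subsection~\ref{subsec:thinning-sieve-survivors}.

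Write $(1+\theta w)^{Y}=e^{bs}e^{sD}$, where $w=z-1$, $s=\log(1+\theta w)$ and $D=Y-b$.
\begin{itemize}
\item The pointwise survivor bound gives $|sD|\ll_{U,R}1$ for every outcome. Hence $\mathbb{E}e^{sD}=1+s^{2}v/2+O(|s|^{3}\mathbb{E}|D|^{3})$, and H\"older's inequality with the fourth moment gives $\mathbb{E}|D|^{3}\ll b^{3/2}$.
\item Combined with $e^{bs}=e^{uw}(1-b\theta^{2}w^{2}/2+O(b^{-2}))$, this leaves the variance-minus-mean term $\theta^{2}(v-b)w^{2}/2$. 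Goldston's pair average evaluates it as $-\eta(H)u^{2}w^{2}/2+O(H^{-3/2+\epsilon})$.
\item Restoring the Euler factors above $y$ costs $O(H^{-3})$.
\item Cauchy's formula on $|z|=2$ then extracts the coefficient of $z^{m}$ with error $O(2^{-m}H^{-3/2+\epsilon})$, uniformly in $m$. This also handles uniformity in $m$ for your $r=0$ binomial term, which the fixed-$m$ expansion in the introduction does not.
\end{itemize}
Only the pair average and the fourth-moment bound enter. No asymptotics for $R_3$, $R_4$ or any $r\ge5$ are needed.

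\textbf{A slip in your $r=2$ input.} Since $\sum_{|\mathcal{J}|=2}\mathfrak{S}_0(\mathcal{J})=S(2;H)-\binom{H}{2}$, the unordered centered pair sum is
\[
-\tfrac12H\log H+\tfrac12(2-\gamma-\log 2\pi)H+O(H^{1/2+\epsilon}).
\]
With your constant, the $r=0$ and $r=2$ terms would miss $Q(m;u,H)$ by $C_{2}(m;u)p_{m}(u)/(2H)$.
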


\begin{corollary}
\label{cor:singular-series-growing-truncation}
Fix an integer $m \ge 0$ and real numbers $U, \delta > 0$. Let $L$ be the least even integer greater than or equal to $(1 + \delta)\log H/\log\log H$. As $H \to \infty$, uniformly for $0 \le u \le U$ and $j \in \{L, L + 1\}$,
\begin{equation*}
\mathcal{A}(m, j; u, H) = Q(m; u, H) + O_{m, U, \delta, \epsilon}\left(H^{-3/2 + \epsilon} + H^{-1 - \delta + o(1)}\right).
\end{equation*}
In particular, the error is $o(1/H)$ if $0 < \epsilon < 1/2$.
\end{corollary}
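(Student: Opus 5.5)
The corollary follows by specializing Theorem~\ref{thm:weighted-singular-series-asymptotic} to the chosen truncation order and bounding the truncation term. We apply the theorem with fixed $m$, $U$, and $j \in \{L, L+1\}$. Since $m$ is fixed, $2^{-m} \le 1$, and the first error term is $O_{U,\epsilon}(H^{-3/2+\epsilon})$ uniformly in $u \in [0,U]$. It remains to show that
\begin{equation*}
\frac{(Cu)^{m+j+1}}{m!\,(j+1)!} \le \frac{(CU)^{m+j+1}}{(j+1)!} \ll_{m,U} \frac{(CU)^{j+1}}{(j+1)!}
\end{equation*}
is $H^{-1-\delta+o(1)}$ when $j+1 \ge L+1$. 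We may assume $CU \ge 1$ by enlarging $U$. Then the bound is monotone decreasing in $j$ once $j+1 > CU$, so it suffices to treat $j = L$.

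Put $K := L+1$, so that $K \ge (1+\delta)\log H/\log\log H$ and $K \le (1+\delta)\log H/\log\log H + 3$. Stirling's formula gives
\begin{equation*}
\log\bigl((CU)^{K}/K!\bigr) = -K\log K + K + K\log(CU) + O(\log K) = -K\log K\,(1 + O_{U}(1/\log K)).
\end{equation*}
Since $K \asymp \log H/\log\log H$, we have $\log K = \log\log H - \log\log\log H + O(1) = (1+o(1))\log\log H$. Hence
\begin{equation*}
K\log K \ge (1+\delta)\frac{\log H}{\log\log H}(1+o(1))\log\log H = (1+\delta+o(1))\log H.
\end{equation*}
Combining the two displays, the truncation term is at most $\exp(-(1+\delta+o(1))\log H) = H^{-1-\delta+o(1)}$. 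The $o(1)$ depends only on $U$ and $\delta$, and the bound is uniform in $u$ and in $j \in \{L,L+1\}$.

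Adding the two error terms gives the stated estimate. For the final remark, take $0<\epsilon<1/2$. Then $H^{-3/2+\epsilon} = o(1/H)$, and $H^{-1-\delta+o(1)} = o(1/H)$ because $\delta>0$ is fixed. So the total error is $o(1/H)$.

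The only delicate point is the Stirling asymptotic: we have to make sure the lower-order terms $K$, $K\log(CU)$ and the $\log\log\log H$ correction are absorbed into the $o(1)$ in the exponent. This holds because each of them is $O(\log H\cdot\log\log\log H/\log\log H) = o(\log H)$.
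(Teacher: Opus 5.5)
Your proof is correct and follows the paper's argument: apply Theorem~\ref{thm:weighted-singular-series-asymptotic} and use Stirling's formula to show that the factorial term $(CU)^{m+j+1}/(m!(j+1)!)$ is $H^{-1-\delta+o(1)}$ for $j \in \{L, L+1\}$. Your reduction to $j = L$ by monotonicity is harmless but unnecessary, since the paper's estimate $-(j+1)\log(j+1) + O_{m,U}(j+1)$ treats both values of $j$ at once.
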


\begin{proof}[Proof of Corollary~\ref{cor:singular-series-growing-truncation}]
Apply Theorem~\ref{thm:weighted-singular-series-asymptotic}, noting that by Stirling's formula,
\begin{equation*}
\log\left(\frac{(CU)^{m + j + 1}}{m!(j + 1)!}\right) = -(j + 1)\log(j + 1) + O_{m, U}(j + 1) = -(1 + \delta)\log H + o(\log H).
\end{equation*}
\end{proof}

\subsection{A finite probability space}
\label{subsec:finite-sieve-probability-space}

Throughout the proof put
\begin{equation*}
y := H^{3}, \qquad q := \prod_{p \, \le \, y}p, \qquad \rho := \frac{\phi(q)}{q} = \prod_{p \, \le \, y}\left(1 - \frac{1}{p}\right),
\end{equation*}
where $\phi$ is Euler's totient function. Mertens' product estimate gives $\rho \asymp 1/\log H$, with absolute implied constants. Choose $a$ uniformly from the residue classes modulo $q$, and let
\begin{equation*}
Y(a) := \left|\{h \in [H] : \gcd(a + h, q) = 1\}\right|.
\end{equation*}
We write $\mathbb{P}$ and $\mathbb{E}$ for probability and expectation on this finite space. Equivalently, for each prime $p \le y$, independently choose one residue class to exclude from $[H]$. The Chinese remainder theorem identifies these choices with $-a \bmod q$. These probabilities express finite averages of explicitly defined sets; they make no assumption about the distribution of primes.

For a subset $\mathcal{H}$ of $[H]$, define the finite singular series and its sum by
\begin{equation*}
\mathfrak{S}_{y}(\mathcal{H}) := \prod_{p \, \le \, y}\left(1 - \frac{|\mathcal{H} \bmod p|}{p}\right)\left(1 - \frac{1}{p}\right)^{-|\mathcal{H}|}, \qquad S_{y}(k; H) := \sum_{\substack{\mathcal{H} \, \subseteq \, [H] \\ |\mathcal{H}| \, = \, k}}\mathfrak{S}_{y}(\mathcal{H}).
\end{equation*}
Both empty-set values are $1$, and $S_{y}(k; H) = 0$ for $k > H$.

\begin{lemma}
\label{lem:finite-sieve-factorial-moments}
For every integer $k \ge 0$,
\begin{equation}
\label{eq:finite-sieve-factorial-moments}
\mathbb{E}\binom{Y}{k} = \rho^{k}S_{y}(k; H).
\end{equation}
In particular, $\mathbb{E}Y = H\rho$.
\end{lemma}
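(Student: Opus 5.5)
Write $\binom{Y}{k}$ as a sum of indicators over $k$-subsets, swap sum and expectation, and use CRT independence across primes $p \le y$. Concretely,
\begin{equation*}
\binom{Y(a)}{k} = \sum_{\substack{\mathcal{H} \, \subseteq \, [H] \\ |\mathcal{H}| \, = \, k}}\prod_{h \, \in \, \mathcal{H}}\mathbf{1}\left[\gcd(a + h, q) = 1\right],
\end{equation*}
since choosing $k$ of the $Y(a)$ surviving shifts is the same as choosing a $k$-subset all of whose elements survive. This includes $k = 0$, where the only subset is $\varnothing$. It also includes $k > H$, where both sides vanish. Taking expectations, it suffices to show that for each fixed $\mathcal{H}$ with $|\mathcal{H}| = k$,
\begin{equation*}
\mathbb{P}\left(\gcd(a + h, q) = 1 \text{ for all } h \in \mathcal{H}\right) = \rho^{k}\,\mathfrak{S}_{y}(\mathcal{H}).
\end{equation*}

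For the probability, use the product description of the space from Subsection~\ref{subsec:finite-sieve-probability-space}. By the Chinese remainder theorem, $a \bmod q$ uniform is equivalent to $(a \bmod p)_{p \le y}$ independent and uniform. The event factors over $p \le y$ as the events $p \nmid a + h$ for all $h \in \mathcal{H}$, that is, $a \bmod p \notin -\mathcal{H} \bmod p$. The set $-\mathcal{H} \bmod p$ has exactly $|\mathcal{H} \bmod p|$ elements, so this event has probability $1 - |\mathcal{H} \bmod p|/p$. Independence then gives
\begin{equation*}
\prod_{p \, \le \, y}\left(1 - \frac{|\mathcal{H} \bmod p|}{p}\right) = \prod_{p \, \le \, y}\left(1 - \frac{1}{p}\right)^{k}\cdot\mathfrak{S}_{y}(\mathcal{H}) = \rho^{k}\mathfrak{S}_{y}(\mathcal{H}),
\end{equation*}
using the definitions of $\mathfrak{S}_{y}$ and $\rho$. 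Summing over $\mathcal{H}$ yields \eqref{eq:finite-sieve-factorial-moments}.

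For the special case $k = 1$, note that $\mathfrak{S}_{y}(\{h\}) = 1$ for every $h$, because each factor equals $(1 - 1/p)(1 - 1/p)^{-1}$. Hence $S_{y}(1; H) = H$, and $\mathbb{E}Y = H\rho$.

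There is no real obstacle here. The only points needing care are the CRT identification of the uniform measure with a product measure, and the bookkeeping that $|{-\mathcal{H}} \bmod p| = |\mathcal{H} \bmod p|$. Both are immediate.
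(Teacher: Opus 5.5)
Your proposal is correct and follows the paper's proof: you write $\binom{Y}{k}$ as a sum over $k$-element sets of survivors, and you evaluate each survival probability as $\prod_{p \le y}(1 - |\mathcal{H} \bmod p|/p) = \rho^{k}\mathfrak{S}_{y}(\mathcal{H})$. Your explicit treatment of the CRT product structure and of the cases $k = 0$, $k = 1$ and $k > H$ adds detail that the paper leaves implicit.
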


\begin{proof}
For a fixed $k$-element set $\mathcal{H}$, the probability that all its elements survive is
\begin{equation*}
\mathbb{P}\left(\gcd(a + h, q) = 1\text{ for every }h \in \mathcal{H}\right) = \prod_{p \, \le \, y}\left(1 - \frac{|\mathcal{H} \bmod p|}{p}\right) = \rho^{k}\mathfrak{S}_{y}(\mathcal{H}).
\end{equation*}
Here and below, ordinary probabilities are averages over the $q$ possible values of $a$. The binomial coefficient counts the $k$-element sets of survivors, so summing these probabilities proves \eqref{eq:finite-sieve-factorial-moments}.
\end{proof}

\begin{lemma}
\label{lem:finite-euler-product-comparison}
Uniformly for $0 \le k \le H$,
\begin{equation}
\label{eq:finite-euler-product-comparison}
0 \le S_{y}(k; H) - S(k; H) \ll \frac{k^{2}}{y}S_{y}(k; H).
\end{equation}
The implied constant is absolute, and the difference is zero for $k = 0, 1$.
\end{lemma}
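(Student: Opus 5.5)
We compare the two singular series prime by prime. For a $k$-element set $\mathcal{H} \subseteq [H]$, the tail factor is
\begin{equation*}
\mathfrak{S}(\mathcal{H}) = \mathfrak{S}_{y}(\mathcal{H}) \prod_{p \, > \, y}\left(1 - \frac{|\mathcal{H} \bmod p|}{p}\right)\left(1 - \frac{1}{p}\right)^{-k}.
\end{equation*}
Since $y = H^{3} > H$, every prime $p > y$ exceeds the diameter of $\mathcal{H}$. The elements of $\mathcal{H}$ are therefore distinct modulo $p$, so $|\mathcal{H} \bmod p| = k$. It follows that $\mathcal{H}$ is automatically admissible at these primes, and each tail factor equals $f_{p}(k) := (1 - k/p)(1 - 1/p)^{-k}$ for every $\mathcal{H}$. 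Hence $S(k; H) = S_{y}(k; H)\prod_{p > y} f_{p}(k)$, and the lemma reduces to the estimate
\begin{equation*}
1 - \frac{Ck^{2}}{y} \le \prod_{p \, > \, y} f_{p}(k) \le 1,
\end{equation*}
with $C$ absolute. For $k = 0, 1$, every $f_{p}(k) = 1$, so the difference vanishes.

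For the upper bound, the inequality $(1 - 1/p)^{k} \ge 1 - k/p$ (Bernoulli, valid since $k \le H < p$) gives $0 < f_{p}(k) \le 1$. Hence $S(k; H) \le S_{y}(k; H)$, which is the nonnegativity claim.

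For the lower bound, take logarithms. Since $k/p \le H/H^{3} \le 1/4$ for $H \ge 2$, the expansion $\log(1 - x) = -x - x^{2}/2 - \cdots$ is valid with uniform constants. This gives
\begin{equation*}
\log f_{p}(k) = \log\left(1 - \frac{k}{p}\right) - k\log\left(1 - \frac{1}{p}\right) = -\sum_{r \, \ge \, 2}\frac{k^{r} - k}{r p^{r}}.
\end{equation*}
The right side is $\ge -c k^{2}/p^{2}$ with $c$ absolute, because $k/p \le 1/4$ makes the series geometric, dominated by its $r = 2$ term. Summing over $p > y$ and using $\sum_{p > y} p^{-2} \le \sum_{n > y} n^{-2} \le 1/y$ gives
\begin{equation*}
\log\prod_{p \, > \, y} f_{p}(k) \ge -\frac{ck^{2}}{y}.
\end{equation*}
Therefore $\prod_{p > y} f_{p}(k) \ge e^{-ck^{2}/y} \ge 1 - ck^{2}/y$. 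Multiplying by $S_{y}(k; H) \ge 0$ yields the stated bound $S_{y}(k; H) - S(k; H) \ll (k^{2}/y) S_{y}(k; H)$.

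The only point needing care is uniformity in $k$ up to $H$. This is supplied by the choice $y = H^{3}$, which keeps $k/p$ small and makes $|\mathcal{H} \bmod p| = k$ exact beyond $y$. No step is a real obstacle.
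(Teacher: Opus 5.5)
Your proof is correct and follows the paper's argument. Both factor off the common tail $\prod_{p>y}(1-k/p)(1-1/p)^{-k}$, using that elements of $[H]$ are distinct modulo every $p>y$, and then bound its logarithm by $\ll k^{2}\sum_{n>y}n^{-2}\ll k^{2}/y$, uniformly because $k/p\le 1/4$. The only cosmetic difference is that you get nonnegativity from Bernoulli's inequality, whereas the paper reads it off the same logarithmic series, since $k^{r}-k\ge 0$.
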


\begin{proof}
If $p > y > H$, all elements of a $k$-element subset of $[H]$ represent distinct residue classes modulo $p$. Consequently,
\begin{equation*}
\mathfrak{S}(\mathcal{H}) = \mathfrak{S}_{y}(\mathcal{H})\,t(k; y), \qquad t(k; y) := \prod_{p \, > \, y}\left(1 - \frac{k}{p}\right)\left(1 - \frac{1}{p}\right)^{-k}.
\end{equation*}
For $k = 0, 1$ the last product is $1$. For $2 \le k \le H$, expand the logarithms to obtain
\begin{equation*}
\log t(k; y) = -\sum_{p \, > \, y}\sum_{r \, = \, 2}^{\infty}\frac{k^{r} - k}{r p^{r}}, \qquad 0 \le -\log t(k; y) \ll k^{2}\sum_{n \, > \, y}\frac{1}{n^{2}} \ll \frac{k^{2}}{y}.
\end{equation*}
The estimates are uniform because $k/p \le H/y \le 1/4$. It follows that 
\begin{equation*}
0 \le 1 - t(k; y) \le -\log t(k; y) \ll \frac{k^{2}}{y}.
\end{equation*}
Multiply by the nonnegative finite singular series and sum. The same argument includes inadmissible sets, for which both singular series vanish.
\end{proof}

\subsection{An upper bound uniform in the number of shifts}
\label{subsec:uniform-sieve-upper-bound}

We need only an upper bound for the high factorial moments. The following elementary form of the Selberg upper-bound sieve is sufficient; see \cite[Section~3.2]{MV2006}. We include the proof to make the uniformity in the translate explicit and to record its consequence for all factorial moments.

\begin{lemma}
\label{lem:uniform-survivor-bound}
There is an absolute constant $C_{0} \ge 1$ such that
\begin{equation}
\label{eq:uniform-survivor-bound}
Y(a) \le C_{0}H\rho
\end{equation}
for every residue class $a \bmod q$ and every integer $H \ge 2$. Consequently, for every integer $k \ge 0$,
\begin{equation}
\label{eq:uniform-singular-series-upper-bound}
0 \le S(k; H) \le S_{y}(k; H) \le \frac{(C_{0}H)^{k}}{k!}.
\end{equation}
\end{lemma}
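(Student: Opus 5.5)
The bound \eqref{eq:uniform-survivor-bound} is a statement about a single sifted interval: among the $H$ integers $a+1,\ldots,a+H$, those coprime to $q$ avoid one residue class modulo each prime $p\le y$. I will apply the Selberg upper-bound sieve with level $D=\sqrt{H}$ (sifting only by primes $p\le z:=\sqrt{H}$, which is all we need for an upper bound since dropping primes only enlarges the survivor set). Take weights $\lambda_d$, supported on squarefree $d\le z$ with $d\mid P(z):=\prod_{p\le z}p$ and $\lambda_1=1$. Then
\begin{equation*}
Y(a)\le\sum_{h=1}^{H}\Bigl(\sum_{d\mid\gcd(a+h,P(z))}\lambda_d\Bigr)^{2}=\sum_{d_1,d_2}\lambda_{d_1}\lambda_{d_2}\Bigl(\frac{H}{[d_1,d_2]}+O(1)\Bigr).
\end{equation*}
The key point for uniformity in $a$ is that, for each squarefree $d$, the number of $h\in[H]$ with $d\mid a+h$ is $H/d+\theta$ with $|\theta|\le1$, independently of $a$.

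With the standard optimal choice of $\lambda_d$, the main term is $H/G(z)$, where $G(z)=\sum_{d\le z,\,d\mid P(z)}\mu^2(d)/\phi(d)$. We have $G(z)\ge\sum_{d\le z}\mu^2(d)/\phi(d)\ge\log z$, since each $n\le z$ is $d\cdot(\text{squarefull-ish})$; it suffices to use the classical bound $\sum_{d\le z}\mu^2(d)/\phi(d)\ge\log z$. The standard estimate $|\lambda_d|\le1$ gives a remainder of at most $(\sum_{d\le z}1)^2\le z^2=H$. Therefore
\begin{equation*}
Y(a)\le\frac{H}{\log\sqrt H}+H\ll\ldots
\end{equation*}
The remainder $z^2=H$ is too large here, so I take the level $z=H^{1/2}/\log H$ instead. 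That makes the remainder $O(H/\log^{2}H)$ while the main term stays $\ll H/\log H$. Since Mertens gives $\rho\asymp1/\log H$ (with $y=H^3$, $\log y\asymp\log H$), we get $Y(a)\ll H/\log H\ll H\rho$ with an absolute constant, for $H$ large. Small $H$ is handled by the trivial bound $Y\le H$ and $\rho\gg1/\log H\gg1$ in a bounded range, after enlarging $C_0$.

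For \eqref{eq:uniform-singular-series-upper-bound}, the nonnegativity and the comparison $S\le S_y$ are Lemma~\ref{lem:finite-euler-product-comparison} for $k\le H$; for $k>H$ all three quantities in the chain except the last are $0$. By Lemma~\ref{lem:finite-sieve-factorial-moments},
\begin{equation*}
\rho^kS_y(k;H)=\mathbb{E}\binom{Y}{k}\le\max_a\binom{Y(a)}{k}\le\frac{Y(a)^k}{k!}\le\frac{(C_0H\rho)^k}{k!}.
\end{equation*}
Dividing by $\rho^k$ gives the claim.

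The main obstacle is bookkeeping the sieve with absolute constants uniform in $a$ and $H$. This means choosing the level so that the $O(z^2)$ remainder is dominated, and verifying the standard lower bound for $G(z)$. The latter is the step I would cite from \cite[Section~3.2]{MV2006}, rechecking only that the congruence counts are $a$-independent up to $\pm1$.
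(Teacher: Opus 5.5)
Your proposal is correct and follows essentially the same route as the paper. Both arguments use a Selberg upper-bound sieve on $(a, a+H]$, which is uniform in $a$ because each divisibility count is $H/d + O(1)$. The main term is $H$ divided by the sum of $1/\phi(d)$ over squarefree $d \le z$, which is $\gg \log z$. The factorial-moment bound then follows from $\binom{Y}{k} \le Y^{k}/k!$ together with Lemma~\ref{lem:finite-sieve-factorial-moments}.

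The only difference is in bookkeeping. The paper writes the weights explicitly and uses the crude bound $|\lambda_d| \le d$, so it takes the smaller level $z = H^{1/8}$, giving a remainder $O(H^{1/2})$. You instead cite the sharper standard bound $|\lambda_d| \le 1$, which lets you take $z = H^{1/2}/\log H$ with remainder $O(H/(\log H)^{2})$.
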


\begin{proof}
It suffices first to take $H$ large. Put $z = H^{1/8}$, and let $\mathcal{D}$ be the set of squarefree positive integers less than or equal to $z$. Every member of $\mathcal{D}$ divides $q$. Write $\omega(d)$ for the number of distinct prime divisors of $d$, and put\footnote{The coefficients $\lambda_{d}$ are sieve weights used only in this proof; they are unrelated to the Poisson parameter $\lambda$ used elsewhere.}
\begin{equation*}
\mathcal{T} := \sum_{r \, \in \, \mathcal{D}}\frac{1}{\phi(r)}, \qquad b_{r} := \frac{(-1)^{\omega(r)}}{\phi(r)\mathcal{T}}, \qquad \lambda_{d} := d\sum_{\substack{r \, \in \, \mathcal{D} \\ d \, \mid \, r}}(-1)^{\omega(r/d)}b_{r} \quad (d \in \mathcal{D}).
\end{equation*}
Then $\lambda_{1} = 1$ and $|\lambda_{d}| \le d$. Inversion over the squarefree divisors gives
\begin{equation*}
\sum_{\substack{d \, \in \, \mathcal{D} \\ r \, \mid \, d}}\frac{\lambda_{d}}{d} = b_{r} \qquad (r \in \mathcal{D}).
\end{equation*}
Indeed, after substituting the definition of $\lambda_{d}$, the coefficient of $b_{s}$ is the sum of $(-1)^{\omega(s/d)}$ over $r \mid d \mid s$, which is $1$ if $s = r$ and $0$ otherwise. Using $\gcd(d, e) = \sum_{r \, \mid \, d,\ r \, \mid \, e}\phi(r)$, we therefore have
\begin{equation*}
\sum_{d, e \, \in \, \mathcal{D}}\frac{\lambda_{d}\lambda_{e}}{\operatorname{lcm}(d, e)} = \sum_{r \, \in \, \mathcal{D}}\phi(r)\left(\sum_{\substack{d \, \in \, \mathcal{D} \\ r \, \mid \, d}}\frac{\lambda_{d}}{d}\right)^{2} = \frac{1}{\mathcal{T}}.
\end{equation*}
If $h$ survives, the sum of $\lambda_{d}$ over $d \in \mathcal{D}$ dividing $a + h$ is $\lambda_{1} = 1$. Its square is nonnegative for all other $h$. Counting multiples of $\operatorname{lcm}(d, e)$ in an interval of $H$ consecutive integers now gives
\begin{equation*}
Y(a) \le \sum_{h \, = \, 1}^{H}\left(\sum_{\substack{d \, \in \, \mathcal{D} \\ d \, \mid \, a + h}}\lambda_{d}\right)^{2} = \frac{H}{\mathcal{T}} + O\left(\left(\sum_{d \, \in \, \mathcal{D}}|\lambda_{d}|\right)^{2}\right) = \frac{H}{\mathcal{T}} + O(z^{4}).
\end{equation*}
All constants here are independent of $a$.

For completeness, $\mathcal{T} \gg \log z$. The number of nonsquarefree integers less than or equal to $x$ is at most $x\sum_{r \, = \, 2}^{\infty}r^{-2}$, and this sum is less than $1$. Thus the number of squarefree integers less than or equal to $x$ is bounded below by a positive constant times $x$, apart from an absolute additive constant. Partial summation gives $\sum_{r \, \in \, \mathcal{D}}1/r \gg \log z$, and $1/\phi(r) \ge 1/r$. Since $z^{4} = H^{1/2}$, the preceding estimate proves $Y(a) \ll H/\log H \ll H\rho$. Increasing an absolute constant covers the remaining values of $H$.

Finally, \eqref{eq:finite-sieve-factorial-moments} and \eqref{eq:uniform-survivor-bound} give
\begin{equation*}
\rho^{k}S_{y}(k; H) = \mathbb{E}\binom{Y}{k} \le \frac{\mathbb{E}Y^{k}}{k!} \le \frac{(C_{0}H\rho)^{k}}{k!}.
\end{equation*}
Together with Lemma~\ref{lem:finite-euler-product-comparison}, this proves \eqref{eq:uniform-singular-series-upper-bound}; for $k > H$ both sums are zero.
\end{proof}

\subsection{The second and fourth moments}
\label{subsec:sieve-second-fourth-moments}

Let $b := H\rho$ be the mean of $Y$, and write 
\begin{equation*}
v := \mathbb{E}(Y - b)^{2}
\end{equation*}
for its variance. The two moment estimates below use Goldston's pair average \cite[equation~(33)]{GOL1990}, also recorded in \cite[equations~(47)--(48)]{MS2004}, and the fourth-moment case of Montgomery and Soundararajan's Theorem~1 \cite{MS2004}. All moment orders used in these inputs are fixed.

\begin{lemma}
\label{lem:sieve-low-moments}
For every $\epsilon > 0$,
\begin{equation}
\label{eq:sieve-variance-correction}
v - b = -\rho^{2}H^{2}\eta(H) + O_{\epsilon}\left(\rho^{2}H^{1/2 + \epsilon}\right).
\end{equation}
Moreover, with absolute implied constants,
\begin{equation}
\label{eq:sieve-moment-bounds}
v \ll H\rho, \qquad \mathbb{E}(Y - b)^{4} \ll H^{2}\rho^{2}, \qquad \mathbb{E}|Y - b|^{3} \ll (H\rho)^{3/2}.
\end{equation}
\end{lemma}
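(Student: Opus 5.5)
The plan is to express each moment of $Y$ through the factorial moments of Lemma~\ref{lem:finite-sieve-factorial-moments}, and then use the fixed-order singular-series averages (Goldston's pair average, and the fourth-moment case of Montgomery--Soundararajan), transported from $\mathfrak{S}$ to $\mathfrak{S}_{y}$ by Lemma~\ref{lem:finite-euler-product-comparison}.

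\textbf{Variance.} Since $\mathbb{E}Y = b$ and $Y^{2} = 2\binom{Y}{2} + Y$, we have
\begin{equation*}
v = 2\rho^{2}S_{y}(2; H) + b - b^{2}.
\end{equation*}
By Lemma~\ref{lem:finite-euler-product-comparison} with $k = 2$ and $y = H^{3}$,
\begin{equation*}
S_{y}(2; H) = S(2; H) + O\left(H^{-3}S_{y}(2; H)\right) = S(2; H) + O(H^{-1}),
\end{equation*}
using the bound $S_{y}(2;H) \ll H^{2}$ from Lemma~\ref{lem:uniform-survivor-bound}. Goldston's pair average, in our subset normalization, reads
\begin{equation*}
S(2; H) = \frac{H^{2}}{2} - \frac{H}{2}\left(\log H + \log(2\pi) + \gamma - 1\right) + O_{\epsilon}(H^{1/2 + \epsilon}).
\end{equation*}
Hence $2S(2;H) = H^{2}(1 - \eta(H)) + O_{\epsilon}(H^{1/2+\epsilon})$. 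Multiplying by $\rho^{2}$ and subtracting $b^{2} = \rho^{2}H^{2}$ gives \eqref{eq:sieve-variance-correction}. Since $\rho^{2}H^{2}\eta(H) \asymp \rho^{2}H\log H \asymp H\rho$, the same identity also gives $v \ll H\rho$.

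\textbf{Fourth moment.} This step carries the real work. I would first write
\begin{equation*}
\mathbb{E}(Y - b)^{4} = \sum_{h_{1},\ldots,h_{4} \in [H]}\mathbb{E}\prod_{i=1}^{4}\left(\mathbf{1}_{h_{i}} - \rho\right),
\end{equation*}
where $\mathbf{1}_{h}$ indicates that $a + h$ is coprime to $q$. Next I would group the tuples according to the distinct values among $h_{1},\ldots,h_{4}$. Each group becomes a sum over sets $\mathcal{H}$ of size $r \le 4$ of $\rho^{r}$ times centred singular series $\mathfrak{S}_{0,y}(\mathcal{H}) := \sum_{\mathcal{T} \subseteq \mathcal{H}}(-1)^{|\mathcal{H} \setminus \mathcal{T}|}\mathfrak{S}_{y}(\mathcal{T})$, with polynomial coefficients in $\rho$ coming from repeated indices. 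This is exactly the structure Montgomery and Soundararajan analyse. Their Theorem~1 with $k = 4$ and $k = 3$, together with the pair average for $k = 2$, gives the centred sums $\sum_{|\mathcal{H}|=r}\mathfrak{S}_{0}(\mathcal{H})$. Lemma~\ref{lem:finite-euler-product-comparison} shows that replacing $\mathfrak{S}_{y}$ by $\mathfrak{S}$ costs $O(H^{-3}\cdot H^{4}) = O(H)$ on each subset sum, and this is acceptable. The resulting bounds are $\ll H^{2}(\log H)^{2}$ for $r = 4$, $\ll H^{3/2+\epsilon}$ for $r = 3$, and $\ll H\log H$ for $r = 2$. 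Weighting by $\rho^{r}$, with $\rho \asymp 1/\log H$, each contribution is $\ll H^{2}\rho^{2}$.

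I expect the bookkeeping of repeated indices and the $\rho$-powers to be the main obstacle. Montgomery and Soundararajan's main term for fourth moments of reduced residues has the Gaussian shape $3(H\rho\log(\cdot))^{2}$. What needs checking is that the $\log$ factors pair with the $\rho$ factors to give $(H\rho)^{2}$ rather than something larger.

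\textbf{Third absolute moment.} I would deduce it by Cauchy--Schwarz:
\begin{equation*}
\mathbb{E}|Y - b|^{3} \le \left(v\,\mathbb{E}(Y - b)^{4}\right)^{1/2} \ll \left(H\rho \cdot H^{2}\rho^{2}\right)^{1/2} = (H\rho)^{3/2}.
\end{equation*}
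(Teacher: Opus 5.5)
Your proof is correct. The variance step is the paper's, and your third-moment step is an immaterial variant: Cauchy--Schwarz, $\mathbb{E}|Y-b|^{3}\le(v\,\mathbb{E}(Y-b)^{4})^{1/2}$, in place of the paper's H\"older bound $(\mathbb{E}(Y-b)^{4})^{3/4}$.

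The genuine difference is the fourth moment.

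\textbf{The paper's route.} The paper expands nothing. It notes that $\mathbb{E}(Y-b)^{r}=\rho^{r}V_{r}(q;H)$ in Montgomery and Soundararajan's reduced-residue normalization. It then quotes their fourth-moment estimate $V_{4}(q;H)=3V_{2}(q;H)^{2}+O(H^{2-1/28}\rho^{-18})$ directly for the squarefree modulus $q$, which gives $\mathbb{E}(Y-b)^{4}=3v^{2}+O(H^{2-1/28}\rho^{-14})\ll H^{2}\rho^{2}$. The diagonal terms and the finite Euler product are handled inside the cited theorem, so no transport via Lemma~\ref{lem:finite-euler-product-comparison} is needed.

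\textbf{Your route.} You expand by hand and feed in the centred singular-series asymptotics at $k=3,4$ together with the pair average. This costs an extra input (the $k=3$ bound) and some bookkeeping. In exchange it uses only the singular-series form of their result and shows where the Gaussian term $3(H\rho)^{2}$ comes from.

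\textbf{The bookkeeping you flagged does close.} Write $(\mathbf{1}_{h}-\rho)^{m}=\alpha_{m}(\mathbf{1}_{h}-\rho)+\beta_{m}$, with $\beta_{1}=0$ and $|\beta_{m}|\ll\rho$.
\begin{enumerate}
\item Each group produces $\rho^{|\mathcal{T}|}\mathfrak{S}_{0,y}(\mathcal{T})$ for every subset $\mathcal{T}$ of the set of distinct values, not only for that set itself.
\item Each value left outside $\mathcal{T}$ has multiplicity at least $2$. It contributes a factor $\ll\rho$ and a free sum of length $H$.
\item Singleton $\mathcal{T}$ give $\mathfrak{S}_{0,y}=0$.
\item Two doubled values with $\mathcal{T}=\varnothing$ give the main term $\asymp(H\rho)^{2}$.
\item A pair $\mathcal{T}$ together with one doubled value outside it has size $\rho^{3}H\cdot H\log H\asymp(H\rho)^{2}$. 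Your description of the coefficients (powers of $\rho$ only) omits this term. The $\log H$ from the pair sum is absorbed by the extra $\rho$, exactly the pairing you anticipated.
\item The remaining terms are $\ll H\rho+\rho^{2}H\log H+\rho^{3}H^{3/2}+\rho^{4}(H\log H)^{2}$.
\item The transport error is $O(\rho^{4}H)$.
\end{enumerate}
So your route also gives $\mathbb{E}(Y-b)^{4}\ll H^{2}\rho^{2}$.
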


\begin{proof}
Goldston's pair average \cite[equation~(33)]{GOL1990} gives
\begin{equation*}
2S(2; H) = 2\sum_{d \, = \, 1}^{H}(H - d)\mathfrak{S}(\{0,d\}) = H^{2}\left(1 - \eta(H)\right) + O_{\epsilon}(H^{1/2 + \epsilon}).
\end{equation*}
The first equality follows by grouping pairs of shifts according to their difference; the same form of the estimate appears in \cite[equation~(48)]{MS2004}. By \eqref{eq:finite-euler-product-comparison} and \eqref{eq:uniform-singular-series-upper-bound}, replacing $S(2; H)$ by $S_{y}(2; H)$ introduces an error $O(H^{2}/y) = O(H^{-1})$. Since $Y^{2} = Y(Y - 1) + Y$, the identity \eqref{eq:finite-sieve-factorial-moments} gives
\begin{equation*}
v - b = \mathbb{E}Y(Y - 1) - b^{2} = \rho^{2}\left(2S_{y}(2; H) - H^{2}\right).
\end{equation*}
This proves \eqref{eq:sieve-variance-correction}. Taking, for example, $\epsilon = 1/4$ there, and recalling $\rho \asymp 1/\log H$, also proves $v \ll H\rho$.

In the notation of \cite[equations~(10)--(11)]{MS2004}, the normalized centered moments of $Y$ are $\mathbb{E}(Y - b)^{r} = \rho^{r}V_{r}(q; H)$. In particular, $V_{2}(q; H) = v/\rho^{2}$. Their Theorem~1, used only at $r = 4$, reads
\begin{equation*}
V_{4}(q; H) = 3V_{2}(q; H)^{2} + O\left(H^{2 - 1/28}\rho^{-18}\right).
\end{equation*}
Multiplying by $\rho^{4}$ gives
\begin{equation*}
\mathbb{E}(Y - b)^{4} = 3v^{2} + O\left(H^{2 - 1/28}\rho^{-14}\right) \ll H^{2}\rho^{2}.
\end{equation*}
Indeed, the ratio of the error term to $H^{2}\rho^{2}$ is $O(H^{-1/28}(\log H)^{16})$, which tends to zero. This proves the bound for all sufficiently large $H$; increasing the absolute implied constant covers the remaining integers $H \ge 2$. Finally, H\"older's inequality gives $\mathbb{E}|Y - b|^{3} \le (\mathbb{E}(Y - b)^{4})^{3/4}$. This proves \eqref{eq:sieve-moment-bounds}.
\end{proof}

\subsection{Thinning the survivors}
\label{subsec:thinning-sieve-survivors}

For fixed $U > 0$ and sufficiently large $H$, uniformly for $0 \le u \le U$, the number
\begin{equation*}
\theta := \frac{u}{H\rho} = \frac{u}{b}
\end{equation*}
lies in $[0, 1]$. Independently retain each of the $Y$ survivors with probability $\theta$, and call the number retained $Z$. Conditional on $Y$, the variable $Z$ has the binomial distribution with parameters $Y$ and $\theta$. In particular, $\mathbb{E}Z = u$ and, for every complex number $z$,
\begin{equation}
\label{eq:thinned-sieve-generating-function}
\mathbb{E}z^{Z} = \mathbb{E}\left(1 + \theta(z - 1)\right)^{Y} = \sum_{k \, = \, 0}^{H}S_{y}(k; H)\left(\frac{u}{H}\right)^{k}(z - 1)^{k}.
\end{equation}
The second equality follows by the binomial theorem and \eqref{eq:finite-sieve-factorial-moments}. This identity converts an entire sum over tuple sizes into one function of $Y$.

\begin{proposition}
\label{prop:singular-series-generating-function}
Define
\begin{equation*}
\mathcal{G}(z; u, H) := \sum_{k \, = \, 0}^{H}S(k; H)\left(\frac{u}{H}\right)^{k}(z - 1)^{k}.
\end{equation*}
For every fixed $U, R, \epsilon > 0$, uniformly for $0 \le u \le U$ and complex $z$ with $|z| \le R$,
\begin{equation}
\label{eq:singular-series-generating-function-expansion}
\mathcal{G}(z; u, H) = e^{u(z - 1)}\left(1 - \frac{\eta(H)u^{2}}{2}(z - 1)^{2}\right) + O_{U, R, \epsilon}(H^{-3/2 + \epsilon}).
\end{equation}
For sufficiently large $H$, depending on $U$, the random variable $Z$ is defined for every $0 \le u \le U$, and the same expansion holds with $\mathcal{G}(z; u, H)$ replaced by $\mathbb{E}z^{Z}$.
\end{proposition}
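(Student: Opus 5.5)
The plan is to prove the expansion first for $\mathbb{E}z^{Z}$, written as $\mathbb{E}(1 + w)^{Y}$ with $w := \theta(z - 1)$, and then pass to $\mathcal{G}$ using Lemma~\ref{lem:finite-euler-product-comparison}. For bounded $H$ there is nothing to prove: every quantity is bounded in terms of $U, R$, so the claim holds after enlarging the constant. Assume $H$ is large. Then $\rho \asymp 1/\log H$, $b = H\rho \to \infty$, and $|w| \ll_{U,R} 1/b$ is small.

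The second equality in \eqref{eq:thinned-sieve-generating-function} is a polynomial identity in $\theta$, valid for every real $\theta$ and not only for $\theta \in [0,1]$. It follows from the binomial theorem and \eqref{eq:finite-sieve-factorial-moments}. So
\[
\mathcal{G}_{y}(z) := \sum_{k} S_{y}(k; H)\left(\frac{u}{H}\right)^{k}(z - 1)^{k} = \mathbb{E}(1 + w)^{Y}.
\]
Put $c := \log(1 + w)$ (principal branch), so that $c = w - w^{2}/2 + O(|w|^{3})$ and $|c| \ll 1/b$. Split
\[
\mathbb{E}(1 + w)^{Y} = e^{bc}\,\mathbb{E}e^{c(Y - b)}.
\]
By Lemma~\ref{lem:uniform-survivor-bound}, $0 \le Y \le C_{0}b$, so $|c(Y - b)| \ll_{U,R} 1$ uniformly in $a$. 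Hence $|e^{x} - 1 - x - x^{2}/2| \le |x|^{3}e^{|x|}/6$ applies with a bounded exponential factor. Since $\mathbb{E}(Y - b) = 0$, this gives
\[
\mathbb{E}e^{c(Y - b)} = 1 + \frac{c^{2}v}{2} + O\left(|c|^{3}\,\mathbb{E}|Y - b|^{3}\right) = 1 + \frac{w^{2}v}{2} + O\left(b^{-3/2}\right).
\]
Here I use $\mathbb{E}|Y - b|^{3} \ll b^{3/2}$ and $v \ll b$ from \eqref{eq:sieve-moment-bounds}, and I absorb $c^{2} - w^{2} = O(b^{-3})$ into the error. Also
\[
bc = u(z - 1) - \frac{u^{2}(z - 1)^{2}}{2b} + O(b^{-2}).
\]
Multiplying the two expansions, the second-order terms combine to
\[
\frac{u^{2}(z - 1)^{2}}{2b^{2}}(v - b),
\]
and the errors are $O(b^{-3/2}) = O_{\epsilon}(H^{-3/2 + \epsilon})$. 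By \eqref{eq:sieve-variance-correction},
\[
\frac{v - b}{b^{2}} = -\eta(H) + O_{\epsilon}(H^{-3/2 + \epsilon}),
\]
because $b^{2} = \rho^{2}H^{2}$. This yields \eqref{eq:singular-series-generating-function-expansion} for $\mathbb{E}z^{Z}$. In doing so I write $e^{u(z-1)}(1 + X + O(\cdot))$ with $X = O(\eta)$, and note that the cross term $X \cdot O(b^{-2})$ and the factor $e^{O(b^{-2})}$ are all within the error.

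To pass from $S_{y}$ to $S$, use \eqref{eq:finite-euler-product-comparison} together with \eqref{eq:uniform-singular-series-upper-bound}:
\[
|\mathcal{G}_{y} - \mathcal{G}| \ll \sum_{k} \frac{k^{2}}{y}\frac{(C_{0}U(R + 1))^{k}}{k!} \ll_{U,R} H^{-3}.
\]
This is negligible.

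The main obstacle is making the cubic Taylor remainder uniform in the complex parameter $z$ while using only a third absolute moment. This is exactly where the deterministic upper bound $Y \le C_{0}b$ from Lemma~\ref{lem:uniform-survivor-bound} is essential: without it, $e^{|c(Y - b)|}$ could not be bounded using moments of fixed order. The remaining step to check carefully is that each discarded term is $O(b^{-3/2})$ rather than $O(b^{-1})$; this relies on the cancellation of the first-order term by centering at the exact mean $b$.
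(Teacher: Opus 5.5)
Your proposal is correct and follows the paper's proof essentially step for step. The shared steps are these: the factorization $\mathbb{E}(1+\theta(z-1))^{Y} = e^{bs}\,\mathbb{E}e^{s(Y-b)}$ with $s=\log(1+\theta(z-1))$; the deterministic bound $Y \le C_{0}b$ to keep $|s(Y-b)|$ bounded, so the cubic remainder is controlled by $\mathbb{E}|Y-b|^{3} \ll b^{3/2}$; the cancellation producing $\theta^{2}(v-b)$, evaluated via \eqref{eq:sieve-variance-correction}; and Lemma~\ref{lem:finite-euler-product-comparison} with \eqref{eq:uniform-singular-series-upper-bound} to restore the omitted Euler factors at cost $O(H^{-3})$.
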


\begin{proof}
Put $w = z - 1$, $D = Y - b$, and $s = \log(1 + \theta w)$, using the power series for the logarithm near $1$. For $H$ sufficiently large in terms of $U$ and $R$, we have $|\theta w| \le 1/2$. Taylor expansion gives
\begin{equation*}
s = \theta w - \frac{\theta^{2}w^{2}}{2} + O_{R}(\theta^{3}), \qquad bs = uw - \frac{b\theta^{2}w^{2}}{2} + O_{U, R}(b^{-2}), \qquad s^{2} = \theta^{2}w^{2} + O_{R}(\theta^{3}).
\end{equation*}
Here $b\theta = u$ and $b\theta^{3} \ll_{U} b^{-2}$. The upper bound \eqref{eq:uniform-survivor-bound} implies $|D| \le (C_{0} + 1)b$, and hence $|sD| \ll_{U, R} 1$. The remainder in the expansion of $e^{sD}$ is therefore bounded by a constant times $|sD|^{3}$, uniformly for every outcome. Since $\mathbb{E}D = 0$, Lemma~\ref{lem:sieve-low-moments} gives
\begin{equation*}
\mathbb{E}e^{sD} = 1 + \frac{s^{2}v}{2} + O_{U, R}\left(|s|^{3}\mathbb{E}|D|^{3}\right) = 1 + \frac{\theta^{2}w^{2}v}{2} + O_{U, R}(b^{-3/2}).
\end{equation*}
The replacement of $s^{2}$ by $\theta^{2}w^{2}$ costs $O_{U, R}(\theta^{3}v) = O_{U, R}(b^{-2})$, since $v \ll b$. Similarly,
\begin{equation*}
e^{bs} = e^{uw}\left(1 - \frac{b\theta^{2}w^{2}}{2} + O_{U, R}(b^{-2})\right).
\end{equation*}
Multiplying these two expansions in $\mathbb{E}z^{Z} = e^{bs}\mathbb{E}e^{sD}$ yields
\begin{equation}
\label{eq:thinning-variance-minus-mean}
\mathbb{E}z^{Z} = e^{uw}\left(1 + \frac{\theta^{2}(v - b)}{2}w^{2}\right) + O_{U, R}(b^{-3/2}).
\end{equation}
The cross term has size $O_{U, R}(b^{-2})$. This is the cancellation on which the argument depends: the second term from the binomial thinning subtracts the mean $b$ from the variance $v$.

Now \eqref{eq:sieve-variance-correction} gives
\begin{equation*}
\theta^{2}(v - b) = -u^{2}\eta(H) + O_{U, \epsilon}(H^{-3/2 + \epsilon}).
\end{equation*}
Also, $b^{-3/2} \ll (\log H/H)^{3/2} \ll_{\epsilon} H^{-3/2 + \epsilon}$. Substitution in \eqref{eq:thinning-variance-minus-mean} proves the assertion for $\mathbb{E}z^{Z}$.

It remains to restore the omitted Euler factors. By \eqref{eq:finite-euler-product-comparison}, \eqref{eq:uniform-singular-series-upper-bound} and \eqref{eq:thinned-sieve-generating-function},
\begin{equation*}
\left|\mathcal{G}(z; u, H) - \mathbb{E}z^{Z}\right| \ll \frac{1}{y}\sum_{k \, = \, 0}^{H}\frac{k^{2}(C_{0}u|z - 1|)^{k}}{k!} \ll_{U, R} \frac{1}{y} = H^{-3}.
\end{equation*}
This is smaller than the asserted error and proves \eqref{eq:singular-series-generating-function-expansion}.
\end{proof}

\begin{proof}[Proof of Theorem~\ref{thm:weighted-singular-series-asymptotic}]
Apply Proposition~\ref{prop:singular-series-generating-function} on the circle $|z| = 2$. Cauchy's coefficient formula bounds the coefficient of $z^{m}$ in the error by $O_{U, \epsilon}(2^{-m}H^{-3/2 + \epsilon})$, uniformly for every $m \ge 0$. The coefficient of $z^{m}$ in $\mathcal{G}$ is
\begin{equation*}
\sum_{\ell \, = \, 0}^{\infty}(-1)^{\ell}\binom{m + \ell}{m}\left(\frac{u}{H}\right)^{m + \ell}S(m + \ell; H),
\end{equation*}
a finite sum because $S(k; H) = 0$ for $k > H$. The coefficient in the main expression in \eqref{eq:singular-series-generating-function-expansion} is $Q(m; u, H)$. To see this without exceptions at $m = 0, 1$, observe that
\begin{equation*}
\sum_{m \, = \, 0}^{\infty}Q(m; u, H)z^{m} = e^{u(z - 1)} - \frac{\eta(H)u^{2}}{2}\frac{\partial^{2}}{\partial u^{2}}e^{u(z - 1)} = e^{u(z - 1)}\left(1 - \frac{\eta(H)u^{2}}{2}(z - 1)^{2}\right).
\end{equation*}
For $u > 0$, differentiating $e^{-u}u^{m}/m!$ twice gives the defining factor $((m - u)^{2} - m)/u^{2}$; continuity handles $u = 0$.

Finally, \eqref{eq:uniform-singular-series-upper-bound} bounds the part with $\ell > j$ in absolute value by
\begin{equation*}
\frac{(C_{0}u)^{m}}{m!}\sum_{\ell \, > \, j}\frac{(C_{0}u)^{\ell}}{\ell!} \le e^{C_{0}u}\frac{(C_{0}u)^{m + j + 1}}{m!(j + 1)!}.
\end{equation*}
Here we used $(j + 1 + r)! \ge (j + 1)!r!$ for $r \ge 0$. Subtracting this tail proves \eqref{eq:weighted-singular-series-asymptotic}, with any absolute $C \ge C_{0}$. The proof applies for sufficiently large $H$ in terms of $U$; increasing the implied constant covers the remaining $H$. For this last assertion, coefficient bounds on $|z| = 2$ still give uniformity in $m$, and the same tail estimate is uniform in $j$.
\end{proof}

\subsection{The signed residuals}
\label{subsec:pointwise-singular-series-residuals}

We now translate the estimate for $\mathcal{A} - Q$ into bounds for the residuals defined in Section~\ref{sec:setup-and-bookkeeping}. Recall that
\begin{equation*}
G(k; H) = \frac{H^{k}}{k!}\left(1 - \binom{k}{2}\eta(H)\right), \qquad \Delta_{\mathrm{MS}}(k; H) = S(k; H) - G(k; H),
\end{equation*}
and that the signed truncation residual is
\begin{equation*}
\Delta_{\mathrm{tr}}(m, j; u, H) = -\frac{u^{m}}{m!}\sum_{\ell \, > \, j}\frac{(-u)^{\ell}}{\ell!}\left(1 - \binom{m + \ell}{2}\eta(H)\right).
\end{equation*}
Replacing $S$ by $G + \Delta_{\mathrm{MS}}$ in $\mathcal{A}$ expresses $\mathcal{A} - Q$ as a weighted sum of $\Delta_{\mathrm{MS}}$ plus $\Delta_{\mathrm{tr}}$. The theorem therefore controls their combination. We first bound $\Delta_{\mathrm{tr}}$ separately by an elementary factorial estimate, then deduce the bound for the weighted singular-series residual.

\begin{lemma}
\label{lem:pointwise-truncation-bound}
Put $K := m + j + 1$. For $H \ge 2$, $u \ge 0$ and integers $m, j \ge 0$,
\begin{equation}
\label{eq:pointwise-truncation-bound}
|\Delta_{\mathrm{tr}}(m, j; u, H)| \le \frac{e^{u}u^{K}}{m!(j + 1)!}\left\{1 + \frac{\eta(H)}{2}\left((K + u)^{2} + u\right)\right\}.
\end{equation}
Consequently there is an absolute constant $C_{1} \ge 1$ such that, for every fixed $U > 0$, uniformly for $0 \le u \le U$,
\begin{equation}
\label{eq:pointwise-truncation-factorial-bound}
|\Delta_{\mathrm{tr}}(m, j; u, H)| \ll_{U} \frac{(C_{1}u)^{m + j + 1}}{m!(j + 1)!}.
\end{equation}
\end{lemma}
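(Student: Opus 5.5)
I will prove the bound by applying the triangle inequality to the tail series defining $\Delta_{\mathrm{tr}}$ and then bounding it with elementary factorial estimates. Write $\ell = j + 1 + r$ with $r \ge 0$. Then
\begin{equation*}
|\Delta_{\mathrm{tr}}(m, j; u, H)| \le \frac{u^{m}}{m!}\sum_{r \, = \, 0}^{\infty}\frac{u^{j + 1 + r}}{(j + 1 + r)!}\left(1 + \binom{m + j + 1 + r}{2}\eta(H)\right).
\end{equation*}
Here I use $\eta(H) > 0$ for $H \ge 2$, since $\log(2\pi) + \gamma - 1 > 0$. As in the proof of Theorem~\ref{thm:weighted-singular-series-asymptotic}, the inequality $(j + 1 + r)! \ge (j + 1)!\,r!$ gives
\begin{equation*}
|\Delta_{\mathrm{tr}}| \le \frac{u^{K}}{m!(j + 1)!}\sum_{r \, = \, 0}^{\infty}\frac{u^{r}}{r!}\left(1 + \frac{\eta(H)}{2}(K + r)(K + r - 1)\right).
\end{equation*}

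The remaining sum can be evaluated exactly by Poisson moments. We have $\sum_{r}u^{r}/r! = e^{u}$, and the $\eta$-term is at most $\frac{\eta}{2}\sum_{r}(K + r)^{2}u^{r}/r!$. Since $\sum_{r}r\,u^{r}/r! = ue^{u}$ and $\sum_{r}r^{2}u^{r}/r! = (u^{2} + u)e^{u}$, this sum equals $e^{u}(K^{2} + 2Ku + u^{2} + u) = e^{u}((K + u)^{2} + u)$. Substituting these values gives \eqref{eq:pointwise-truncation-bound}. The only care needed is the sign bookkeeping, namely dropping the $-1$ in $(K + r - 1)$, which is harmless because it only enlarges a nonnegative quantity.

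For \eqref{eq:pointwise-truncation-factorial-bound}, the point is to absorb the polynomial factor $(K + u)^{2}\eta(H)$ into an exponential in $K$. Since $\eta(H) \ll 1$ uniformly for $H \ge 2$ (indeed $\eta(H) \le \eta(2)$ up to a constant, as $(\log H)/H$ is bounded), and $u \le U$, the brace is $\ll_{U} (K + 1)^{2} \ll 2^{K}$. Also $e^{u} \le e^{U}$. Hence
\begin{equation*}
|\Delta_{\mathrm{tr}}| \ll_{U} \frac{(2u)^{K}}{m!(j + 1)!},
\end{equation*}
so one may take the absolute constant $C_{1} = 2$. The $U$-dependence enters only through $e^{U}$ and the $u$-terms in the brace, so $C_{1}$ is absolute as required.

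I expect no real obstacle. The one point to check is that the implied constant in the second bound is uniform in $m$ and $j$. This holds because $(K + U)^{2} + U \le C_{U}\,2^{K}$ for all $K \ge 1$.
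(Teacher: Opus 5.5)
Your proposal is correct and matches the paper's proof step for step: the triangle inequality on the tail using $\eta(H) > 0$, the bound $(j + 1 + r)! \ge (j + 1)!\,r!$, the exact evaluation $\sum_{r}(K + r)^{2}u^{r}/r! = e^{u}((K + u)^{2} + u)$, and then absorbing the polynomial factor in $K$ into an exponential. Your choice $C_{1} = 2$ in place of the paper's $C_{1} = 4$ is equally valid, since $(K + 1)^{2} \ll 2^{K}$ for $K \ge 1$ and $U$ enters only the implied constant.
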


\begin{proof}
Take absolute values in the defining series, write $\ell = j + 1 + r$, and use $\binom{m + \ell}{2} \le (K + r)^{2}/2$ and $(j + 1 + r)! \ge (j + 1)!r!$. The remaining sums are
\begin{equation*}
\sum_{r \, = \, 0}^{\infty}\frac{u^{r}}{r!} = e^{u}, \qquad \sum_{r \, = \, 0}^{\infty}\frac{u^{r}}{r!}(K + r)^{2} = e^{u}\left((K + u)^{2} + u\right).
\end{equation*}
This proves \eqref{eq:pointwise-truncation-bound}, since $\eta(H) > 0$. For the last assertion, $\eta(H)$ is bounded for $H \ge 2$, and $1 + K^{2} \ll 4^{K}$ for $K \ge 1$. Absorbing the resulting factor into the base of the power proves \eqref{eq:pointwise-truncation-factorial-bound}, with $C_{1} = 4$, for example.
\end{proof}

\begin{corollary}
\label{cor:weighted-ms-residual}
After increasing the absolute constant $C$ in Theorem~\ref{thm:weighted-singular-series-asymptotic} if necessary, uniformly in the same ranges,
\begin{equation}
\label{eq:weighted-ms-residual}
\sum_{\ell \, = \, 0}^{j}(-1)^{\ell}\binom{m + \ell}{m}\left(\frac{u}{H}\right)^{m + \ell}\Delta_{\mathrm{MS}}(m + \ell; H) \ll_{U, \epsilon} 2^{-m}H^{-3/2 + \epsilon} + \frac{(Cu)^{m + j + 1}}{m!(j + 1)!}.
\end{equation}
\end{corollary}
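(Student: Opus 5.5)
The plan is to subtract the main-term identity from Theorem~\ref{thm:weighted-singular-series-asymptotic} and absorb the truncation residual using Lemma~\ref{lem:pointwise-truncation-bound}. Write $S(k; H) = G(k; H) + \Delta_{\mathrm{MS}}(k; H)$ in the definition of $\mathcal{A}(m, j; u, H)$. This splits $\mathcal{A}$ into the weighted sum of $\Delta_{\mathrm{MS}}$ appearing on the left of \eqref{eq:weighted-ms-residual}, plus the same alternating sum with $G$ in place of $S$.

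First, I would identify the $G$-part exactly. By the factorial identity $\binom{m+\ell}{m}H^{m+\ell}/(m+\ell)! = (H^{m}/m!)(H^{\ell}/\ell!)$, we get
\begin{equation*}
\sum_{\ell = 0}^{j}(-1)^{\ell}\binom{m+\ell}{m}\left(\frac{u}{H}\right)^{m+\ell}G(m+\ell; H) = \frac{u^{m}}{m!}\sum_{\ell = 0}^{j}\frac{(-u)^{\ell}}{\ell!}\left(1 - \binom{m+\ell}{2}\eta(H)\right).
\end{equation*}
By the truncated series identity from Subsection~\ref{subsec:truncation-residuals}, the right side equals $Q(m; u, H) + \Delta_{\mathrm{tr}}(m, j; u, H)$. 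This holds for all integers $m, j \ge 0$, including the range $m + \ell > H$. There $S = 0$, but $G$ is still defined, so $\Delta_{\mathrm{MS}} = -G$ and the decomposition remains an exact identity.

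Second, I would combine the pieces. The left side of \eqref{eq:weighted-ms-residual} equals
\begin{equation*}
\bigl(\mathcal{A}(m, j; u, H) - Q(m; u, H)\bigr) - \Delta_{\mathrm{tr}}(m, j; u, H).
\end{equation*}
Theorem~\ref{thm:weighted-singular-series-asymptotic} bounds the first bracket by $O_{U,\epsilon}(2^{-m}H^{-3/2+\epsilon} + (Cu)^{m+j+1}/(m!(j+1)!))$. Lemma~\ref{lem:pointwise-truncation-bound} bounds the second term by $O_U((C_1u)^{m+j+1}/(m!(j+1)!))$. Replacing $C$ by $\max\{C, C_1\}$, which is still absolute, and using monotonicity of $(Cu)^{K}$ in $C$ for $u \ge 0$, the triangle inequality gives the claimed bound uniformly in $m$ and $j$.

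There is no real obstacle. The only point needing care is that the algebraic identity for the $G$-part is exact for every $j$, with no restriction $m + j \le H$. This holds because $G$ and $\Delta_{\mathrm{MS}}$ are defined for all $k \ge 0$. The remaining requirement is that both cited bounds are uniform in $m$ and $j$, and each is stated that way.
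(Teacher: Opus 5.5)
Your proposal is correct and follows the paper's proof: the paper likewise substitutes $S = G + \Delta_{\mathrm{MS}}$ to obtain the exact identity $\mathcal{A} - Q = (\text{weighted } \Delta_{\mathrm{MS}} \text{ sum}) + \Delta_{\mathrm{tr}}$. It then applies Theorem~\ref{thm:weighted-singular-series-asymptotic} and Lemma~\ref{lem:pointwise-truncation-bound} with $C \ge \max\{C_{0}, C_{1}\}$. Your remark that the identity remains exact when $m + \ell > H$ (where $\Delta_{\mathrm{MS}} = -G$) is a correct detail that the paper leaves implicit.
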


\begin{proof}
The definitions and the truncated exponential identity from the Bonferroni sandwich give the exact relation
\begin{equation}
\label{eq:pointwise-residual-decomposition}
\mathcal{A}(m, j; u, H) - Q(m; u, H) = \sum_{\ell \, = \, 0}^{j}(-1)^{\ell}\binom{m + \ell}{m}\left(\frac{u}{H}\right)^{m + \ell}\Delta_{\mathrm{MS}}(m + \ell; H) + \Delta_{\mathrm{tr}}(m, j; u, H).
\end{equation}
Apply \eqref{eq:weighted-singular-series-asymptotic} and \eqref{eq:pointwise-truncation-factorial-bound}, taking $C \ge \max\{C_{0}, C_{1}\}$.
\end{proof}

The identity \eqref{eq:pointwise-residual-decomposition} is useful because it preserves cancellation. The left side of \eqref{eq:weighted-ms-residual} is a signed sum. We have not estimated it by adding the absolute values of fixed-$k$ Montgomery--Soundararajan error bounds. Instead, the finite sieve represents the whole sum, its second and fourth moments control the expansion, and the uniform upper bound controls the tail. This distinction is what permits $j$ to grow.

\subsection{Integration over the starting points}
\label{subsec:integrated-singular-series-residuals}

We finish by connecting the pointwise estimates to the residuals in the sandwich. The result is uniform in the translation $M$ and includes $M = 0$. The restriction on $H$ below is only an upper bound; no lower bound on $H/\log N$ is needed here.

\begin{proposition}
\label{prop:integrated-singular-series-residuals}
Fix an integer $m \ge 0$ and real numbers $U, \epsilon > 0$. There is an absolute constant $C_{2} \ge 1$ such that, uniformly for integers $M \ge 0$, $N \ge 2$, $2 \le H \le U\log N$ and $j \ge 0$,
\begin{align}
\label{eq:integrated-singular-series-residual-bound}
\begin{split}
& |R_{\mathrm{MS}}(m, j; H, M, N)| + |R_{\mathrm{tr}}(m, j; H, M, N)|  \\
& \qquad \qquad \ll_{m, U, \epsilon} 
H^{-3/2 + \epsilon} + \frac{(C_{2}U)^{m + j + 1}}{m!(j + 1)!} + \frac{1 + (j + 1)(C_{2}H)^{m + j}}{\sqrt{N}}.
\end{split}
\end{align}
The implied constant is independent of $M$ and $j$.
\end{proposition}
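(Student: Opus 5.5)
Write $u_t := H/\log t$. The plan is to put both weighted residuals under the integral in $t$ with the \emph{same} weights as in the pointwise results, and then split the range of integration according to whether $u_t$ is bounded.

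\textbf{Step 1: rewrite the weights pointwise.} Since $J(k; M, N) = N^{-1}\int_{\max\{2,M\}}^{M+N}(\log t)^{-k}\,dt$ and $(\log t)^{-k} = (u_t/H)^{k}$, the finite sum defining $R_{\mathrm{MS}}$ can be moved inside the integral. This gives
\begin{equation*}
R_{\mathrm{MS}}(m, j) = \frac{1}{N}\int_{\max\{2, M\}}^{M + N}\sum_{\ell \, = \, 0}^{j}(-1)^{\ell}\binom{m + \ell}{m}\left(\frac{u_t}{H}\right)^{m + \ell}\Delta_{\mathrm{MS}}(m + \ell; H)\,dt.
\end{equation*}
The integrand is exactly the left side of \eqref{eq:weighted-ms-residual} at $u = u_t$. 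By definition, $R_{\mathrm{tr}}$ is the average of $\Delta_{\mathrm{tr}}(m, j; u_t, H)$ over the same range.

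\textbf{Step 2: split at $t = \sqrt{N}$.}

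On $t \ge \sqrt{N}$ we have $\log t \ge \tfrac12\log N$, so $u_t \le 2H/\log N \le 2U$. Here I would apply Corollary~\ref{cor:weighted-ms-residual} and Lemma~\ref{lem:pointwise-truncation-bound} with $U$ replaced by $2U$. This bounds both integrands by
\begin{equation*}
\ll_{U,\epsilon} H^{-3/2+\epsilon} + \frac{(2CU)^{m+j+1}}{m!(j+1)!},
\end{equation*}
uniformly in $t$, $M$ and $j$. The interval has length at most $N$, so after dividing by $N$ this part contributes the first two terms of \eqref{eq:integrated-singular-series-residual-bound}, with $C_2 \ge 2\max\{C, C_1\}$.

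The range $\max\{2, M\} \le t < \sqrt{N}$ is nonempty only when $M < \sqrt{N}$, and its length is at most $\sqrt{N}$. It therefore suffices to bound both integrands there by $O_m(1 + (j+1)(C_2H)^{m+j})$.

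\textbf{Step 3: crude bounds on the short range.} This is the only delicate point, since $u_t$ can be as large as $H/\log 2$ and the pointwise theorem is unavailable there. I use $(\log t)^{-k} \le (\log 2)^{-k} \le 2^{k}$ and treat each term separately.

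\emph{The $R_{\mathrm{MS}}$ integrand.} Write $\Delta_{\mathrm{MS}} = S - G$.
\begin{itemize}
\item Lemma~\ref{lem:uniform-survivor-bound} gives $0 \le S(k;H) \le (C_0H)^k/k!$.
\item Since $\eta(H)$ is bounded for $H \ge 2$, we have $|G(k;H)| \le (H^k/k!)(1 + k^2) \ll (4H)^k/k!$.
\end{itemize}
Hence each term of the sum is at most $\binom{m+\ell}{m}2^{m+\ell}(4C_0H)^{m+\ell}/(m+\ell)! \le (8C_0H)^{m+\ell}$. Since $8C_0H \ge 1$, the sum over $0 \le \ell \le j$ is at most $(j+1)(C_2H)^{m+j}$.

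\emph{The $R_{\mathrm{tr}}$ integrand.} The identity \eqref{eq:truncated-main-term-identity} gives
\begin{equation*}
\Delta_{\mathrm{tr}}(m, j; u, H) = \frac{u^m}{m!}\sum_{\ell \, = \, 0}^{j}\frac{(-u)^\ell}{\ell!}\left(1 - \binom{m+\ell}{2}\eta(H)\right) - Q(m; u, H).
\end{equation*}
With $u_t \le 2H$, the finite sum is bounded by $(j+1)(C_2H)^{m+j}$, exactly as above. For the other term, $|Q(m;u,H)| \ll_m \sup_{u\ge0} e^{-u}(u^m + u^{m+2}) \ll_m 1$ for all $u \ge 0$; this is the source of the ``$1$'' in the last term.

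Dividing the short-range integral by $N$ gives $O_m\bigl((1 + (j+1)(C_2H)^{m+j})/\sqrt{N}\bigr)$. Adding the two ranges completes the proof, and every constant used is independent of $M$ and $j$.
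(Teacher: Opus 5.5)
Your proof is correct and follows the paper's argument essentially step for step. You move the finite sum inside the $J$-integral, split at $t=\sqrt N$, and apply Corollary~\ref{cor:weighted-ms-residual} and Lemma~\ref{lem:pointwise-truncation-bound} where $u\le 2U$; on the short range you bound the finite weighted sums crudely via $S(k;H)+|G(k;H)|\ll (cH)^{k}/k!$ together with $|Q(m;u,H)|\ll_m 1$. The differences are only cosmetic, such as using $(\log t)^{-1}\le 2$ in place of $u\le H/\log 2$. Small constant slack, for example merging $(C_0H)^k+(4H)^k$ into one power, is absorbed by enlarging the absolute constant $C_2$.
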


\begin{corollary}
\label{cor:integrated-growing-truncation}
Fix $m \ge 0$ and $U, \delta > 0$. Suppose that $H \to \infty$, $H \le U\log N$, and let $L$ be the least even integer greater than or equal to $(1 + \delta)\log H/\log\log H$. Uniformly for $M \ge 0$ and $j \in \{L, L + 1\}$,
\begin{equation*}
|R_{\mathrm{MS}}(m, j; H, M, N)| + |R_{\mathrm{tr}}(m, j; H, M, N)| \ll_{m, U, \delta, \epsilon} H^{-3/2 + \epsilon} + H^{-1 - \delta + o(1)}.
\end{equation*}
In particular, both residuals are $o(1/H)$ if $0 < \epsilon < 1/2$.
\end{corollary}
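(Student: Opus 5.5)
The plan is to deduce Corollary~\ref{cor:integrated-growing-truncation} from Proposition~\ref{prop:integrated-singular-series-residuals} by bounding its three error terms separately. Apply \eqref{eq:integrated-singular-series-residual-bound} with $j \in \{L, L + 1\}$; the implied constant there is independent of $M$ and $j$, so uniformity in $M$ and over the two values of $j$ is inherited. The first term $H^{-3/2 + \epsilon}$ needs no further work.

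For the second term, proceed exactly as in the proof of Corollary~\ref{cor:singular-series-growing-truncation}. Stirling's formula gives
\begin{equation*}
\log\left(\frac{(C_{2}U)^{m + j + 1}}{m!(j + 1)!}\right) = -(j + 1)\log(j + 1) + O_{m, U}(j + 1).
\end{equation*}
With $j + 1 \ge L \ge (1 + \delta)\log H/\log\log H$, we have $\log(j + 1) = \log\log H - \log\log\log H + O(1)$. Since $j \le L + 1 \ll \log H/\log\log H$, it follows that $(j + 1)\log(j + 1) \ge (1 + \delta)\log H\,(1 + o(1))$ and $O(j + 1) = o(\log H)$. Hence the term is $H^{-1 - \delta + o(1)}$.

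For the third term, use $N \ge e^{H/U}$, so $N^{-1/2} \le e^{-H/(2U)}$. Meanwhile
\begin{equation*}
(j + 1)(C_{2}H)^{m + j} = \exp\left(O_{m}\left((\log H/\log\log H)\log H\right)\right) = \exp\left(O((\log H)^{2})\right).
\end{equation*}
This is negligible against $e^{-H/(2U)}$, so the whole term is $O(e^{-H/(4U)})$, which is far smaller than $H^{-3/2}$ for large $H$. Combining the three bounds gives the claim. Choosing $\epsilon < 1/2$ and $\delta > 0$ makes both exponents exceed $1$, so both residuals are $o(1/H)$.

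The only delicate point is the third term, namely checking that the polynomial growth $H^{m + j}$ with $j \asymp \log H/\log\log H$ stays subexponential in $H$. This holds because the exponent is only $(\log H)^{2}$. Everything else is routine bookkeeping of $o(1)$ terms.
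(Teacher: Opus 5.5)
Your proposal is correct and follows the paper's own proof essentially step for step. You apply Proposition~\ref{prop:integrated-singular-series-residuals} at $j \in \{L, L+1\}$, treat the factorial term by Stirling as in Corollary~\ref{cor:singular-series-growing-truncation}, and dispose of the last term using $\log N \ge H/U$ against a numerator of size $e^{O((\log H)^{2})} = e^{o(H)}$; the only cosmetic slip is that the implied constant in $j \ll \log H/\log\log H$ depends on $\delta$ as well as on $m$.
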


\begin{proof}[Proof of Corollary~\ref{cor:integrated-growing-truncation}]
The factorial term in \eqref{eq:integrated-singular-series-residual-bound} is $H^{-1 - \delta + o(1)}$, as before. The logarithm of its final numerator is $O_{m, \delta}((\log H)^{2}/\log\log H) = o(H)$. Since $\log N \ge H/U$, the final term is bounded by $\exp(-H/(2U) + o(H))$, which is smaller than every fixed negative power of $H$. Apply Proposition~\ref{prop:integrated-singular-series-residuals}.
\end{proof}

\begin{proof}[Proof of Proposition~\ref{prop:integrated-singular-series-residuals}]
Write $a_{0} := \max\{2, M\}$. By the definition of $J$, the residual $R_{\mathrm{MS}}$ is the integral, divided by $N$, of the left side of \eqref{eq:weighted-ms-residual}, with $u = H/\log t$, over $a_{0} \le t \le M + N$. The corresponding statement for $R_{\mathrm{tr}}$ is its definition. Split these integrals at $t = \sqrt{N}$, omitting either piece if it is empty.

For $t \ge \sqrt{N}$, we have $u \le 2U$. The interval of integration has length at most $N$, so \eqref{eq:weighted-ms-residual} and \eqref{eq:pointwise-truncation-factorial-bound} give the first two terms in \eqref{eq:integrated-singular-series-residual-bound}, with $C_{2} \ge 2C$.

For $2 \le t < \sqrt{N}$, we use finite sums, rather than an absolute bound for an exponential series at the potentially large parameter $u$. There is an absolute constant $C_{3} \ge 1$ such that, for every $k \ge 0$,
\begin{equation*}
S(k; H) + |G(k; H)| \ll \frac{(C_{3}H)^{k}}{k!}.
\end{equation*}
For $S$ this follows from \eqref{eq:uniform-singular-series-upper-bound}. For $G$ it follows from its definition, the boundedness of $\eta(H)$, and $1 + k^{2} \ll 4^{k}$. Since $u \le H/\log 2$, taking absolute values in the finite weighted sum for $\Delta_{\mathrm{MS}} = S - G$ gives
\begin{equation*}
\left|\sum_{\ell \, = \, 0}^{j}(-1)^{\ell}\binom{m + \ell}{m}\left(\frac{u}{H}\right)^{m + \ell}\Delta_{\mathrm{MS}}(m + \ell; H)\right| \ll (j + 1)(C_{2}H)^{m + j},
\end{equation*}
provided $C_{2} \ge C_{3}/\log 2$ and $C_{2} \ge 1$.

The truncated exponential identity also writes $\Delta_{\mathrm{tr}}$ as the finite weighted sum involving $G$, minus $Q(m; u, H)$. The same estimate bounds that finite sum. Moreover, for fixed $m$, the expression $e^{-u}u^{m}(1 + (m + u)^{2})$ is bounded on $u \ge 0$, so $|Q(m; u, H)| \ll_{m} 1$. Thus
\begin{equation*}
|\Delta_{\mathrm{tr}}(m, j; u, H)| \ll_{m} 1 + (j + 1)(C_{2}H)^{m + j}.
\end{equation*}
The small-$t$ part has length at most $\sqrt{N}$. Dividing its contribution by $N$ gives the last term in \eqref{eq:integrated-singular-series-residual-bound} and completes the proof.
\end{proof}

Thus the Montgomery--Soundararajan and truncation residuals can both be made $o(1/H)$ unconditionally in the range relevant to the sandwich. The Hardy--Littlewood residual and the replacement of the integrated main term by $Q(m; \mu, H)$ are separate questions.

\section{Averaging the logarithmic density}
\label{sec:density-averaging}

The Bonferroni sandwich (Proposition \ref{prop:inclusion-exclusion-sandwich}) compares $P(m; H, M, N)$, the proportion of intervals containing exactly $m$ primes, with the averaged expression $F(m; H, M, N)$. We now replace $F(m; H, M, N)$ by $Q(m; \mu, H)$, evaluating the corrected Poisson expression at the averaged density. The error is smaller than $1/H$ in the range needed below, uniformly in $m$.

Recall from Section~\ref{sec:setup-and-bookkeeping} that
\begin{equation*}
Q(m; u, H) = \frac{e^{-u}u^{m}}{m!}\left\{1 - \frac{\eta(H)}{2}\left((m - u)^{2} - m\right)\right\}, \qquad \eta(H) = \frac{\log H + \log(2\pi) + \gamma - 1}{H},
\end{equation*}
with values at $u = 0$ understood by continuity. Its average over the varying logarithmic density is
\begin{equation*}
F(m; H, M, N) = \frac{1}{N}\int_{\max\{2, M\}}^{M + N}Q\left(m; \frac{H}{\log t}, H\right)dt.
\end{equation*}
The parameter at which we shall evaluate $Q$ is
\begin{equation*}
\mu = \mu(H, M, N) = \frac{1}{N}\int_{\max\{2, M\}}^{M + N}\frac{H}{\log t}dt,
\end{equation*}
and the signed residual to be estimated is
\begin{equation*}
\Delta_{\mathrm{av}}(m; H, M, N) = F(m; H, M, N) - Q(m; \mu, H).
\end{equation*}
The exact choice of $\mu$ matters: Taylor expansion of $Q(m; u, H)$ about $u = \mu$ cancels the integrated linear term, apart from a small correction when $M = 0$ or $1$. The remaining error is controlled by the mean square variation of $H/\log t$ from $\mu$.

\begin{proposition}[Density averaging]
\label{prop:density-averaging}
Fix real numbers $C, U > 0$. Uniformly for integers $N \ge 2$, $0 \le M \le CN$, $1 \le H \le U\log N$ and $m \ge 0$,
\begin{equation}
\label{eq:density-averaging-bound}
\left|\Delta_{\mathrm{av}}(m; H, M, N)\right| \ll_{C, U} \frac{H^{2}}{(\log N)^{4}} + \frac{1}{N}.
\end{equation}
The implied constant is independent of $m$.
\end{proposition}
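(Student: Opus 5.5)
Write $a_0 := \max\{2, M\}$ and $u(t) := H/\log t$. The plan is to Taylor-expand $Q(m; u, H)$ in $u$ about $\mu$ to second order and average over $a_0 \le t \le M + N$. The integral $\frac{1}{N}\int_{a_0}^{M+N}(u(t) - \mu)\,dt$ equals $-\mu(a_0 - M)/N$. This is zero unless $M \in \{0, 1\}$, in which case it is $O(\mu/N)$.

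First, I need the bound $\sup_{u \ge 0}|\partial_u^{r} Q(m; u, H)| \ll 1$ for $r = 1, 2$, uniformly in $m$ and $H$. Write $Q = p_m(u) - \frac{\eta}{2}u^{2}p_m''(u)$, using the identity from the proof of Theorem~\ref{thm:weighted-singular-series-asymptotic}. We have $p_m'(u) = p_{m-1}(u) - p_m(u)$, so each derivative of $p_m$ is a signed sum of at most $2^{r}$ Poisson masses, each bounded by $1$. The second term needs $|u^{2}p_m^{(r+2)}|$ and related quantities with powers of $u$. Here $u^{s}p_{m-i}(u)$ is not uniformly bounded in $u$, but $\eta \ll 1$. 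So I would use this bound only on a range of moderate $u$, and handle large $u$ separately.

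\textbf{Main range.} For $t \ge \sqrt N$, one has $u(t) \le 2U$, and $\mu \ll U$ when $N$ is large. On this range $\sup_{0 \le u \le 3U}|\partial_u^{2}Q| \ll_U 1$ uniformly in $m$, since $u^{s}p_{m-i}(u) \le (3U)^{s}$. Taylor's theorem then gives
\begin{equation*}
Q(m; u(t)) = Q(m; \mu) + \partial_u Q(m; \mu)(u(t) - \mu) + O_U\left((u(t) - \mu)^{2}\right).
\end{equation*}

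\textbf{Small $t$.} For $t < \sqrt N$, I would use $|Q| \ll 1 + \eta u^{2}\sup|p_m''| \ll 1 + u^{2}$. Since $u \le H/\log 2$, this range contributes $O(H^{2}/\sqrt N) = O(1/N)$ after division by $N$, because $H \le U\log N$ is tiny. The linear term over the small range is handled the same way, since $\partial_u Q(m; \mu)$ is bounded.

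\textbf{Linear term.} Its total average is $\partial_u Q(\mu) \cdot (-\mu(a_0 - M)/N) = O_U(1/N)$.

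\textbf{Quadratic term (the main obstacle).} This is the mean-square variation
\begin{equation*}
\frac{1}{N}\int_{\sqrt N}^{M+N}\left(\frac{H}{\log t} - \mu\right)^{2} dt,
\end{equation*}
which must be shown to be $\ll_C H^{2}/(\log N)^{4}$. I would compare with $H/\log N$. For $t \in [N/(\log N)^{2}, (C+1)N]$ we have $\log t = \log N + O(\log\log N)$, so $H/\log t - H/\log N \ll H\,|\log(t/N)|/(\log N)^{2}$. The average of $|\log(t/N)|^{2}$ over $t \in (M, M+N]$ with $M \le CN$ is $O_C(1)$, because near $t = 0$ the function $\log^{2}$ is integrable: $\frac{1}{N}\int_0^N \log^{2}(t/N)\,dt = 2$. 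That integrability is the point of the estimate.

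The region $\sqrt N \le t < N/(\log N)^{2}$ has measure at most $N/(\log N)^{2}$, and there $u \le 2U$. Its contribution is only $O(U^{2}/(\log N)^{2})$, which is too large. So I would instead use the direct bound $H/\log t - H/\log N \le 2H\log(N/t)/(\log N)^{2}$, valid because $\log t \ge \frac12\log N$. This gives $H/\log t - H/\log N \ll H|\log(t/N)|/(\log N)^{2}$ on all of $t \ge \sqrt N$, and the $\log^{2}$ integrability argument above then covers the whole range. Finally, $\mu - H/\log N$ is controlled the same way, since $\mu$ is the average of $u(t)$ and the small-$t$ part contributes $O(H/\sqrt N)$. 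Expanding the square then yields $\ll H^{2}/(\log N)^{4} + 1/N$.
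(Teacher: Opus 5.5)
Your proposal follows the paper's proof essentially step for step: Taylor expansion of $Q(m;u,H)$ about $\mu$ with the linear term integrated exactly, and a split at $t=\sqrt N$ with the crude bound $|Q|\ll 1+u^{2}$ below it and uniform derivative bounds (via $p_m'=p_{m-1}-p_m$) on the bounded $u$-range above it; the mean-square variation is controlled through $|1/\log t-1/\log N|\le 2|\log(t/N)|/(\log N)^{2}$ and the integrability of $(\log s)^{2}$ at $0$. There are three small corrections:
\begin{itemize}
\item The small-$t$ contribution $H^{2}/\sqrt N$ is not $O(1/N)$; it is far larger. It is, however, $\ll H^{2}/(\log N)^{4}$, since $H\ge 1$ and $(\log N)^{4}\ll\sqrt N$, and this is how the paper absorbs it.
\item The linear-term integral has the opposite sign to the one you wrote: $\frac1N\int_{a_0}^{M+N}(u(t)-\mu)\,dt=+\mu(a_0-M)/N$. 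This does not affect the bound.
\item The constant term contributes a further $-\frac{a_0-M}{N}\,Q(m;\mu,H)=O(1/N)$, which you should record alongside the linear one.
\end{itemize}
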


Before proving the proposition, we record its consequence at the precision needed for the Bonferroni sandwich.

\begin{corollary}
\label{cor:density-averaging-negligible}
As $N \to \infty$, uniformly over the ranges in Proposition~\ref{prop:density-averaging},
\begin{equation*}
\Delta_{\mathrm{av}}(m; H, M, N) = o(1/H).
\end{equation*}
In particular, when $H \asymp \log N$, the residual is $O_{C, U}((\log N)^{-2})$, uniformly for $m \ge 0$.
\end{corollary}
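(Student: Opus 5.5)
The corollary should follow directly from Proposition~\ref{prop:density-averaging}; the only task is to compare its bound with $1/H$ across the stated ranges, which include $1 \le H \le U\log N$ with no lower bound on $H$. By \eqref{eq:density-averaging-bound}, it suffices to show that
\begin{equation*}
H\left(\frac{H^{2}}{(\log N)^{4}} + \frac{1}{N}\right) = \frac{H^{3}}{(\log N)^{4}} + \frac{H}{N} \to 0
\end{equation*}
uniformly as $N \to \infty$.

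Using $H \le U\log N$, the first term is at most $U^{3}/\log N$ and the second at most $U\log N/N$. Both tend to zero uniformly in $M$, $m$ and $H$ over the ranges of the proposition. The implied constant in \eqref{eq:density-averaging-bound} depends only on $C$ and $U$, so the supremum of $|H\,\Delta_{\mathrm{av}}|$ over these ranges tends to zero. This is the uniform $o(1/H)$ statement.

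For the second assertion, suppose in addition that $H \asymp \log N$. Then $H^{2}/(\log N)^{4} \ll_{U} (\log N)^{-2}$. Since $N \ge 2$, we also have $1/N \ll (\log N)^{-2}$. Together these give $O_{C,U}((\log N)^{-2})$, uniformly in $m$. The upper bound $H \le U\log N$ is the only part of $H \asymp \log N$ actually used.

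No step here is a real obstacle; all the substance lies in Proposition~\ref{prop:density-averaging} itself. The one point to check is that the bound degrades as $H$ decreases, which is why the worst case $H = U\log N$ is the one to use in the first term, where that bound still suffices.
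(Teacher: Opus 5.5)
Your proof is correct and is essentially the paper's own argument: multiply the bound of Proposition~\ref{prop:density-averaging} by $H$, use $H \le U\log N$ to get $U^{3}/\log N + U\log N/N \to 0$, and note that the second assertion also needs only $H \le U\log N$. One small slip is in your closing remark: $H^{3}/(\log N)^{4} + H/N$ grows with $H$, so the bound degrades as $H$ \emph{increases}, which is exactly why $H = U\log N$ is the worst case.
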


\begin{proof}[Proof of Corollary~\ref{cor:density-averaging-negligible}]
Multiplying the right side of \eqref{eq:density-averaging-bound} by $H$ gives
\begin{equation*}
\frac{H^{3}}{(\log N)^{4}} + \frac{H}{N} \le \frac{U^{3}}{\log N} + \frac{U\log N}{N} \to 0.
\end{equation*}
The second assertion follows from $H \le U\log N$.
\end{proof}

The uniformity in $m$ concerns an absolute error. When $m$ is large, the Poisson main term and the arithmetic correction in $Q(m; \mu, H)$ may both be smaller in magnitude than this error bound. An estimate that distinguishes these terms requires further restrictions on $m$.

The proof uses Taylor expansion of $Q(m; u, H)$ about $u = \mu$, with the remainder controlled by its second derivative. We give the derivative and integral estimates explicitly; no probabilistic limit theorem is needed.

\subsection{Uniform bounds for the corrected Poisson expression}
\label{subsec:averaging-uniform-derivatives}

For integers $m \ge 0$ and real $u \ge 0$, write
\begin{equation*}
p_{m}(u) := \frac{e^{-u}u^{m}}{m!},
\end{equation*}
again using continuity at $u = 0$, and put $p_{m}(u) = 0$ for negative integers $m$. These functions are the Poisson probabilities. Their derivatives are finite differences of the same functions, which allows us to avoid bounds involving powers of $m$. All derivatives below are with respect to $u$.

\begin{lemma}
\label{lem:averaging-uniform-derivatives}
For all integers $m, r \ge 0$ and real $u \ge 0$,
\begin{equation}
\label{eq:averaging-poisson-derivatives}
p_{m}^{(r)}(u) = \sum_{i \, = \, 0}^{r}(-1)^{r - i}\binom{r}{i}p_{m - i}(u), \qquad \left|p_{m}^{(r)}(u)\right| \le 2^{r}.
\end{equation}
For $H \ge 1$, the corrected Poisson approximation satisfies
\begin{equation}
\label{eq:averaging-q-derivative-form}
Q(m; u, H) = p_{m}(u) - \frac{\eta(H)}{2}u^{2}p_{m}''(u)
\end{equation}
and
\begin{equation}
\label{eq:averaging-q-derivative-bound}
\left|\frac{\partial^{r}}{\partial u^{r}}Q(m; u, H)\right| \le 2^{r}\left\{1 + \frac{|\eta(H)|}{2}\left(4u^{2} + 4ru + r(r - 1)\right)\right\}.
\end{equation}
Consequently, for fixed $V > 0$ and integer $r \ge 0$, these derivatives are $O_{V, r}(1)$ uniformly for $0 \le u \le V$, $H \ge 1$ and $m \ge 0$.
\end{lemma}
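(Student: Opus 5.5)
The plan has three steps: an inductive derivative formula for $p_{m}$, a direct algebraic identity for $Q$, and a Leibniz-rule bound for the derivatives of $Q$.

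\emph{Step 1: derivatives of $p_{m}$.} I first establish the single-step identity $p_{m}'(u) = p_{m-1}(u) - p_{m}(u)$ for all $m \ge 0$ and $u \ge 0$. For $m \ge 1$ this follows by differentiating $e^{-u}u^{m}/m!$. For $m = 0$ it follows from $p_{-1} = 0$. At $u = 0$ it holds by continuity, since every $p_{m}$ extends to an entire function.

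Writing $D$ for differentiation and $E$ for the shift $p_{m} \mapsto p_{m-1}$, this identity reads $D = E - I$ on the family $\{p_{m}\}$. The operators $E$ and $I$ commute, so the binomial theorem gives $D^{r} = (E - I)^{r}$. Equivalently, one can argue by induction on $r$ using Pascal's rule. This yields the stated formula
\begin{equation*}
p_{m}^{(r)}(u) = \sum_{i = 0}^{r}(-1)^{r - i}\binom{r}{i}p_{m - i}(u).
\end{equation*}
Each $p_{m-i}(u)$ lies in $[0, 1]$, being a Poisson probability or zero. The triangle inequality then gives $|p_{m}^{(r)}(u)| \le \sum_{i}\binom{r}{i} = 2^{r}$.

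\emph{Step 2: the formula $Q = p_{m} - \tfrac{\eta(H)}{2}u^{2}p_{m}''$.} For $u > 0$ and $m \ge 0$, a direct computation gives
\begin{equation*}
p_{m}''(u) = p_{m}(u)\left((m/u - 1)^{2} - m/u^{2}\right) = p_{m}(u)\,\frac{(m - u)^{2} - m}{u^{2}}.
\end{equation*}
This is the same computation already used in the proof of Theorem~\ref{thm:weighted-singular-series-asymptotic}. Multiplying by $u^{2}$ and comparing with \eqref{eq:corrected-poisson-expression} gives the claimed form of $Q$. Both sides are continuous at $u = 0$, so the identity extends there.

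\emph{Step 3: derivatives of $Q$.} I apply the Leibniz rule to $u^{2}p_{m}''$. The function $u^{2}$ has only three nonzero derivatives, so
\begin{equation*}
\frac{\partial^{r}}{\partial u^{r}}\left(u^{2}p_{m}''\right) = u^{2}p_{m}^{(r+2)} + 2ru\,p_{m}^{(r+1)} + r(r-1)\,p_{m}^{(r)}.
\end{equation*}
Inserting the bounds $|p_{m}^{(s)}| \le 2^{s}$ from Step 1, the three terms are at most $2^{r}\cdot 4u^{2}$, $2^{r}\cdot 4ru$ and $2^{r}\cdot r(r-1)$ respectively. Adding the bound $2^{r}$ for $p_{m}^{(r)}$ itself gives exactly \eqref{eq:averaging-q-derivative-bound}.

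\emph{The consequence.} Since $\eta(H)$ is bounded for $H \ge 1$, the right side of \eqref{eq:averaging-q-derivative-bound} is $O_{V,r}(1)$ for $0 \le u \le V$. Indeed $\eta(1) = \log(2\pi) + \gamma - 1$, and $\eta(H) \to 0$ as $H \to \infty$. None of the bounds depend on $m$.

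I expect no real obstacle in this argument. The only point requiring care is the endpoint $u = 0$ for small $m$, together with the convention $p_{m} = 0$ for negative $m$, and continuity disposes of both.
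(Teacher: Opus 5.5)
Your proof is correct and follows the paper's argument essentially step for step: the recursion $p_{m}' = p_{m-1} - p_{m}$ is iterated (your operator form $D^{r} = (E - I)^{r}$ is just a compact version of the paper's induction on $r$), and the bound uses $0 \le p_{m} \le 1$. The rest also matches: a direct computation of $u^{2}p_{m}''$ with continuity at $u = 0$, and Leibniz's rule for $u^{2}p_{m}''$. The only detail left implicit, here as in the paper, is that the iteration also uses the recursion at negative indices, where it holds trivially because every term vanishes.
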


\begin{proof}
Differentiation gives $p_{m}' = p_{m - 1} - p_{m}$, including when $m = 0$. Induction on $r$ proves the identity in \eqref{eq:averaging-poisson-derivatives}. The exponential series gives
\begin{equation*}
\sum_{m \, = \, 0}^{\infty}p_{m}(u) = 1,
\end{equation*}
so $0 \le p_{m}(u) \le 1$ for every integer $m$. Taking absolute values in the finite-difference identity proves the derivative bound.

For $u > 0$, direct differentiation also gives
\begin{equation*}
u^{2}p_{m}''(u) = \left(m(m - 1) - 2mu + u^{2}\right)p_{m}(u) = \left((m - u)^{2} - m\right)p_{m}(u).
\end{equation*}
Substitution into the definition of $Q$ proves \eqref{eq:averaging-q-derivative-form}, and continuity extends it to $u = 0$. For $r \ge 2$, Leibniz's rule gives
\begin{equation*}
\frac{d^{r}}{du^{r}}\left(u^{2}p_{m}''(u)\right) = u^{2}p_{m}^{(r + 2)}(u) + 2ru\,p_{m}^{(r + 1)}(u) + r(r - 1)p_{m}^{(r)}(u).
\end{equation*}
For $r = 0$ or $1$, the same formula holds with the terms having zero coefficients omitted. Applying \eqref{eq:averaging-poisson-derivatives} to each term proves \eqref{eq:averaging-q-derivative-bound}. Finally, $\eta(H)$ is bounded for $H \ge 1$, which proves the assertion on bounded $u$-intervals.
\end{proof}

We shall use the lemma with $r = 0, 1, 2$. In particular, the case $r = 0$ gives a bound valid even when $u$ is large:
\begin{equation}
\label{eq:averaging-q-global-bound}
|Q(m; u, H)| \le 1 + 2|\eta(H)|u^{2} \ll 1 + u^{2}.
\end{equation}
The implied constant is absolute. This crude bound will suffice on the short portion of the integral where $H/\log t$ need not be bounded.

\subsection{The averaging residual}
\label{subsec:averaging-residual-estimate}

We first estimate the variation of the logarithmic density. In the rest of this subsection write
\begin{equation*}
I := [\max\{2, M\}, M + N], \qquad u(t) := \frac{H}{\log t}, \qquad a := \frac{H}{\log N}, \qquad d := \frac{\max\{2 - M, 0\}}{N}.
\end{equation*}
Thus the interval $I$ has length $N(1 - d)$. We retain the normalization by $N$ used in Section \ref{sec:setup-and-bookkeeping} (bookkeeping). In particular,
\begin{equation}
\label{eq:averaging-mass-identities}
\frac{1}{N}\int_{I}dt = 1 - d, \qquad \frac{1}{N}\int_{I}u(t)dt = \mu, \qquad \frac{1}{N}\int_{I}(u(t) - \mu)dt = d\mu.
\end{equation}
The last integral vanishes for $M \ge 2$, but not necessarily for $M = 0$ or $1$.

\begin{lemma}
\label{lem:logarithmic-density-variation}
Fix $C > 0$. For integers $N \ge 2$, $0 \le M \le CN$ and $H \ge 1$,
\begin{equation}
\label{eq:averaging-density-comparison}
\frac{1}{N}\int_{I}(u(t) - a)^{2}dt \ll_{C} \frac{H^{2}}{(\log N)^{4}},
\end{equation}
and
\begin{equation}
\label{eq:averaging-mean-comparison}
|\mu - a| \ll_{C} \frac{H}{(\log N)^{2}}, \qquad 0 \le \mu \ll_{C} \frac{H}{\log N}.
\end{equation}
Consequently,
\begin{equation}
\label{eq:averaging-centered-square}
\frac{1}{N}\int_{I}(u(t) - \mu)^{2}dt \ll_{C} \frac{H^{2}}{(\log N)^{4}}.
\end{equation}
No upper bound on $H$ is needed in this lemma.
\end{lemma}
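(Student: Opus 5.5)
The core estimate is \eqref{eq:averaging-density-comparison}. I would write $u(t) - a = H(\log N - \log t)/(\log t\log N)$, so that
\begin{equation*}
(u(t) - a)^{2} = \frac{H^{2}(\log(N/t))^{2}}{(\log t)^{2}(\log N)^{2}}.
\end{equation*}
Then I would split $I$ at $t = \sqrt{N}$.

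On $t \ge \sqrt{N}$ we have $\log t \ge \tfrac12\log N$. The integrand is therefore at most $4H^{2}(\log(N/t))^{2}/(\log N)^{4}$. Since $t \le M + N \le (C + 1)N$, we get $|\log(N/t)| \le \log(C + 1)$ when $t \ge N$. When $\sqrt{N} \le t \le N$, the substitution $t = Ns$ gives
\begin{equation*}
\frac{1}{N}\int_{0}^{N}(\log(N/t))^{2}\,dt = \int_{0}^{1}(\log s)^{2}\,ds = 2.
\end{equation*}
So this piece contributes $\ll_{C} H^{2}/(\log N)^{4}$.

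On $2 \le t < \sqrt{N}$ I would bound crudely: $u(t) \le H/\log 2$ and $a \le H$, so the integrand is $\ll H^{2}$. The interval has length at most $\sqrt{N}$, giving a contribution $\ll H^{2}N^{-1/2}$. This is $\ll H^{2}/(\log N)^{4}$ because $(\log N)^{4} \ll \sqrt{N}$. (If $M \ge \sqrt{N}$, this piece is empty.)

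Next, \eqref{eq:averaging-mean-comparison}. By the mass identities \eqref{eq:averaging-mass-identities},
\begin{equation*}
\mu - a = \frac{1}{N}\int_{I}(u(t) - a)\,dt - da.
\end{equation*}
Cauchy--Schwarz together with $|I| \le N$ bounds the integral by the square root of \eqref{eq:averaging-density-comparison}, namely $\ll_{C} H/(\log N)^{2}$. The term $da$ is at most $(2/N)(H/\log N) \ll H/(\log N)^{2}$. The bound $0 \le \mu \ll_C H/\log N$ then follows from $a = H/\log N$ and $\log N \gg 1$; the nonnegativity is clear from the definition.

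Finally, \eqref{eq:averaging-centered-square}. I would write $u - \mu = (u - a) + (a - \mu)$ and use $(x + y)^{2} \le 2x^{2} + 2y^{2}$. The first part is bounded by \eqref{eq:averaging-density-comparison}, and the second by $(1 - d)|\mu - a|^{2} \ll_{C} H^{2}/(\log N)^{4}$.

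The only delicate point is the small-$t$ region, where $\log t$ is not comparable to $\log N$. The split at $\sqrt{N}$ with the trivial bound handles it. No upper bound on $H$ enters anywhere, since every estimate is homogeneous of degree two in $H$.
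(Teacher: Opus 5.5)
Your proof is correct and follows the paper's argument essentially step for step. The paper likewise splits at $\sqrt{N}$, using a trivial bound below and $\log t \ge \tfrac12\log N$ with the substitution $s = t/N$ above; it then applies Cauchy--Schwarz with the mass identities to bound $|\mu - a|$, and uses $(x+y)^2 \le 2x^2 + 2y^2$ for the centered square. Your extra split of the upper range at $t = N$ is only a cosmetic variant of the paper's single bound by $\int_0^{C+1}|\log s|^2\,ds$, and your argument also covers $N = 2, 3$ directly, since then $I$ lies in $[\sqrt{N},\infty)$, whereas the paper disposes of these cases separately.
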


\begin{proof}
We may suppose $N \ge 4$. For $N = 2, 3$, both $u(t)$ and $a$ are $O(H)$ throughout $I$, and the assertions follow by increasing the implied constants.

Split $I$ into its intersections with $[2, \sqrt N)$ and $[\sqrt N, \infty)$. On the first part, $u(t)$ and $a$ are $O(H)$, and its length is at most $\sqrt N$. Its contribution to the left side of \eqref{eq:averaging-density-comparison} is therefore $O(H^{2}/\sqrt N)$.

On the second part,
\begin{equation*}
|u(t) - a| = \frac{H|\log(t/N)|}{\log t\,\log N} \le \frac{2H|\log(t/N)|}{(\log N)^{2}}.
\end{equation*}
Since $M + N \le (C + 1)N$, the change of variable $s = t/N$ gives
\begin{equation*}
\frac{1}{N}\int_{I\,\cap\,[\sqrt N,\,\infty)}|\log(t/N)|^{2}dt \le \int_{0}^{C + 1}|\log s|^{2}ds \ll_{C} 1.
\end{equation*}
The integral at zero converges; for example, an antiderivative on $s > 0$ is $s((\log s)^{2} - 2\log s + 2)$, which tends to zero as $s \to 0$. Combining the two parts yields
\begin{equation*}
\frac{1}{N}\int_{I}(u(t) - a)^{2}dt \ll_{C} \frac{H^{2}}{\sqrt N} + \frac{H^{2}}{(\log N)^{4}} \ll_{C} \frac{H^{2}}{(\log N)^{4}}.
\end{equation*}
Here the last step uses the boundedness of $(\log N)^{4}/\sqrt N$ for $N \ge 4$.

To compare $\mu$ with $a$, note that
\begin{equation*}
\mu - a = \frac{1}{N}\int_{I}(u(t) - a)dt - da.
\end{equation*}
Cauchy--Schwarz, the fact that the length of $I$ is at most $N$, and \eqref{eq:averaging-density-comparison} give
\begin{equation*}
|\mu - a| \le \left(\frac{1}{N}\int_{I}(u(t) - a)^{2}dt\right)^{1/2} + da \ll_{C} \frac{H}{(\log N)^{2}} + \frac{H}{N\log N} \ll_{C} \frac{H}{(\log N)^{2}}.
\end{equation*}
This proves \eqref{eq:averaging-mean-comparison}. Finally, the pointwise inequality
\begin{equation*}
(u(t) - \mu)^{2} \le 2(u(t) - a)^{2} + 2(a - \mu)^{2}
\end{equation*}
proves \eqref{eq:averaging-centered-square} after integration.
\end{proof}

We can now compare the average of $Q$ with its value at $\mu$. For $M \ge 2$, the calculation is the familiar cancellation of the linear term in a Taylor expansion about the mean. We keep the small defect in the total mass throughout, so the same proof covers $M = 0$ and $1$.

\begin{proof}[Proof of Proposition~\ref{prop:density-averaging}]
Within this proof write $q(u) := Q(m; u, H)$. By \eqref{eq:averaging-mean-comparison} and $H \le U\log N$, we have $\mu \ll_{C} U$. Lemma~\ref{lem:averaging-uniform-derivatives} therefore gives
\begin{equation*}
|q(\mu)| + |q'(\mu)| \ll_{C, U} 1,
\end{equation*}
uniformly in $m$. We may suppose $N \ge 4$: when $N = 2, 3$, the upper bound on $H$ bounds both $u(t)$ and $\mu$ in terms of $U$, so the proposition follows from the same lemma by increasing the implied constant.

Put
\begin{equation*}
g(u) := q(u) - q(\mu) - (u - \mu)q'(\mu).
\end{equation*}
For $t \ge \sqrt N$, we have $0 \le u(t) \le 2U$. The entire segment between $u(t)$ and $\mu$ is therefore contained in a bounded interval depending only on $C$ and $U$. Taylor's theorem with integral remainder gives
\begin{equation*}
g(u) = (u - \mu)^{2}\int_{0}^{1}(1 - s)q''(\mu + s(u - \mu))ds.
\end{equation*}
The uniform bound for $q''$ from Lemma~\ref{lem:averaging-uniform-derivatives}, followed by \eqref{eq:averaging-centered-square}, now gives
\begin{equation*}
\frac{1}{N}\int_{I\,\cap\,[\sqrt N,\,\infty)}|g(u(t))|dt \ll_{C, U} \frac{1}{N}\int_{I}(u(t) - \mu)^{2}dt \ll_{C, U} \frac{H^{2}}{(\log N)^{4}}.
\end{equation*}

For $2 \le t < \sqrt N$, use \eqref{eq:averaging-q-global-bound} to bound $q(u(t))$, and \eqref{eq:averaging-q-derivative-bound} with $r = 0, 1$ to bound $q(\mu)$ and $q'(\mu)$. Since $u(t) \le H/\log 2$ and $\mu \ll_{C} U$,
\begin{equation*}
|g(u(t))| \le |q(u(t))| + |q(\mu)| + |u(t) - \mu|\,|q'(\mu)| \ll_{C, U} H^{2}.
\end{equation*}
This part has length at most $\sqrt N$, so
\begin{equation*}
\frac{1}{N}\int_{I\,\cap\,[2,\,\sqrt N)}|g(u(t))|dt \ll_{C, U} \frac{H^{2}}{\sqrt N} \ll_{C, U} \frac{H^{2}}{(\log N)^{4}}.
\end{equation*}
Both estimates are independent of $m$.

It remains to integrate the constant and linear terms exactly. The identities \eqref{eq:averaging-mass-identities} give
\begin{equation}
\label{eq:averaging-endpoint-correction}
\Delta_{\mathrm{av}}(m; H, M, N) = d\left(\mu q'(\mu) - q(\mu)\right) + \frac{1}{N}\int_{I}g(u(t))dt.
\end{equation}
The first term in \eqref{eq:averaging-endpoint-correction} is $O_{C, U}(1/N)$ because $0 \le d \le 2/N$. The bounds for the remaining integral prove \eqref{eq:density-averaging-bound}.
\end{proof}

To bound $F(m; H, M, N) - Q(m; \mu, H)$, a direct application of the mean value theorem to $Q$ as a function of $u$ would bound the difference by the mean absolute deviation of $H/\log t$ from $\mu$. Our estimates would then give only $O_{C, U}(H/(\log N)^{2} + 1/N)$. By retaining the linear term in the Taylor expansion about $u = \mu$ and integrating it before taking absolute values, we exploit its cancellation and obtain $O_{C, U}(H^{2}/(\log N)^{4} + 1/N)$ instead. When $H \asymp \log N$, this improves the error bound from $O(1/H)$ to $O(1/H^{2})$.

\subsection{The effective mean}
\label{subsec:averaging-effective-mean}

The preceding proof retained the logarithmic integral in $\mu$ exactly. To relate it to a single logarithm, we now expand that integral to two terms. The second term depends on the position of the averaging interval. This dependence accounts for the different normalizations in $(0, N]$ and $(N, 2N]$.

For $\alpha \ge 0$, put
\begin{equation}
\label{eq:averaging-logarithmic-shift}
c(\alpha) := \int_{\alpha}^{\alpha + 1}\log s\,ds = (\alpha + 1)\log(\alpha + 1) - \alpha\log\alpha - 1,
\end{equation}
where $0\log 0$ is interpreted as zero. The integral at $\alpha = 0$ is improper and converges. In particular, $c$ is continuous on $[0, \infty)$ and bounded on each interval $[0, C]$.

\begin{proposition}
\label{prop:averaging-effective-mean}
Fix $C > 0$, and put $\alpha = M/N$. As $N \to \infty$, uniformly for integers $0 \le M \le CN$ and $H \ge 1$,
\begin{equation}
\label{eq:averaging-mean-two-terms}
\mu(H, M, N) = \frac{H}{\log N} - c(\alpha)\frac{H}{(\log N)^{2}} + O_{C}\left(\frac{H}{(\log N)^{3}}\right).
\end{equation}
Equivalently, for all sufficiently large $N$,
\begin{equation}
\label{eq:averaging-mean-shifted-logarithm}
\mu(H, M, N) = \frac{H}{\log N + c(\alpha)} + O_{C}\left(\frac{H}{(\log N)^{3}}\right).
\end{equation}
The implied constants are independent of $H$ and $M$; in particular, no upper bound on $H$ is needed for these expansions.
\end{proposition}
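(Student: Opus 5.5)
We expand $1/\log t$ about $\log N$ after the substitution $t = Ns$. With $I = [\max\{2, M\}, M + N]$ and $\alpha = M/N$, we have
\begin{equation*}
\mu = \frac{H}{N}\int_{I}\frac{dt}{\log t} = H\int_{\max\{2, M\}/N}^{\alpha + 1}\frac{ds}{\log N + \log s}.
\end{equation*}
We split the range of $s$ at $s_{0} := N^{-1/2}$, as in the proof of Lemma~\ref{lem:logarithmic-density-variation}. On the short piece $[2/N, N^{-1/2})$, which occurs only when $M < \sqrt N$, the integrand is at most $1/\log 2$. That piece therefore contributes $O(H N^{-1/2})$, which is $O(H/(\log N)^{3})$. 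Replacing the lower limit by $0$ on the main expansion below costs $\int_{0}^{s_{0}}(\ldots)\,ds$ for the explicit terms, and this is likewise $O(HN^{-1/2}(\log N)^{2})$, which is negligible.

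On $s \ge s_{0}$ we have $|\log s| \le \tfrac12\log N$. Writing $x = \log s/\log N$, so that $|x| \le 1/2$, gives
\begin{equation*}
\frac{1}{\log N + \log s} = \frac{1}{\log N}\left(1 - x + \frac{x^{2}}{1 + x}\right),
\end{equation*}
and the last term is at most $2(\log s)^{2}/(\log N)^{3}$ in absolute value. Integrating over $s \in [\max\{2, M\}/N, \alpha + 1]$, or its intersection with $[s_{0}, \infty)$, gives the following:
\begin{itemize}
\item the constant term gives $H(1 - d)/\log N = H/\log N + O(H/(N\log N))$;
\item the linear term gives $-H(\log N)^{-2}\int_{\alpha}^{\alpha + 1}\log s\,ds = -c(\alpha)H/(\log N)^{2}$, up to the endpoint adjustments above;
\item the quadratic remainder is bounded by $2H(\log N)^{-3}\int_{0}^{C + 1}(\log s)^{2}\,ds \ll_{C} H/(\log N)^{3}$, by the same convergent integral used in Lemma~\ref{lem:logarithmic-density-variation}.
\end{itemize}
The lower limit is $\alpha$ when $M \ge 2$ and at most $2/N$ otherwise. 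In the latter case, $\int_{0}^{2/N}|\log s|\,ds \ll (\log N)/N$, which is negligible. This proves \eqref{eq:averaging-mean-two-terms}. Every bound is linear in $H$ with constants depending only on $C$, so no upper bound on $H$ is needed.

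For \eqref{eq:averaging-mean-shifted-logarithm}, note that $c$ is bounded on $[0, C]$. For $N$ large in terms of $C$, we have $|c(\alpha)| \le \tfrac12\log N$, and then
\begin{equation*}
\frac{H}{\log N + c(\alpha)} = \frac{H}{\log N} - \frac{c(\alpha)H}{(\log N)^{2}} + O_{C}\left(\frac{H}{(\log N)^{3}}\right).
\end{equation*}
Comparing this with \eqref{eq:averaging-mean-two-terms} gives \eqref{eq:averaging-mean-shifted-logarithm}.

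The main obstacle is only bookkeeping near $s = 0$, that is, for small $M$. There the expansion in $x$ fails, and the lower limit differs from $\alpha$. Both issues are handled by the split at $N^{-1/2}$ and the integrability of $(\log s)^{2}$ at $0$.
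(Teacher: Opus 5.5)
Your proposal is correct and follows the paper's proof essentially step for step. Your expansion $1/(\log N + \log s) = (\log N)^{-1}\left(1 - x + x^{2}/(1 + x)\right)$ is exactly the paper's identity \eqref{eq:averaging-reciprocal-logarithm} written in the variable $s = t/N$. You also use the same split at $\sqrt N$, the same convergent integral $\int_{0}^{C+1}(\log s)^{2}\,ds$ for the remainder, and the same algebraic identity for $1/(\log N + c(\alpha))$. (A minor point: extending the explicit terms down to $s = 0$ actually costs only $O(HN^{-1/2}/\log N)$, less than the $O(HN^{-1/2}(\log N)^{2})$ you state, but either bound is negligible.)
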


\begin{proof}
Write $b = \log N$ within this proof. For $t > 1$, the identity
\begin{equation}
\label{eq:averaging-reciprocal-logarithm}
\frac{1}{\log t} = \frac{1}{b} - \frac{\log(t/N)}{b^{2}} + \frac{(\log(t/N))^{2}}{b^{2}\log t}
\end{equation}
is exact. On $t \ge \sqrt N$, the absolute value of the last term is at most $2|\log(t/N)|^{2}/b^{3}$. As in the proof of Lemma~\ref{lem:logarithmic-density-variation}, its integral over the relevant range, divided by $N$, is $O_{C}(b^{-3})$.

We compare the original integral with the first two terms of \eqref{eq:averaging-reciprocal-logarithm} integrated over the full interval $[M, M + N]$. Only the portion below $\sqrt N$ needs separate treatment, including the missing interval below $2$ when $M = 0$ or $1$. The original integrand contributes at most
\begin{equation*}
\frac{1}{N}\int_{2}^{\sqrt N}\frac{dt}{\log t} \le \frac{1}{\sqrt N\log 2}.
\end{equation*}
For the two elementary terms we use
\begin{equation*}
\frac{1}{N}\int_{0}^{\sqrt N}|\log(t/N)|dt = \frac{b/2 + 1}{\sqrt N}.
\end{equation*}
Including the integral of $1/b$, their contribution below $\sqrt N$ is $O(N^{-1/2}/b)$ for $N \ge 4$. Thus the entire error made on this portion is $O(N^{-1/2})$, which is $O(b^{-3})$.

We have proved
\begin{equation*}
\frac{\mu}{H} = \frac{1}{N}\int_{M}^{M + N}\left(\frac{1}{b} - \frac{\log(t/N)}{b^{2}}\right)dt + O_{C}(b^{-3}).
\end{equation*}
The first term integrates to $1/b$. In the second, the change of variable $s = t/N$ gives $c(\alpha)$ as defined in \eqref{eq:averaging-logarithmic-shift}. This proves \eqref{eq:averaging-mean-two-terms}, including uniformly when $\alpha$ approaches zero.

Since $c(\alpha)$ is bounded for $0 \le \alpha \le C$, the denominator $b + c(\alpha)$ is positive for all sufficiently large $N$, uniformly in $\alpha$. The elementary identity
\begin{equation*}
\frac{1}{b + c(\alpha)} = \frac{1}{b} - \frac{c(\alpha)}{b^{2}} + \frac{c(\alpha)^{2}}{b^{2}(b + c(\alpha))}
\end{equation*}
then proves \eqref{eq:averaging-mean-shifted-logarithm}.
\end{proof}

For the two averaging intervals of particular interest, 
\begin{equation*}
c(0) = -1, \qquad c(1) = 2\log 2 - 1.
\end{equation*}
Consequently, \eqref{eq:averaging-mean-shifted-logarithm} gives
\begin{equation*}
\mu(H, 0, N) = \frac{H}{\log N - 1} + O\left(\frac{H}{(\log N)^{3}}\right)
\end{equation*}
and
\begin{equation*}
\mu(H, N, N) = \frac{H}{\log N + 2\log 2 - 1} + O\left(\frac{H}{(\log N)^{3}}\right).
\end{equation*}
These shifted logarithms describe the average of the varying density $1/\log t$ over the chosen interval. They do not replace that density pointwise.

\begin{corollary}
\label{cor:averaging-shifted-parameter}
Fix $C, U > 0$, and write
\begin{equation*}
\lambda := \frac{H}{\log N + c(M/N)},
\end{equation*}
where the function $c$ is defined in \eqref{eq:averaging-logarithmic-shift}. As $N \to \infty$, uniformly for integers $0 \le M \le CN$, $1 \le H \le U\log N$ and $m \ge 0$,
\begin{equation*}
F(m; H, M, N) = Q(m; \lambda, H) + O_{C, U}\left(\frac{H}{(\log N)^{3}} + \frac{1}{N}\right).
\end{equation*}
The error is $o(1/H)$ uniformly in these ranges.
\end{corollary}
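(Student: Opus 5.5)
The plan is to combine Proposition~\ref{prop:density-averaging} with Proposition~\ref{prop:averaging-effective-mean}, using the triangle inequality:
\begin{equation*}
|F(m; H, M, N) - Q(m; \lambda, H)| \le |\Delta_{\mathrm{av}}(m; H, M, N)| + |Q(m; \mu, H) - Q(m; \lambda, H)|.
\end{equation*}
The first term is $O_{C,U}(H^{2}/(\log N)^{4} + 1/N)$ by Proposition~\ref{prop:density-averaging}, uniformly in $m$. Since $H \le U\log N$, we have $H^{2}/(\log N)^{4} \le UH/(\log N)^{3}$, so this term is absorbed into the stated bound.

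For the second term I would use the mean value theorem in $u$. Proposition~\ref{prop:averaging-effective-mean}, in the form \eqref{eq:averaging-mean-shifted-logarithm}, gives $|\mu - \lambda| \ll_{C} H/(\log N)^{3}$ for all sufficiently large $N$, uniformly in $M$ and $H$. To bound the derivative, I would note that both $\mu$ and $\lambda$ lie in a bounded interval $[0, V]$ with $V$ depending only on $C$ and $U$:
\begin{itemize}
\item $\mu \ll_{C} H/\log N \le U$ by \eqref{eq:averaging-mean-comparison};
\item $\lambda \le H/(\log N - \sup_{[0,C]}|c|) \le 2U$ once $N$ is large, since $c$ is bounded on $[0,C]$.
\end{itemize}
Lemma~\ref{lem:averaging-uniform-derivatives} with $r = 1$ then bounds $\partial_{u}Q(m; u, H)$ by $O_{C,U}(1)$ on this interval, uniformly in $m \ge 0$ and $H \ge 1$. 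The second term is therefore $O_{C,U}(H/(\log N)^{3})$. The small values of $N$ excluded by ``sufficiently large'' are irrelevant, because the statement is asymptotic as $N \to \infty$.

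Finally, for the $o(1/H)$ claim I would multiply the error by $H$. This gives $H^{2}/(\log N)^{3} + H/N \le U^{2}/\log N + U\log N/N$, which tends to $0$.

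The only point needing care is the uniformity in $m$, including the degenerate case $u$ near $0$. This is handled by the $m$-free derivative bound \eqref{eq:averaging-q-derivative-bound}, which holds for all $u \ge 0$ with continuity at $u=0$. Beyond that, the argument is direct bookkeeping.
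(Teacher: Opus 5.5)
Your proposal is correct and follows the paper's proof essentially step for step. Like the paper, you pass through $Q(m;\mu,H)$ and bound $\Delta_{\mathrm{av}}$ by Proposition~\ref{prop:density-averaging}, absorbing $H^{2}/(\log N)^{4} \le UH/(\log N)^{3}$; you control $|Q(m;\mu,H)-Q(m;\lambda,H)|$ by the mean value theorem, using $|\mu-\lambda| \ll_{C} H/(\log N)^{3}$, boundedness of $\mu,\lambda$ and the $r=1$ case of Lemma~\ref{lem:averaging-uniform-derivatives}; and you multiply by $H$ for the $o(1/H)$ claim.
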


\begin{proof}
Proposition~\ref{prop:averaging-effective-mean} gives $|\mu - \lambda| \ll_{C} H/(\log N)^{3}$. Both $\mu$ and $\lambda$ are nonnegative and bounded in terms of $C$ and $U$ for all sufficiently large $N$. The first-derivative bound in Lemma~\ref{lem:averaging-uniform-derivatives} and the mean value theorem therefore give
\begin{equation*}
|Q(m; \mu, H) - Q(m; \lambda, H)| \ll_{C, U} \frac{H}{(\log N)^{3}},
\end{equation*}
uniformly in $m$. Combine this with Proposition~\ref{prop:density-averaging}, using
\begin{equation*}
\frac{H^{2}}{(\log N)^{4}} \le \frac{UH}{(\log N)^{3}}.
\end{equation*}
Finally, multiplying the error bound in the corollary by $H$ gives
\begin{equation*}
H\left(\frac{H}{(\log N)^{3}} + \frac{1}{N}\right) = \frac{H^{2}}{(\log N)^{3}} + \frac{H}{N} \le \frac{U^{2}}{\log N} + \frac{U\log N}{N} \to 0.
\end{equation*}
Thus the error is $o(1/H)$ uniformly in the stated ranges.
\end{proof}

The second term of \eqref{eq:averaging-mean-two-terms} explains why replacing $\mu$ by $H/\log N$ would lose the required precision. When $H \asymp \log N$ and $c(M/N)$ stays away from zero, the change in the parameter has order $1/H$. Its first-order effect on $Q$ can therefore affect the part of the singular-series correction proportional to $1/H$. Keeping $\mu$ exactly, or using the shifted logarithm with its stated error, makes the averaging error $o(1/H)$.

\section{From prime tuples to prime counts}
\label{sec:prime-tuples-to-prime-counts}

The preceding sections leave one residual to control: the signed Hardy--Littlewood residual in the Bonferroni sandwich (Proposition \ref{prop:inclusion-exclusion-sandwich}). We first give a quantitative transfer statement, keeping this residual explicit. We then derive sufficient conditions on averaged prime-tuple counts and on individual prime-tuple counts. These conditions are hypotheses, not consequences of the usual conjecture for each fixed tuple. The distinction matters because both the set of shifts and its cardinality vary with $N$.

Except where the full notation is useful, we suppress $H$, $M$ and $N$ as arguments, as in Section~\ref{sec:bonferroni-sandwich}. The integer $m$ is fixed in the asymptotic conclusions of this section. Some of the intermediate inequalities hold uniformly in $m$, but the integrated singular-series estimate used here was stated for fixed $m$.

Recall that the averaged prime-tuple residual and its weighted sum are
\begin{equation*}
\Delta_{\mathrm{HL}}(k) = T(k) - J(k)S(k)
\end{equation*}
and
\begin{equation*}
R_{\mathrm{HL}}(m, j) = \sum_{\ell \, = \, 0}^{j}(-1)^{\ell}\binom{m + \ell}{m}\Delta_{\mathrm{HL}}(m + \ell).
\end{equation*}
Here $T(k)$ is the average number of ways to choose $k$ primes from an interval, and $J(k)S(k)$ is its Hardy--Littlewood prediction. It is the signed sum $R_{\mathrm{HL}}$, at the two truncation orders in the Bonferroni sandwich, that the conditional argument needs to control. Bounds for the individual residuals $\Delta_{\mathrm{HL}}(k)$ will provide sufficient conditions.

\subsection{A quantitative transfer statement}
\label{subsec:quantitative-prime-count-transfer}

We collect the unconditional singular-series and truncation bounds in an error term $E_{\mathrm{an}}$, where the subscript stands for ``analytic''. Keeping the Hardy--Littlewood residual explicit shows what precision is needed and allows any suitable bound for that residual to be inserted directly, without revisiting the preceding proofs.

\begin{proposition}
\label{prop:quantitative-prime-count-transfer}
There is an absolute constant $D \ge 1$ with the following property. Fix an integer $m \ge 0$ and real numbers $U, \epsilon > 0$. Let $M \ge 0$, $N \ge 2$ and $2 \le H \le U\log N$ be integers, and let $L \ge 0$ be even. Write
\begin{equation}
\label{eq:transfer-analytic-error}
E_{\mathrm{an}} := H^{-3/2 + \epsilon} + \frac{(DU)^{m + L + 1}}{m!(L + 1)!} + \frac{1 + (L + 2)(DH)^{m + L + 1}}{\sqrt{N}}.
\end{equation}
Then, uniformly in $M$ and $L$,
\begin{equation}
\label{eq:transfer-integrated-bound}
P(m) = F(m) + O_{m, U, \epsilon}\left(\max_{j \, \in \, \{L, L + 1\}}|R_{\mathrm{HL}}(m, j)| + E_{\mathrm{an}}\right).
\end{equation}
If also $0 \le M \le CN$ for a fixed $C > 0$, then
\begin{equation}
\label{eq:transfer-averaged-parameter-bound}
P(m) = Q(m; \mu) + O_{m, C, U, \epsilon}\left(\max_{j \, \in \, \{L, L + 1\}}|R_{\mathrm{HL}}(m, j)| + E_{\mathrm{an}} + \frac{H^{2}}{(\log N)^{4}} + \frac{1}{N}\right).
\end{equation}
Both statements are unconditional.
\end{proposition}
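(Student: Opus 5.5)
The plan is to combine the Bonferroni sandwich (Proposition~\ref{prop:inclusion-exclusion-sandwich}) with the integrated singular-series bound (Proposition~\ref{prop:integrated-singular-series-residuals}) and, for the second assertion, with the density-averaging estimate (Proposition~\ref{prop:density-averaging}). By \eqref{eq:integrated-sandwich}, $P(m) - F(m)$ lies between $\mathcal{R}(m, L + 1)$ and $\mathcal{R}(m, L)$. Hence
\begin{equation*}
|P(m) - F(m)| \le \max_{j \, \in \, \{L, L + 1\}}\left\{|R_{\mathrm{HL}}(m, j)| + |R_{\mathrm{MS}}(m, j)| + |R_{\mathrm{tr}}(m, j)|\right\}.
\end{equation*}
This holds unconditionally for every even $L$, with no restriction on $M$. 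It therefore suffices to show that $|R_{\mathrm{MS}}(m, j)| + |R_{\mathrm{tr}}(m, j)| \ll_{m, U, \epsilon} E_{\mathrm{an}}$ for $j \in \{L, L + 1\}$, uniformly in $M$ and $L$.

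For this I apply \eqref{eq:integrated-singular-series-residual-bound} at $j = L$ and $j = L + 1$. The term $H^{-3/2 + \epsilon}$ matches the first term of $E_{\mathrm{an}}$ directly.

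The factorial term at $j = L + 1$ is
\begin{equation*}
\frac{(C_{2}U)^{m + L + 2}}{m!(L + 2)!} \le C_{2}U\cdot\frac{(C_{2}U)^{m + L + 1}}{m!(L + 1)!}.
\end{equation*}
This is absorbed into the $L$ term, since the factor $C_{2}U$ is allowed to enter the $O_{m,U,\epsilon}$ constant. So both cases are bounded by $(DU)^{m + L + 1}/(m!(L + 1)!)$ once $D \ge C_{2}$.

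For the $\sqrt N$ term, $(j + 1)(C_{2}H)^{m + j}$ is at most $(L + 2)(C_{2}H)^{m + L + 1}$ for $j \le L + 1$. This uses $C_{2}H \ge 1$, which holds since $H \ge 2$ and $C_{2} \ge 1$. Taking $D := C_{2}$ then gives \eqref{eq:transfer-integrated-bound}. The only point needing care is that $D$ must be absolute, not dependent on $U$. This holds because $C_{2}$ is absolute and the $U$-dependence sits only in the implied constant.

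For \eqref{eq:transfer-averaged-parameter-bound}, I write $F(m) = Q(m; \mu) + \Delta_{\mathrm{av}}(m)$. When $0 \le M \le CN$ and $2 \le H \le U\log N$, Proposition~\ref{prop:density-averaging} gives $|\Delta_{\mathrm{av}}(m)| \ll_{C, U} H^{2}/(\log N)^{4} + 1/N$, uniformly in $m$. Adding this to \eqref{eq:transfer-integrated-bound} finishes the proof.

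There is no real obstacle here; the proposition is bookkeeping. The main step to check is that the implied constants are uniform in $M$ and $L$, which Proposition~\ref{prop:integrated-singular-series-residuals} explicitly provides.
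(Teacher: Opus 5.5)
Your proposal is correct and matches the paper's proof essentially step for step. Both use the Bonferroni sandwich with the triangle inequality, then Proposition~\ref{prop:integrated-singular-series-residuals} at $j = L$ and $j = L + 1$ with $D \ge C_{2}$, absorbing the extra factor $C_{2}U/(L + 2)$ of the factorial term into the implied constant and using $C_{2}H \ge 1$ for the $\sqrt{N}$ term, and finally $F(m) = Q(m; \mu) + \Delta_{\mathrm{av}}(m)$ with Proposition~\ref{prop:density-averaging}.
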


\begin{proof}
Proposition \ref{prop:inclusion-exclusion-sandwich} (Bonferroni sandwich) gives
\begin{equation*}
|P(m) - F(m)| \le \max_{j \, \in \, \{L, L + 1\}}\left\{|R_{\mathrm{HL}}(m, j)| + |R_{\mathrm{MS}}(m, j)| + |R_{\mathrm{tr}}(m, j)|\right\}.
\end{equation*}
Apply Proposition~\ref{prop:integrated-singular-series-residuals} at $j = L$ and $j = L + 1$. Choose $D$ at least as large as the absolute constant $C_{2}$ in that proposition. The last term there is bounded by the last term in \eqref{eq:transfer-analytic-error}. For the factorial term, increasing $j$ from $L$ to $L + 1$ multiplies the expression with base $DU$ by $DU/(L + 2)$, which is bounded in terms of $U$. This proves \eqref{eq:transfer-integrated-bound}. The identity $F(m) = Q(m; \mu) + \Delta_{\mathrm{av}}(m)$ and Proposition~\ref{prop:density-averaging} give \eqref{eq:transfer-averaged-parameter-bound}.
\end{proof}

The truncation order can now be chosen once and for all. We record the resulting conditional statement before examining sufficient conditions for its hypothesis.

\begin{corollary}
\label{cor:weighted-hl-transfer}
Fix an integer $m \ge 0$ and real numbers $U, \delta > 0$. Suppose that $H \to \infty$ and $H \le U\log N$, and let $L$ be the least even integer satisfying
\begin{equation}
\label{eq:transfer-truncation-order}
L \ge (1 + \delta)\frac{\log H}{\log\log H}.
\end{equation}
On any family of integers $M \ge 0$, $N \ge 2$ and $H$ in these ranges for which
\begin{equation}
\label{eq:weighted-hl-sufficient-condition}
\max_{j \, \in \, \{L, L + 1\}}|R_{\mathrm{HL}}(m, j; H, M, N)| = o(1/H)
\end{equation}
holds uniformly, we have, with the same uniformity,
\begin{equation}
\label{eq:conditional-integrated-prime-count}
P(m; H, M, N) = F(m; H, M, N) + o(1/H).
\end{equation}
If also $0 \le M \le CN$ for a fixed $C > 0$, then
\begin{equation}
\label{eq:conditional-mean-prime-count}
P(m; H, M, N) = Q(m; \mu, H) + o(1/H).
\end{equation}
\end{corollary}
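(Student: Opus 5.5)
The plan is to apply Proposition~\ref{prop:quantitative-prime-count-transfer} with the truncation order $L$ from \eqref{eq:transfer-truncation-order}, and to check that every term on the right of \eqref{eq:transfer-integrated-bound} and \eqref{eq:transfer-averaged-parameter-bound} is $o(1/H)$ uniformly. The Hardy--Littlewood term is $o(1/H)$ by hypothesis \eqref{eq:weighted-hl-sufficient-condition}, so all the work lies in the analytic error $E_{\mathrm{an}}$ from \eqref{eq:transfer-analytic-error} and the averaging terms. Throughout we fix $\epsilon = 1/4$, say, so that the implied constants depend only on $m$, $U$ (and $C$), which are fixed.

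First, $H^{-3/2+\epsilon} = H^{-5/4} = o(1/H)$. Second, for the factorial term we use Stirling's formula exactly as in the proof of Corollary~\ref{cor:singular-series-growing-truncation}:
\begin{equation*}
\log\frac{(DU)^{m+L+1}}{m!(L+1)!} = -(L+1)\log(L+1) + O_{m,U}(L+1).
\end{equation*}
Since $L \sim (1+\delta)\log H/\log\log H$ (being the least even integer above this quantity, it exceeds it by at most $2$), we get $(L+1)\log(L+1) = (1+\delta+o(1))\log H$. The $O(L)$ term is $o(\log H)$, so the factorial term is $H^{-1-\delta+o(1)} = o(1/H)$.

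Third, for the last term of $E_{\mathrm{an}}$ we argue as in Corollary~\ref{cor:integrated-growing-truncation}. The numerator satisfies $\log\bigl(1+(L+2)(DH)^{m+L+1}\bigr) = O_{m,\delta}\bigl((\log H)^2/\log\log H\bigr) = o(H)$. Meanwhile $\sqrt N = \exp(\tfrac12\log N) \ge \exp(H/(2U))$, so this term is $\exp(-H/(2U)+o(H))$, which is $o(1/H)$. This proves \eqref{eq:conditional-integrated-prime-count}.

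For \eqref{eq:conditional-mean-prime-count}, we additionally need $H^2/(\log N)^4 + 1/N = o(1/H)$. This is Corollary~\ref{cor:density-averaging-negligible}: multiplying by $H$ gives at most $U^3/\log N + U\log N/N$. That tends to $0$ because $\log N \ge H/U \to \infty$.

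The only point requiring care is uniformity. All bounds depend on the family only through $H$ (and through $N \ge e^{H/U}$), and $H\to\infty$. Since Proposition~\ref{prop:quantitative-prime-count-transfer} is uniform in $M$ and $L$, the $o(1/H)$ conclusions hold uniformly over the family. I expect no real obstacle: the mild subtlety is checking that the choice of $L$ simultaneously makes the factorial tail beat $1/H$ while keeping $(DH)^{L}$ subexponential in $H$, and the $\log\log H$ normalization achieves exactly this.
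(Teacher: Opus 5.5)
Your proposal is correct and follows essentially the same route as the paper's proof. Both arguments fix $\epsilon\in(0,1/2)$ in Proposition~\ref{prop:quantitative-prime-count-transfer} and use Stirling's formula to make the factorial term $H^{-1-\delta+o(1)}$. Both bound the last term of $E_{\mathrm{an}}$ by $\exp(-H/(2U)+o(H))$ using $\log N\ge H/U$, and both dispose of the averaging terms by multiplying by $H$ to get $U^{3}/\log N+U\log N/N\to 0$.
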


\begin{proof}
Fix $0 < \epsilon < 1/2$ in Proposition~\ref{prop:quantitative-prime-count-transfer}. Stirling's formula and the choice of $L$ give
\begin{equation*}
\log\left(\frac{(DU)^{m + L + 1}}{m!(L + 1)!}\right) = -(L + 1)\log(L + 1) + O_{m, U}(L + 1) = -(1 + \delta)\log H + o(\log H).
\end{equation*}
Thus this term is $H^{-1 - \delta + o(1)}$. Moreover,
\begin{equation*}
\log\left(1 + (L + 2)(DH)^{m + L + 1}\right) = O_{m, \delta}\left(\frac{(\log H)^{2}}{\log\log H}\right) = o(H).
\end{equation*}
Since $\log N \ge H/U$, the last term in \eqref{eq:transfer-analytic-error} is at most $\exp(-H/(2U) + o(H))$. All three terms in $E_{\mathrm{an}}$ are therefore $o(1/H)$. This proves \eqref{eq:conditional-integrated-prime-count}. For the second assertion, the remaining terms in \eqref{eq:transfer-averaged-parameter-bound} satisfy
\begin{equation*}
H\left(\frac{H^{2}}{(\log N)^{4}} + \frac{1}{N}\right) \le \frac{U^{3}}{\log N} + \frac{U\log N}{N} \to 0.
\end{equation*}
\end{proof}

Condition \eqref{eq:weighted-hl-sufficient-condition} retains cancellation within each tuple average and between the different tuple sizes. It is a sufficient condition tailored to the argument, rather than a claim of logical necessity. Indeed, the signed sandwich requires only an upper bound for $R_{\mathrm{HL}}(m, L)$ and a lower bound for $R_{\mathrm{HL}}(m, L + 1)$. The weaker condition
\begin{equation*}
\max\left\{0, R_{\mathrm{HL}}(m, L), -R_{\mathrm{HL}}(m, L + 1)\right\} = o(1/H)
\end{equation*}
would suffice as well. We use the symmetric form because it follows conveniently from bounds on prime-tuple residuals.

\subsection{Sufficient Hardy--Littlewood hypotheses}
\label{subsec:sufficient-hardy-littlewood-hypotheses}

We first remove the empty tuple and the singleton tuples from the unproved part of the argument. The identity for $k = 0$ is exact. For $k = 1$, the quantitative prime number theorem supplies more precision than we need.

\begin{lemma}
\label{lem:empty-and-singleton-hl-residuals}
We have
\begin{equation*}
\Delta_{\mathrm{HL}}(0; H, M, N) = \frac{\max\{2 - M, 0\}}{N}.
\end{equation*}
For every fixed $C, B > 0$, uniformly for integers $0 \le M \le CN$, $N \ge 2$ and $H \ge 1$,
\begin{equation}
\label{eq:transfer-singleton-residual}
\Delta_{\mathrm{HL}}(1; H, M, N) \ll_{C, B} \frac{H}{(\log N)^{B}} + \frac{H^{2}}{N}.
\end{equation}
In particular, for fixed $U > 0$, both residuals are $o(1/H)$ uniformly when $H \le U\log N$ and $N \to \infty$.
\end{lemma}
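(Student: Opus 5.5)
For $k = 0$ I would argue directly from the definitions. $T(0) = 1$, $S(0; H) = 1$, and $J(0; M, N) = (M + N - \max\{2, M\})/N$. This equals $1 - \max\{2 - M, 0\}/N$, because $M + N - \max\{2, M\} = N - \max\{2 - M, 0\}$. Subtracting gives the stated identity, which is the formula already recorded in Section~\ref{sec:setup-and-bookkeeping}.

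For $k = 1$, every singleton has $\mathfrak{S} = 1$, so $S(1; H) = H$ and $\Delta_{\mathrm{HL}}(1) = T(1) - H J(1; M, N) = T(1) - \mu$. Section~\ref{sec:setup-and-bookkeeping} already records the interchange of sums
\begin{equation*}
T(1) = \frac{1}{N}\sum_{h = 1}^{H}\left(\pi(M + N + h) - \pi(M + h)\right) = \frac{H}{N}\left(\pi(M + N) - \pi(M)\right) + O\left(\frac{H^{2}}{N}\right).
\end{equation*}
The error arises because shifting each endpoint by $h \le H$ changes the prime count by at most $h$. It therefore remains to compare $\pi(M + N) - \pi(M)$ with $\int_{\max\{2, M\}}^{M + N} dt/\log t$.

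I would write this difference as $(\pi(M + N) - \mathrm{li}(M + N)) - (\pi(M) - \mathrm{li}(M))$, with the convention that $\mathrm{li}$ is integrated from $2$ and the case $M \le 2$ costs $O(1)$. The de la Vallée Poussin form of the prime number theorem gives $\pi(x) - \mathrm{li}(x) \ll_{B} x/(\log x)^{B + 1}$ for $x \ge 2$. For the upper endpoint, $M + N \le (C + 1)N$, so this term is $\ll_{C, B} N/(\log N)^{B}$.

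The one point needing care is the lower endpoint, since $M$ may be small compared with $N$. There are two cases. If $M \le \sqrt{N}$, the trivial bound $O(M) = O(\sqrt{N})$ suffices, because $\sqrt{N} \ll N/(\log N)^{B}$. If $M > \sqrt{N}$, then $\log M \ge \tfrac{1}{2}\log N$ and $M \le CN$, so the error is $\ll_{C, B} N/(\log N)^{B}$.

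Multiplying by $H/N$ gives $\Delta_{\mathrm{HL}}(1) \ll_{C, B} H/(\log N)^{B} + H^{2}/N$. For the final assertion, take $B = 3$ with $H \le U\log N$. Then $H \cdot \Delta_{\mathrm{HL}}(1) \ll U^{2}/\log N + U^{3}(\log N)^{3}/N \to 0$. Similarly, $H \cdot \Delta_{\mathrm{HL}}(0) \le 2U\log N/N \to 0$. I expect no genuine obstacle; the only care needed is the uniformity in small $M$, which the split at $\sqrt{N}$ handles.
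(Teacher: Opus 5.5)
Your proposal is correct and follows the paper's proof essentially step for step. Both arguments compute the $k=0$ case exactly from the length of the integral defining $J(0;M,N)$, use the endpoint-shift formula for $T(1)$ with $S(1;H)=H$, apply the quantitative prime number theorem at the upper endpoint, and split the lower endpoint at $\sqrt{N}$ with a trivial bound when $M$ is small; your choices of $B=3$ in the final step and of an extra factor of $\log x$ in the saving are only cosmetic differences from the paper's ``take $B>2$.''
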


\begin{proof}
The empty-tuple identity follows from $T(0) = S(0) = 1$ and the length of the integral defining $J(0)$. For the singleton residual, the endpoint calculation in Subsection~\ref{subsec:corrected-poisson-and-averaging} gives
\begin{equation*}
T(1) = \frac{H}{N}\left(\pi(M + N) - \pi(M)\right) + O\left(\frac{H^{2}}{N}\right).
\end{equation*}
We use the quantitative prime number theorem in the form
\begin{equation}
\label{eq:transfer-quantitative-pnt}
\pi(x) = \int_{2}^{x}\frac{dt}{\log t} + O_{B}\left(\frac{x}{(\log x)^{B}}\right) \qquad (x \ge 2),
\end{equation}
valid for each fixed $B > 0$. This follows from the stronger exponential error bound in \cite[Theorem~6.9]{MV2006} and is the only additional analytic input in this section.

At the upper endpoint, \eqref{eq:transfer-quantitative-pnt} has error $O_{C, B}(N/(\log N)^{B})$. The same bound holds at the lower endpoint uniformly in $M$: use \eqref{eq:transfer-quantitative-pnt} when $M \ge \sqrt{N}$ and the trivial bound $O(\sqrt{N})$ for the prime count and the integral when $2 \le M < \sqrt{N}$. For $M = 0, 1$, the lower prime count and the lower integral are both absent. Consequently,
\begin{equation*}
\pi(M + N) - \pi(M) = \int_{\max\{2, M\}}^{M + N}\frac{dt}{\log t} + O_{C, B}\left(\frac{N}{(\log N)^{B}}\right).
\end{equation*}
Since $S(1; H) = H$, substitution proves \eqref{eq:transfer-singleton-residual}. To obtain the final assertion, take $B > 2$ and multiply the bound by $H$, using $H \le U\log N$.
\end{proof}

We next make the binomial weights explicit. In the following lemma, the numbers $e_{k}$ are bounds for the already averaged, signed residuals $\Delta_{\mathrm{HL}}(k)$; no absolute values are taken inside the sum over sets of shifts defining those residuals.

\begin{lemma}
\label{lem:weighted-hl-error-envelopes}
Let $m, L \ge 0$ be integers, and put $K := m + L + 1$. Suppose that $e_{k} \ge 0$ and $|\Delta_{\mathrm{HL}}(k)| \le e_{k}$ for $2 \le k \le K$. Then
\begin{equation}
\label{eq:weighted-hl-positive-envelope}
\max_{j \, \in \, \{L, L + 1\}}|R_{\mathrm{HL}}(m, j)| \le \frac{2}{N} + |\Delta_{\mathrm{HL}}(1)| + \sum_{k \, = \, \max\{2, m\}}^{K}\binom{k}{m}e_{k}.
\end{equation}
In particular, if $A \ge 1$, $\xi \ge 0$ and
\begin{equation}
\label{eq:averaged-hl-factorial-envelope}
|\Delta_{\mathrm{HL}}(k)| \le \frac{\xi A^{k}}{k!}\left(\frac{H}{\log N}\right)^{k} \qquad (2 \le k \le K),
\end{equation}
then
\begin{equation}
\label{eq:weighted-hl-factorial-envelope}
\max_{j \, \in \, \{L, L + 1\}}|R_{\mathrm{HL}}(m, j)| \le \frac{2}{N} + |\Delta_{\mathrm{HL}}(1)| + \frac{\xi(AH/\log N)^{m}}{m!}\exp\left(\frac{AH}{\log N}\right).
\end{equation}
These inequalities do not require $m$ or $L$ to be fixed.
\end{lemma}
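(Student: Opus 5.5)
The plan is to expand $R_{\mathrm{HL}}(m,j)$ term by term, reindex by the tuple size $k = m + \ell$, and apply the triangle inequality. The terms with $k = 0$ and $k = 1$ are isolated, and the envelopes $e_{k}$ are used only for $k \ge 2$. For $j \in \{L, L+1\}$ we have
\begin{equation*}
|R_{\mathrm{HL}}(m, j)| \le \sum_{k \, = \, m}^{m + j}\binom{k}{m}|\Delta_{\mathrm{HL}}(k)| \le \sum_{k \, = \, m}^{K}\binom{k}{m}|\Delta_{\mathrm{HL}}(k)|,
\end{equation*}
since $m + j \le K$. This bound is the same for both values of $j$, so it controls the maximum.

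I then treat the small values of $k$ separately, according to $m$. If $m = 0$, the $k = 0$ term is $\Delta_{\mathrm{HL}}(0) = \max\{2-M,0\}/N \le 2/N$ by Lemma~\ref{lem:empty-and-singleton-hl-residuals}, with weight $\binom{0}{0} = 1$. If $m \le 1$, the $k = 1$ term has weight $\binom{1}{m} = 1$ and contributes $|\Delta_{\mathrm{HL}}(1)|$. If $m \ge 2$, neither term occurs. In every case the remaining terms run over $\max\{2, m\} \le k \le K$, where $|\Delta_{\mathrm{HL}}(k)| \le e_{k}$. This gives \eqref{eq:weighted-hl-positive-envelope}, with the first two terms serving as upper bounds even when they are absent.

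For \eqref{eq:weighted-hl-factorial-envelope}, I substitute the envelope \eqref{eq:averaged-hl-factorial-envelope} and write $x := AH/\log N$. The key simplification is the identity $\binom{k}{m}/k! = 1/(m!(k-m)!)$, which gives
\begin{equation*}
\sum_{k \, = \, \max\{2,m\}}^{K}\binom{k}{m}\frac{\xi x^{k}}{k!} \le \frac{\xi x^{m}}{m!}\sum_{\ell \, \ge \, 0}\frac{x^{\ell}}{\ell!} = \frac{\xi x^{m}}{m!}e^{x}.
\end{equation*}
This uses only that every term is nonnegative, so extending the sum to all $\ell \ge 0$ increases it.

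No step here is a genuine obstacle. The only point that needs care is the bookkeeping for $m \in \{0, 1\}$, where the $k = 0$ and $k = 1$ terms must be matched correctly to $2/N$ and $|\Delta_{\mathrm{HL}}(1)|$, since no envelope is assumed for them. Nothing in the argument depends on $m$ or $L$ being fixed.
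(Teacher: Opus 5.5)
Your proposal is correct and follows the paper's proof essentially line for line. You reindex by $k = m + \ell$ and take absolute values. You bound the $k = 0$ term by $2/N$ and the $k = 1$ term, whose coefficient has absolute value $1$, by $|\Delta_{\mathrm{HL}}(1)|$. You then use $\binom{k}{m}x^{k}/k! = (x^{m}/m!)\,x^{k-m}/(k-m)!$ and complete the nonnegative exponential series. Your explicit case split on $m \in \{0,1\}$ and the remark that $m + j \le K$ for both $j \in \{L, L+1\}$ simply spell out what the paper states briefly.
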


\begin{proof}
In the definition of $R_{\mathrm{HL}}(m, j)$, put $k = m + \ell$ and take absolute values. The term $k = 0$, when present, is at most $2/N$. The coefficient of the term $k = 1$, when present, has absolute value $1$. The remaining terms are bounded by the sum in \eqref{eq:weighted-hl-positive-envelope}.

For the second assertion, put $a := AH/\log N$. The factorial in \eqref{eq:averaged-hl-factorial-envelope} cancels part of the binomial coefficient:
\begin{equation*}
\binom{k}{m}\frac{a^{k}}{k!} = \frac{a^{m}}{m!}\frac{a^{k - m}}{(k - m)!} \qquad (k \ge m).
\end{equation*}
Completing the resulting positive exponential series gives
\begin{equation*}
\sum_{k \, = \, \max\{2, m\}}^{K}\binom{k}{m}\frac{\xi a^{k}}{k!} \le \frac{\xi a^{m}}{m!}\sum_{\ell \, = \, 0}^{\infty}\frac{a^{\ell}}{\ell!} = \frac{\xi a^{m}}{m!}e^{a}.
\end{equation*}
This proves \eqref{eq:weighted-hl-factorial-envelope}.
\end{proof}

For fixed $A, U$, the factor multiplying $\xi$ in \eqref{eq:weighted-hl-factorial-envelope} is bounded independently of $L$. It is even bounded independently of $m$, since $a^{m}/m! \le e^{a}$ for $a \ge 0$. Thus the growing number of terms does not by itself entail a loss: the factorial decay absorbs the binomial weights.

More explicitly, retain the ranges of Corollary~\ref{cor:weighted-hl-transfer}, impose $0 \le M \le CN$, and put $K = m + L + 1$. A sufficient averaged Hardy--Littlewood hypothesis is that, for some fixed $A \ge 1$,
\begin{equation}
\label{eq:normalized-averaged-hl-condition}
\max_{2 \, \le \, k \, \le \, K}\frac{k!}{A^{k}}\left(\frac{\log N}{H}\right)^{k}|\Delta_{\mathrm{HL}}(k; H, M, N)| = o(1/H),
\end{equation}
uniformly over the stated ranges. Under this hypothesis, apply \eqref{eq:weighted-hl-factorial-envelope} with $\xi$ equal to the maximum in \eqref{eq:normalized-averaged-hl-condition}, and use Lemma~\ref{lem:empty-and-singleton-hl-residuals}. Thus \eqref{eq:normalized-averaged-hl-condition} implies \eqref{eq:weighted-hl-sufficient-condition}.

One can weaken \eqref{eq:normalized-averaged-hl-condition} by prescribing different bounds for different $k$ and requiring only that the sum in \eqref{eq:weighted-hl-positive-envelope} be $o(1/H)$. One can weaken it further by estimating $R_{\mathrm{HL}}$ with its signs intact. The bound \eqref{eq:averaged-hl-factorial-envelope} is convenient because its factorial decay absorbs the binomial weights, leaving only the requirement $\xi = o(1/H)$.

For comparison, we give two sufficient hypotheses on individual tuples. They sacrifice cancellation over sets of shifts but show how much stronger a power saving in $N$ would be than the precision required here.

For both hypotheses, fix an integer $m \ge 0$ and real numbers $C, U, \delta > 0$. Consider integers $H$, $M$ and $N$ with $H \to \infty$, $0 \le M \le CN$ and $H \le U\log N$. Let $L$ be the least even integer satisfying \eqref{eq:transfer-truncation-order}, and put $K := m + L + 1$.

\begin{hypothesis}[Logarithmic saving]
\label{hyp:logarithmic-saving}
There are fixed constants $A \ge 1$ and $\sigma > 0$ such that, uniformly for $2 \le k \le K$ and all $k$-element subsets $\mathcal{H}$ of $[H]$,
\begin{equation}
\label{eq:individual-hl-logarithmic-saving}
\left|\sum_{n \, = \, 1}^{N}\prod_{h \, \in \, \mathcal{H}}\mathbf{1}_{\mathcal{P}}(M + n + h) - \mathfrak{S}(\mathcal{H})\int_{\max\{2, M\}}^{M + N}\frac{dt}{(\log t)^{k}}\right| \ll \frac{A^{k}N}{(\log N)^{k + 1 + \sigma}}.
\end{equation}
The implied constant may depend on $m, C, U, \delta, A, \sigma$ but is independent of $k$, $\mathcal{H}$, $H$, $M$ and $N$.
\end{hypothesis}

\begin{hypothesis}[Power saving]
\label{hyp:power-saving}
There are fixed constants $A \ge 1$ and $0 < \theta < 1$ such that the absolute value on the left side of \eqref{eq:individual-hl-logarithmic-saving} is
\begin{equation}
\label{eq:individual-hl-power-saving}
O\left(A^{k}N^{1 - \theta}\right)
\end{equation}
uniformly over the same tuples and parameter ranges. The implied constant may depend on $m, C, U, \delta, A, \theta$ but is independent of $k$, $\mathcal{H}$, $H$, $M$ and $N$.
\end{hypothesis}

\begin{proposition}
\label{prop:individual-hl-sufficient-conditions}
With the parameters and ranges specified above, either Hypothesis~\ref{hyp:logarithmic-saving} or Hypothesis~\ref{hyp:power-saving} implies \eqref{eq:weighted-hl-sufficient-condition}. Hypothesis~\ref{hyp:logarithmic-saving} also implies the averaged condition \eqref{eq:normalized-averaged-hl-condition}.
\end{proposition}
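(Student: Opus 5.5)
The plan is to bound the averaged residuals $\Delta_{\mathrm{HL}}(k)$ by summing the individual tuple bounds over all $\binom{H}{k}$ subsets, without exploiting cancellation. We then feed the resulting envelopes into Lemma~\ref{lem:weighted-hl-error-envelopes} and use Lemma~\ref{lem:empty-and-singleton-hl-residuals} for $k = 0, 1$. Since $\Delta_{\mathrm{HL}}(k)$ is $1/N$ times the sum over $k$-subsets of the individual residuals, the triangle inequality and $\binom{H}{k} \le H^{k}/k!$ give, for $2 \le k \le K$,
\begin{equation*}
|\Delta_{\mathrm{HL}}(k)| \le \frac{1}{N}\binom{H}{k}\max_{|\mathcal{H}| = k}|\text{individual residual}| .
\end{equation*}

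Under Hypothesis~\ref{hyp:logarithmic-saving}, this yields
\begin{equation*}
|\Delta_{\mathrm{HL}}(k)| \ll \frac{H^{k}}{k!}\frac{A^{k}}{(\log N)^{k + 1 + \sigma}} = \frac{\xi A^{k}}{k!}\left(\frac{H}{\log N}\right)^{k}, \qquad \xi \asymp (\log N)^{-1-\sigma}.
\end{equation*}
Here the implied constant is independent of $k$. Since $H \le U\log N$, we have $H\xi \ll U(\log N)^{-\sigma} \to 0$, so $\xi = o(1/H)$ uniformly. This is exactly \eqref{eq:normalized-averaged-hl-condition}. By the remark following Lemma~\ref{lem:weighted-hl-error-envelopes}, that condition implies \eqref{eq:weighted-hl-sufficient-condition}. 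Explicitly, \eqref{eq:weighted-hl-factorial-envelope} gives the bound $2/N + |\Delta_{\mathrm{HL}}(1)| + \xi e^{2AU}$, since $a = AH/\log N \le AU$ and $a^{m}/m! \le e^{a}$. Each of the three terms is $o(1/H)$, the middle one by Lemma~\ref{lem:empty-and-singleton-hl-residuals}.

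Under Hypothesis~\ref{hyp:power-saving}, we instead use \eqref{eq:weighted-hl-positive-envelope} with $e_{k} \ll (AH)^{k}N^{-\theta}/k!$. The sum $\sum_{k \le K}\binom{k}{m}e_{k}$ is then at most $N^{-\theta}(AH)^{m}e^{AH}/m!$. The main obstacle is that $H$ need not be small compared with $\log N$ times a small constant, so $e^{AH}$ can be as large as $N^{AU}$. That factor would swamp $N^{-\theta}$ if $AU > \theta$, so we must not complete the exponential series. We bound the finite sum directly instead: it has $K \ll_{m,\delta} \log H/\log\log H$ terms, each at most $\binom{K}{m}(AH)^{K}$. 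The whole sum is therefore at most $N^{-\theta}\exp(O_{m,\delta}((\log H)^{2}/\log\log H))$. As in the proof of Corollary~\ref{cor:weighted-hl-transfer}, this exponent is $o(H)$ and hence $o(\log N)$, since $H \le U\log N$. Even more simply, $\log H \ll \log\log N$, so the factor is $N^{o(1)}$. The sum is therefore $N^{-\theta + o(1)}$, which is $o(1/H)$ because $H \ll \log N$. Adding the $k = 0, 1$ contributions from Lemma~\ref{lem:empty-and-singleton-hl-residuals} completes \eqref{eq:weighted-hl-sufficient-condition} in this case too. The only care needed throughout is that each implied constant is independent of $k$ and $\mathcal{H}$, which the hypotheses guarantee.
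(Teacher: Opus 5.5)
Your proposal is correct and follows the paper's proof essentially step for step: you bound $|\Delta_{\mathrm{HL}}(k)|$ by $\frac{1}{N}\binom{H}{k}$ times the individual-tuple bound, complete the exponential series via the factorial envelope in the logarithmic-saving case, and keep the sum finite in the power-saving case so the cost is $\exp(O((\log H)^{2}/\log\log H)) = N^{o(1)}$. You also handle $k = 0, 1$ through the empty/singleton lemma, exactly as the paper does. Your remark that completing the series would produce a factor $e^{AH}$, which can swamp $N^{-\theta}$, is precisely why the paper also keeps the sum finite there.
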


\begin{proof}
If $E_{k}$ bounds the absolute residual for every $k$-element tuple, summing over the tuples and dividing by $N$ gives
\begin{equation}
\label{eq:individual-to-averaged-hl-error}
|\Delta_{\mathrm{HL}}(k)| \le \frac{1}{N}\binom{H}{k}E_{k} \le \frac{H^{k}}{Nk!}E_{k}.
\end{equation}
For $k > H$ the first bound is zero, so there is no exception in that range. Under \eqref{eq:individual-hl-logarithmic-saving}, we obtain
\begin{equation*}
|\Delta_{\mathrm{HL}}(k)| \ll \frac{1}{(\log N)^{1 + \sigma}}\frac{A^{k}}{k!}\left(\frac{H}{\log N}\right)^{k} \qquad (2 \le k \le K).
\end{equation*}
Since $H/(\log N)^{1 + \sigma} \le U/(\log N)^{\sigma} \to 0$, this proves \eqref{eq:normalized-averaged-hl-condition}. In fact, \eqref{eq:weighted-hl-factorial-envelope} and \eqref{eq:transfer-singleton-residual}, with $B > 2 + \sigma$, give
\begin{equation*}
\max_{j \, \in \, \{L, L + 1\}}|R_{\mathrm{HL}}(m, j)| \ll_{m, C, U, A, \sigma} \frac{1}{(\log N)^{1 + \sigma}}.
\end{equation*}
The fixed implied constant in the hypothesis is understood to be included in the implied constant here.

Under \eqref{eq:individual-hl-power-saving}, formula \eqref{eq:individual-to-averaged-hl-error} instead gives
\begin{equation*}
|\Delta_{\mathrm{HL}}(k)| \ll N^{-\theta}\frac{(AH)^{k}}{k!}.
\end{equation*}
This time we keep the sum finite. Taking $H$ large enough that $AH \ge 1$, Lemma~\ref{lem:weighted-hl-error-envelopes} gives
\begin{equation*}
\max_{j \, \in \, \{L, L + 1\}}|R_{\mathrm{HL}}(m, j)| \ll \frac{2}{N} + |\Delta_{\mathrm{HL}}(1)| + N^{-\theta}\frac{L + 2}{m!}(AH)^{K}.
\end{equation*}
Here we used $1/(k - m)! \le 1$ and bounded each of the at most $L + 2$ powers by $(AH)^{K}$. For fixed $m$, $A$ and $\delta$,
\begin{equation*}
\log\left((L + 2)(AH)^{K}\right) = O_{m, A, \delta}\left(\frac{(\log H)^{2}}{\log\log H}\right) = o(\log N).
\end{equation*}
The last equality follows from $H \le U\log N$. The last term in the preceding bound is therefore $N^{-\theta + o(1)}$, hence $o(1/H)$. The empty and singleton contributions are $o(1/H)$ by Lemma~\ref{lem:empty-and-singleton-hl-residuals}.
\end{proof}

The uniformity in $k$ in these hypotheses is essential. A statement with an unspecified implied constant depending on $k$, valid for each fixed $k$, does not justify substitution of $k \le K(H)$ tending to infinity. In contrast, \eqref{eq:individual-hl-power-saving} allows a stated exponential dependence $A^{k}$; the proof shows explicitly that its cost is $N^{o(1)}$ in our range. In particular, an error $O(N^{1/2 + \epsilon})$ uniform in these tuples, with fixed $0 < \epsilon < 1/2$, is more than sufficient.

The hypotheses were written for all sets of shifts to agree with the definition of $S(k; H)$. For inadmissible $\mathcal{H}$, the singular series is zero and the tuple count is at most $k$, as observed in Subsection~\ref{subsec:hardy-littlewood-residual}. Both proposed individual-tuple error bounds exceed $k$ uniformly for $2 \le k \le K$ once $N$ is sufficiently large: indeed, $K\log\log N = o(\log N)$, so the right side of \eqref{eq:individual-hl-logarithmic-saving} without its implied constant is $N^{1 - o(1)}$, while $N^{1 - \theta}$ also exceeds $K$. Thus the substantive assumptions concern admissible tuples of size at least two.

\subsection{The corrected Poisson asymptotic}
\label{subsec:conditional-corrected-poisson-asymptotic}

We finish by spelling out the conclusion when the interval length is comparable to the average spacing between primes. In this range the Poisson main term stays on a fixed scale for fixed $m$, while the arithmetic correction contains terms of orders $(\log H)/H$ and $1/H$. The remainder below is smaller than both of these scales.

\begin{theorem}
\label{thm:conditional-corrected-poisson-asymptotic}
Fix an integer $m \ge 0$, real numbers $C, \delta > 0$, and $0 < V \le U$. As $N \to \infty$, consider integers in the ranges
\begin{equation*}
0 \le M \le CN, \qquad V\log N \le H \le U\log N.
\end{equation*}
Let $L$ be the least even integer satisfying \eqref{eq:transfer-truncation-order}, and suppose that \eqref{eq:weighted-hl-sufficient-condition} holds uniformly in these ranges. This hypothesis follows, in particular, from \eqref{eq:normalized-averaged-hl-condition}, or from either individual-tuple hypothesis in Proposition~\ref{prop:individual-hl-sufficient-conditions}.

Put
\begin{equation*}
\lambda := \frac{H}{\log N + c(M/N)}, \qquad c(\alpha) := (\alpha + 1)\log(\alpha + 1) - \alpha\log\alpha - 1,
\end{equation*}
with $0\log 0 = 0$, as in Subsection~\ref{subsec:averaging-effective-mean}. Then, uniformly over the stated ranges,
\begin{equation}
\label{eq:conditional-corrected-poisson-asymptotic}
P(m; H, M, N) = \frac{e^{-\lambda}\lambda^{m}}{m!}\left\{1 - \frac{\log H + \log(2\pi) + \gamma - 1}{2H}\left((m - \lambda)^{2} - m\right)\right\} + o(1/H).
\end{equation}
The same formula holds with $\lambda$ replaced throughout by $\mu(H, M, N)$.
\end{theorem}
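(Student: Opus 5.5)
The plan is to combine the unconditional transfer statement with the density-averaging estimates; essentially no new analysis is required. Since $H \ge V\log N$, we have $H \to \infty$ as $N \to \infty$. We also have $H \le U\log N$, so the ranges of Corollary~\ref{cor:weighted-hl-transfer} apply. By hypothesis \eqref{eq:weighted-hl-sufficient-condition} holds uniformly, and $0 \le M \le CN$. Hence \eqref{eq:conditional-mean-prime-count} gives
\begin{equation*}
P(m; H, M, N) = Q(m; \mu, H) + o(1/H)
\end{equation*}
uniformly over the stated ranges. Unwinding the definition \eqref{eq:corrected-poisson-expression} of $Q$ and of $\eta(H)$, this is exactly \eqref{eq:conditional-corrected-poisson-asymptotic} with $\lambda$ replaced by $\mu$, which proves the final sentence of the theorem.

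For the version with $\lambda = H/(\log N + c(M/N))$, I will replace $\mu$ by $\lambda$ in $Q(m; \cdot, H)$. Proposition~\ref{prop:averaging-effective-mean} gives $|\mu - \lambda| \ll_{C} H/(\log N)^{3}$. Both parameters are bounded in terms of $C$ and $U$ for large $N$: $\mu \ll_C U$ by \eqref{eq:averaging-mean-comparison}, and $\lambda \le 2U$ once $\log N + c(\alpha) \ge \tfrac12\log N$. The first-derivative bound in Lemma~\ref{lem:averaging-uniform-derivatives} and the mean value theorem then give
\begin{equation*}
|Q(m; \mu, H) - Q(m; \lambda, H)| \ll_{C, U} \frac{H}{(\log N)^{3}}.
\end{equation*}
Multiplying by $H$, this is at most $U^{2}/\log N \to 0$, so it is $o(1/H)$. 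This is the same computation as in Corollary~\ref{cor:averaging-shifted-parameter}. Alternatively, one could combine that corollary with \eqref{eq:conditional-integrated-prime-count} directly.

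Finally, the parenthetical claim that the hypothesis follows from \eqref{eq:normalized-averaged-hl-condition} or from the individual-tuple hypotheses is already established. It follows from Lemma~\ref{lem:weighted-hl-error-envelopes} together with Lemma~\ref{lem:empty-and-singleton-hl-residuals}, and from Proposition~\ref{prop:individual-hl-sufficient-conditions}, so it only needs to be cited. The one point to check carefully, and the only real obstacle, is uniformity. The $o(1/H)$ in Corollary~\ref{cor:weighted-hl-transfer} must be uniform over the whole family with $M \le CN$ and $V\log N \le H \le U\log N$, and the implied constants in the averaging step must depend only on $C$, $U$ and the fixed $m$. This holds because each cited estimate is stated uniformly in $M$ and $H$ within these ranges. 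The lower bound $V$ is used only to guarantee $H \to \infty$, so that $L$ is well defined and the analytic errors are $o(1/H)$.
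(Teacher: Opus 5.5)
Your proposal is correct and follows essentially the paper's proof. Corollary~\ref{cor:weighted-hl-transfer} gives the version with $\mu$, and the passage to $\lambda$ is the mean-value comparison $|Q(m;\mu,H) - Q(m;\lambda,H)| \ll_{C,U} H/(\log N)^{3}$; the paper packages that comparison in Corollary~\ref{cor:averaging-shifted-parameter} and applies it to the integrated conclusion \eqref{eq:conditional-integrated-prime-count}, which is the alternative you also mention.
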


\begin{proof}
Corollary~\ref{cor:weighted-hl-transfer} gives \eqref{eq:conditional-mean-prime-count}. Equivalently, its integrated conclusion \eqref{eq:conditional-integrated-prime-count}, followed by Corollary~\ref{cor:averaging-shifted-parameter}, gives
\begin{equation*}
P(m; H, M, N) = Q(m; \lambda, H) + o(1/H).
\end{equation*}
Substitute the definitions of $Q$ and $\eta(H)$ to obtain \eqref{eq:conditional-corrected-poisson-asymptotic}. Substitution into \eqref{eq:conditional-mean-prime-count} gives the version with $\mu$.
\end{proof}

In particular, the parameter in \eqref{eq:conditional-corrected-poisson-asymptotic} is
\begin{equation*}
\lambda = \frac{H}{\log N - 1} \quad (M = 0), \qquad \lambda = \frac{H}{\log N + 2\log 2 - 1} \quad (M = N).
\end{equation*}
Each specialization requires the stated Hardy--Littlewood hypothesis on the corresponding averaging range; the case $M = N$ is included in the theorem when $C \ge 1$.

For a quantitative version, under \eqref{eq:individual-hl-logarithmic-saving} the error in \eqref{eq:conditional-corrected-poisson-asymptotic} is bounded by
\begin{equation*}
O_{m, C, U, V, A, \sigma, \delta, \epsilon}\left(H^{-3/2 + \epsilon} + H^{-1 - \delta + o(1)} + (\log N)^{-1 - \sigma} + (\log N)^{-2}\right)
\end{equation*}
for any fixed $0 < \epsilon < 1/2$, with the fixed implied constant in the hypothesis also allowed. This follows from \eqref{eq:transfer-integrated-bound}, the estimates in the proof of Corollary~\ref{cor:weighted-hl-transfer}, and Corollary~\ref{cor:averaging-shifted-parameter}. The factorial term can instead be retained in the explicit form \eqref{eq:transfer-analytic-error} if a bound without $o(1)$ in the exponent is preferred.

No hypothesis on the Montgomery--Soundararajan residual or on the truncation or density-averaging residuals is required. The sole unproved input is the specified control of prime-tuple counts. When $H/\log N \to \kappa > 0$, the theorem in particular recovers the Poisson limit $P(m; H, M, N) \to e^{-\kappa}\kappa^{m}/m!$; the displayed formula retains the arithmetic correction and the dependence of the effective mean on the averaging interval.

\begin{proof}[Proof of Theorem~\ref{thm:intro-corrected-poisson}]
The introductory Hardy--Littlewood hypothesis implies \eqref{eq:weighted-hl-sufficient-condition} by Proposition~\ref{prop:individual-hl-sufficient-conditions}: choose any fixed $0 < \epsilon < 1/2$, so that its error is a power saving in $N$, and note that the required tuple orders satisfy $m + L + 1 \le \log H$ for sufficiently large $H$. Apply Theorem~\ref{thm:conditional-corrected-poisson-asymptotic} with $M = 0$, observing that $c(0) = -1$ and hence $\lambda = H/(\log N - 1)$.
\end{proof}

\section{A random model for the arithmetic correction}
\label{sec:random-model-arithmetic-correction}

We now explain the arithmetic correction through a random model. Recall that
\begin{equation*}
Q(m; u, H) = \frac{e^{-u}u^{m}}{m!}\left\{1 - \frac{\eta(H)}{2}\left((m - u)^{2} - m\right)\right\}, \qquad \eta(H) = \frac{\log H + \log(2\pi) + \gamma - 1}{H},
\end{equation*}
with values at $u = 0$ understood by continuity. In the model below, local divisibility conditions reduce the count variance relative to the Poisson value $u$. The polynomial multiplying $\eta(H)$ records the first effect of this variance deficit on the probabilities of individual counts. We use the finite sieve from Section~\ref{sec:singular-series-finite-sieve}: its second moment suggests both the polynomial and its coefficient, while the higher-moment estimates in that section justify the approximation.

The construction belongs to an established family of refinements of Cram\'er's model. Granville and Lumley \cite[Sections~5 and~8]{GL2023} first sieve out multiples of small primes and then retain the surviving integers independently with an adjusted probability. Banks, Ford and Tao \cite[Section~1.3]{BFT2023} instead construct a random set by excluding independently chosen residue classes, with the sieve level depending on the location of the interval. We use random residue classes on a finite interval, followed by independent thinning. This gives a particularly direct explanation of the correction in the range where the expected count remains bounded.

\subsection{Sieve first, then toss coins}
\label{subsec:random-model-construction}

Fix $U > 0$, let $H$ tend to infinity through the integers, and prescribe a mean $0 \le u \le U$. As in Subsection~\ref{subsec:finite-sieve-probability-space}, put
\begin{equation*}
y := H^{3}, \qquad q := \prod_{p \, \le \, y}p, \qquad \rho := \prod_{p \, \le \, y}\left(1 - \frac{1}{p}\right), \qquad b := H\rho.
\end{equation*}
For each prime $p \le y$, choose a residue class $a_{p} \bmod p$ uniformly and independently, and remove all members of $[H]$ in that class. Let $Y$ be the number of survivors. Each position survives with probability $\rho$, so $\mathbb{E}Y = b$. The Chinese remainder theorem identifies these choices with a uniformly chosen translate modulo $q$, exactly as in the earlier construction.

Independently retain each survivor with probability
\begin{equation*}
\theta := \frac{u}{b},
\end{equation*}
and let $Z$ be the number retained. Mertens' product estimate gives $b \asymp H/\log H$, so there is a threshold $H_{0}(U)$ such that $0 \le \theta \le 1$ whenever $H \ge H_{0}(U)$ and $0 \le u \le U$. Throughout this section, statements about $Z$ are restricted to these ranges. Conditional on $Y$, the distribution of $Z$ is binomial with parameters $Y$ and $\theta$. Consequently,
\begin{equation}
\label{eq:random-model-mean-and-variance}
\mathbb{E}Z = u, \qquad \operatorname{Var}Z = \theta(1 - \theta)b + \theta^{2}\operatorname{Var}Y = u + \theta^{2}\left(\operatorname{Var}Y - b\right).
\end{equation}
The variance identity follows by adding the average conditional variance, $\theta(1 - \theta)b$, to the variance of the conditional mean, $\theta^{2}\operatorname{Var}Y$.

The final coin tosses are independent, but the set on which they act is random and reflects the residue classes removed by the sieve. Thus the indicators of retention at different positions are dependent after averaging over the sieve. If we retained every position independently with probability $u/H$, the result would instead have variance $u - u^{2}/H$. The difference between the two models can therefore be read from the variance of $Y$.

\subsection{The variance from pairs of survivors}
\label{subsec:random-model-pair-variance}

Consider two positions separated by $d$, where $1 \le d < H$. If $p \mid d$, they occupy the same residue class modulo $p$, and both survive this stage with probability $1 - 1/p$. Otherwise they occupy two classes, and the probability is $1 - 2/p$. Independence between the choices for different primes gives the joint survival probability
\begin{equation*}
a_{y}(d) := \prod_{\substack{p \, \le \, y \\ p \, \mid \, d}}\left(1 - \frac{1}{p}\right)\prod_{\substack{p \, \le \, y \\ p \, \nmid \, d}}\left(1 - \frac{2}{p}\right).
\end{equation*}
There are $H - d$ unordered pairs with separation $d$. Since $Y(Y - 1)$ counts ordered pairs of distinct survivors,
\begin{equation}
\label{eq:random-model-variance-pair-sum}
\operatorname{Var}Y - b = 2\sum_{d \, = \, 1}^{H - 1}(H - d)a_{y}(d) - H^{2}\rho^{2}.
\end{equation}
This is an exact calculation in the finite probability space. No conjecture about primes has entered it.

After division by $\rho^{2}$, the joint survival probability is the finite Euler product for the pair singular series. Write
\begin{equation*}
s(d) := \mathfrak{S}(\{0, d\}), \qquad \mathfrak{c} := \prod_{p \, > \, 2}\left(1 - \frac{1}{(p - 1)^{2}}\right).
\end{equation*}
The local factors give $s(d) = 0$ for odd $d$ and
\begin{equation}
\label{eq:random-model-even-pair-series}
s(2r) = 2\mathfrak{c}\prod_{\substack{p \, \mid \, r \\ p \, > \, 2}}\frac{p - 1}{p - 2} \qquad (r \ge 1).
\end{equation}
Thus the singular series arises here simply by comparing the probability that both positions survive with the product of their individual survival probabilities.

For $p > y > H$, the two positions are distinct modulo $p$, so
\begin{equation*}
s(d) = \frac{a_{y}(d)}{\rho^{2}}\prod_{p \, > \, y}\left(1 - \frac{1}{(p - 1)^{2}}\right), \qquad \prod_{p \, > \, y}\left(1 - \frac{1}{(p - 1)^{2}}\right) = 1 + O(1/y).
\end{equation*}
This includes odd $d$, when both sides of the first identity vanish. It remains to understand one weighted sum over the separations $d$.

\subsection{Where the logarithm comes from}
\label{subsec:random-model-logarithmic-deficit}

The main feature of that sum can be seen by the elementary argument in Goldston \cite[Lemma~3, pp.~161--162]{GOL1984}. We give the calculation because it explains the factor $\log H$ without requiring a general singular-series average.

Define $f(d)$ to be zero unless $d$ is odd and squarefree, and in the remaining cases put
\begin{equation*}
f(d) := \prod_{p \, \mid \, d}\frac{1}{p - 2}.
\end{equation*}
In particular, $f(1) = 1$. Expanding the product in \eqref{eq:random-model-even-pair-series} gives
\begin{equation*}
s(2r) = 2\mathfrak{c}\sum_{d \, \mid \, r}f(d).
\end{equation*}
Put $x = H/2$, allowing $x$ to be a half-integer. On changing the order of summation, we obtain
\begin{equation}
\label{eq:random-model-weighted-pair-divisors}
2\sum_{d \, = \, 1}^{H - 1}(H - d)s(d) = 8\mathfrak{c}\sum_{d \, \le \, x}d f(d)\sum_{r \, \le \, x/d}\left(\frac{x}{d} - r\right).
\end{equation}
The endpoint term is zero when $x/d$ is an integer. The useful observation is the elementary identity
\begin{equation*}
\sum_{r \, \le \, v}(v - r) = \frac{v^{2}}{2} - \frac{v}{2} + O(1) \qquad (v \ge 1).
\end{equation*}
The quadratic term produces the leading term $H^{2}$. The linear term produces the logarithmic deficit. More explicitly, \eqref{eq:random-model-weighted-pair-divisors} becomes
\begin{equation}
\label{eq:random-model-pair-sum-three-terms}
2\sum_{d \, = \, 1}^{H - 1}(H - d)s(d) = 4\mathfrak{c}x^{2}\sum_{d \, \le \, x}\frac{f(d)}{d} - 4\mathfrak{c}x\sum_{d \, \le \, x}f(d) + O\left(\sum_{d \, \le \, x}d f(d)\right).
\end{equation}
The estimates needed here are
\begin{equation}
\label{eq:random-model-divisor-weight-estimates}
\sum_{d \, \le \, x}\frac{f(d)}{d} = \frac{1}{\mathfrak{c}} + O(1/x), \qquad \sum_{d \, \le \, x}f(d) = \frac{\log x}{2\mathfrak{c}} + O(1), \qquad \sum_{d \, \le \, x}d f(d) \ll x.
\end{equation}

For completeness, these estimates have a short verification. Define a multiplicative function $g$ by $g(1) = 1$, $g(2) = -1/2$, and, for odd primes $p$, by 
\begin{equation*}
g(p) = \frac{2}{p(p - 2)} \qquad \text{and} \qquad g(p^{2}) = -\frac{1}{p(p - 2)}.
\end{equation*}
Its values at all other prime powers are zero. Checking prime powers gives
\begin{equation*}
f(n) = \sum_{d r \, = \, n}\frac{g(d)}{r}, \qquad \sum_{d \, = \, 1}^{\infty}|g(d)|(1 + \log d) < \infty, \qquad \sum_{d \, = \, 1}^{\infty}g(d) = \frac{1}{2\mathfrak{c}}.
\end{equation*}
The absolute convergence follows from $|g(p)| + |g(p^{2})| \ll p^{-2}$ and the convergence of the same prime sum with an additional factor $\log p$. The last identity follows by multiplying the local factors: the factor at $2$ is $1/2$, and the factor at an odd prime is $1 + 1/(p(p - 2))$. Summing the convolution and using the harmonic sum now gives
\begin{equation*}
\sum_{n \, \le \, x}f(n) = \sum_{d \, \le \, x}g(d)\left(\log(x/d) + O(1)\right) = \frac{\log x}{2\mathfrak{c}} + O(1).
\end{equation*}
Also, the Euler product gives $\sum_{d \ge 1}f(d)/d = 1/\mathfrak{c}$. Partial summation applied to the estimate just proved bounds its tail by $O(1/x)$ and gives $\sum_{d \le x}d f(d) \ll x$. This proves \eqref{eq:random-model-divisor-weight-estimates}.

Substituting these estimates into \eqref{eq:random-model-pair-sum-three-terms}, and recalling $x = H/2$, yields
\begin{equation}
\label{eq:random-model-leading-pair-deficit}
2\sum_{d \, = \, 1}^{H - 1}(H - d)s(d) = H^{2} - H\log H + O(H).
\end{equation}
The logarithm has come from the harmonic growth of the divisor weights, multiplied by the linear term in the triangular sum. In particular, its appearance does not require a conjecture about prime tuples or an estimate involving large tuple sizes.

Keeping the term of order $H$ requires a more precise evaluation. Goldston \cite[equation~(33), p.~295]{GOL1990} records
\begin{equation}
\label{eq:random-model-full-pair-average}
2\sum_{d \, = \, 1}^{H - 1}(H - d)s(d) = H^{2} - H\left(\log H + \log(2\pi) + \gamma - 1\right) + O_{\epsilon}(H^{1/2 + \epsilon}).
\end{equation}
This is also the pair average used by Montgomery and Soundararajan \cite[equations~(47)--(48)]{MS2004}. The elementary calculation above identifies the leading deficit; \eqref{eq:random-model-full-pair-average} supplies its constant term.

Replacing $s(d)$ by $a_{y}(d)/\rho^{2}$ in this sum costs $O(H^{2}/y)$, by the common Euler-product tail and \eqref{eq:random-model-leading-pair-deficit}. Equations \eqref{eq:random-model-mean-and-variance} and \eqref{eq:random-model-variance-pair-sum} therefore give
\begin{equation}
\label{eq:random-model-variance-deficit}
\operatorname{Var}Z = u - u^{2}\eta(H) + O_{U, \epsilon}(H^{-3/2 + \epsilon}), \qquad \eta(H) = \frac{\log H + \log(2\pi) + \gamma - 1}{H}.
\end{equation}
Here $y = H^{3}$ makes the error from the omitted Euler factors smaller than the displayed error. The coefficient of $u^{2}$ in the variance deficit is thus larger by a factor of order $\log H$ than the coefficient $1/H$ in the independent binomial model.

\subsection{From the variance deficit to the correction}
\label{subsec:random-model-charlier-correction}

We next explain why a variance deficit suggests the particular polynomial in $Q$. For a bounded nonnegative integer-valued random variable $W$ of mean $u$, expansion at $w = 0$ gives
\begin{equation}
\label{eq:random-model-factorial-cumulants}
\log\mathbb{E}(1 + w)^{W} = uw + \frac{\operatorname{Var}W - u}{2}w^{2} + O_{W}(w^{3}).
\end{equation}
Indeed, the first two coefficients of $\mathbb{E}(1 + w)^{W}$ are $\mathbb{E}W$ and $\mathbb{E}(W(W - 1))/2$, and taking the logarithm subtracts $u^{2}/2$ from the second coefficient. These coefficients of the logarithm, multiplied by the corresponding factorials, are called the \emph{factorial cumulants}. Only the first two are needed for this explanation.

If the higher terms are negligible at the required precision, substituting \eqref{eq:random-model-variance-deficit} into \eqref{eq:random-model-factorial-cumulants} and exponentiating suggests
\begin{equation*}
\mathbb{E}z^{Z} \approx e^{u(z - 1)}\left(1 - \frac{\eta(H)u^{2}}{2}(z - 1)^{2}\right).
\end{equation*}
This step is a prediction from the second moment, not a consequence of the second moment alone. In particular, the Taylor remainder in \eqref{eq:random-model-factorial-cumulants} need not be uniform as the distribution changes with $H$. To obtain probabilities of individual counts, one needs control of the generating function beyond a merely formal expansion at $z = 1$.

For the present model that control was proved in Proposition~\ref{prop:singular-series-generating-function}. Uniformly for $0 \le u \le U$ and complex $z$ with $|z| \le 2$,
\begin{equation}
\label{eq:random-model-generating-function}
\mathbb{E}z^{Z} = e^{u(z - 1)}\left(1 - \frac{\eta(H)u^{2}}{2}(z - 1)^{2}\right) + O_{U, \epsilon}(H^{-3/2 + \epsilon}).
\end{equation}
The mechanism is visible without repeating that proof. Conditional on $Y$, the generating function is $(1 + \theta(z - 1))^{Y}$. Writing $Y = b + (Y - b)$, the binomial expansion contributes $-b\theta^{2}(z - 1)^{2}/2$, while averaging the centered fluctuation contributes $\theta^{2}\operatorname{Var}Y\,(z - 1)^{2}/2$. Their sum is exactly the variance-minus-mean term in \eqref{eq:random-model-mean-and-variance}. A uniform upper bound for $Y$ and its fourth centered moment control the remaining terms. This is where the earlier proof makes the second-moment prediction rigorous.

Put $p_{m}(u) := e^{-u}u^{m}/m!$ for $m \ge 0$. Since $e^{u(z - 1)}$ is its generating function and differentiation twice with respect to $u$ introduces $(z - 1)^{2}$, the coefficient of $z^{m}$ in the main expression in \eqref{eq:random-model-generating-function} is
\begin{equation*}
p_{m}(u) - \frac{\eta(H)u^{2}}{2}p_{m}''(u) = \frac{e^{-u}u^{m}}{m!}\left\{1 - \frac{\eta(H)}{2}\left((m - u)^{2} - m\right)\right\} = Q(m; u, H).
\end{equation*}
Values at $u = 0$ are understood by continuity. The polynomial $((m - u)^{2} - m)$ is the second Poisson--Charlier polynomial in the normalization used here. Cauchy's coefficient formula on $|z| = 2$ now gives the unconditional model estimate
\begin{equation}
\label{eq:random-model-count-distribution}
\mathbb{P}(Z = m) = Q(m; u, H) + O_{U, \epsilon}(2^{-m}H^{-3/2 + \epsilon}) \qquad (m \ge 0).
\end{equation}
The estimate holds uniformly for $H \ge H_{0}(U)$, $0 \le u \le U$ and integers $m \ge 0$. The implied constant depends only on $U$ and $\epsilon$. For $0 < \epsilon < 1/2$, the error is $o(1/H)$.

Thus the local residue conditions predict the variance deficit, and that deficit determines the first correction to the Poisson probabilities once the remaining terms are controlled. Independent Bernoulli trials give the same polynomial, with $1/H$ in place of $\eta(H)$. The polynomial reflects the form of the second-moment correction; its coefficient records the arithmetic.

\subsection{What the model explains}
\label{subsec:random-model-interpretation}

The correction preserves both the total mass and the mean. Its generating function is
\begin{equation*}
\sum_{m \, = \, 0}^{\infty}Q(m; u, H)z^{m} = e^{u(z - 1)}\left(1 - \frac{\eta(H)u^{2}}{2}(z - 1)^{2}\right).
\end{equation*}
Evaluating at $z = 1$ and differentiating there gives the exact identities
\begin{equation*}
\sum_{m \, = \, 0}^{\infty}Q(m; u, H) = 1, \qquad \sum_{m \, = \, 0}^{\infty}m Q(m; u, H) = u
\end{equation*}
and
\begin{equation*}
\sum_{m \, = \, 0}^{\infty}(m - u)^{2}Q(m; u, H) = u - u^{2}\eta(H).
\end{equation*}
All these sums converge absolutely. For $u > 0$, however, $Q(m; u, H)$ becomes negative when $m$ is sufficiently large. It is therefore a signed approximation to a probability mass function. The variable $Z$ supplies a genuine probability distribution approximated by it in \eqref{eq:random-model-count-distribution}.

Relative to the Poisson probabilities, the correction is positive when $(m - u)^{2} < m$ and negative when $(m - u)^{2} > m$. This describes precisely the bias toward central counts. It also makes clear why adjusting only the mean cannot account for the effect: the correction leaves the mean unchanged and reduces the second centered moment.

Granville and Lumley \cite[Appendix~B]{GL2023} observe that their numerical distributions, grouped by the number of survivors of a small-prime sieve, are narrower than the corresponding binomial predictions. They compare the variances with the prediction of Montgomery and Soundararajan and suggest incorporating further dependence into the model. The calculation above describes such dependence and its effect on individual count probabilities when the mean is bounded. Their questions about extreme counts and tails involve other ranges; the absolute error in \eqref{eq:random-model-count-distribution} does not give a relative approximation to very small tail probabilities.

There is also a methodological connection with Banks, Ford and Tao \cite[Section~2.6]{BFT2023}: they explain how interpreting singular-series sums probabilistically can avoid restrictions in the available uniformity in the number of shifts. Here the same viewpoint permits the full generating function to be handled through the survivor count, with precise asymptotics needed only for low moments.

Finally, the parameter $u$ is local. To model intervals starting near $t$, we take $u = H/\log t$ and then average over the starting points. For $H \le U\log N$, this parameter is bounded on $t \ge \sqrt{N}$; the initial portion is negligible at our precision, as in the density-averaging argument. Averaging the local approximation leads to $F(m; H, M, N)$, and Section~\ref{sec:density-averaging} replaces this by $Q$ at the averaged density $\mu$, or at the shifted parameter $\lambda$. In particular, the shift in the denominator of $\lambda$ comes from averaging the varying density. The factor $\eta(H)$ comes from the pair correlations within an interval. These are separate effects.

All statements about the random variables above are unconditional. Applying the model to primes is a heuristic step: a uniform random translate modulo the very large modulus $q$ is not the same as an integer starting point chosen from $(M, M + N]$. The Hardy--Littlewood hypotheses in Section~\ref{sec:prime-tuples-to-prime-counts} give the conditional passage to the actual prime-count distribution. The model explains how one can anticipate the correction from local divisibility and a second-moment calculation before carrying out that passage.

\subsection{Further corrections}
\label{subsec:further-corrections}

Further corrections would require more precise information about higher centered singular-series sums. Kuperberg \cite[Theorem~1.2]{KUP2025} proves that the sum over ordered triples of distinct shifts is $O(H(\log H)^{5})$ and conjectures the smaller scale $H(\log H)^{2}$ \cite[Conjecture~1.1]{KUP2025}. Such estimates are relevant to the third factorial cumulant of the sieve model and hence to further Poisson--Charlier terms. They do not yet identify the next correction: this would also require a sharper treatment of the pair-average remainder and control of the remaining higher-order contributions.

\clearpage

\appendix

\section{Numerical comparisons}
\label{sec:numerical-comparisons}

We compare the empirical distribution with the corrected approximations and with the Poisson and binomial predictions of the independent model. Throughout this section $M = 0$, and we suppress $H$ and $N$ where convenient. Thus
\begin{equation*}
P(m) := P(m; H, 0, N) = \frac{1}{N}\left|\{1 \le n \le N : \pi(n + H) - \pi(n) = m\}\right|.
\end{equation*}
We use nine pairs $(N, H)$, chosen by
\begin{equation*}
N \in \{10^{7}, 10^{8}, 10^{9}\}, \qquad \kappa \in \{1, 3, 6\}, \qquad H = \left\lfloor\kappa(\log N - 1) + \frac{1}{2}\right\rfloor.
\end{equation*}
Here $\kappa$ is a target mean; all predictions use the actual integer $H$. These choices keep the effective mean close to $\kappa$ as $N$ increases. A sieve generates the primes needed through $N + H$, and a moving-window counter tallies the starting points giving each prime count. All $N$ intervals are counted, including those whose endpoints are prime. The implementation, exact counts, supplementary comparisons and animations are available with the accompanying code and Quarto book \cite{FRE2026}.

Write $p_{m}(u) := e^{-u}u^{m}/m!$. The averaged Poisson prediction and its arithmetic correction are
\begin{equation*}
F_{0}(m) := \frac{1}{N}\int_{2}^{N}p_{m}\left(\frac{H}{\log t}\right)dt
\end{equation*}
and
\begin{equation*}
F(m) := F(m; H, 0, N) = \frac{1}{N}\int_{2}^{N}Q\left(m; \frac{H}{\log t}, H\right)dt,
\end{equation*}
where $Q$ is defined in \eqref{eq:corrected-poisson-expression}. To distinguish the arithmetic correction from the ordinary binomial correction, we also use
\begin{equation*}
B_{\mathrm{av}}(m) := \frac{1}{N}\int_{e}^{N}\binom{H}{m}\left(\frac{1}{\log t}\right)^{m}\left(1 - \frac{1}{\log t}\right)^{H - m}dt \qquad (0 \le m \le H),
\end{equation*}
with $B_{\mathrm{av}}(m) = 0$ for $m > H$. This averages the local binomial approximation to Cram\'er's model; the lower limit $e$ ensures that the success probability is at most $1$. Finally, put
\begin{equation*}
\lambda := \frac{H}{\log N - 1}, \qquad \mu := \frac{H}{N}\int_{2}^{N}\frac{dt}{\log t},
\end{equation*}
and write $Q_{\lambda}(m) := Q(m; \lambda, H)$ and $Q_{\mu}(m) := Q(m; \mu, H)$. The former is the expression in Theorem~\ref{thm:intro-corrected-poisson}; the latter retains the exact averaged density. None of these parameters is fitted to the observed frequencies.

For an approximation $A$, we measure the discrepancies by
\begin{equation*}
D_{1}(A) := \sum_{m \, = \, 0}^{\infty}|P(m) - A(m)|, \qquad D_{2}(A) := \left(\sum_{m \, = \, 0}^{\infty}|P(m) - A(m)|^{2}\right)^{1/2}.
\end{equation*}
\begin{table}[!tbp]
\centering
\caption{Discrepancies for the nine pairs $(N, H)$, rounded to five decimal places. The three lengths at each scale target means $1$, $3$ and $6$, respectively. Smaller values indicate closer agreement with the empirical proportions.}
\label{tab:numerical-discrepancies}
\small
\setlength{\tabcolsep}{7pt}
\renewcommand{\arraystretch}{1.08}
\begin{tabular}{@{}rrrrrr@{}}
\toprule
\multicolumn{6}{c}{$D_{1}$ discrepancy} \\
\midrule
$N$ & $H$ & $F_{0}$ & $B_{\mathrm{av}}$ & $F$ & $Q_{\lambda}$ \\
\midrule
$10^{7}$ & 15 & 0.17400 & 0.13604 & 0.02914 & 0.02772 \\
 & 45 & 0.19931 & 0.16732 & 0.04997 & 0.04532 \\
 & 91 & 0.23342 & 0.20190 & 0.06765 & 0.05322 \\
\addlinespace[3pt]
$10^{8}$ & 17 & 0.15485 & 0.12208 & 0.02460 & 0.02377 \\
 & 52 & 0.17661 & 0.14736 & 0.03929 & 0.03607 \\
 & 105 & 0.20266 & 0.17519 & 0.05221 & 0.04085 \\
\addlinespace[3pt]
$10^{9}$ & 20 & 0.14254 & 0.11372 & 0.02277 & 0.02202 \\
 & 59 & 0.15905 & 0.13323 & 0.03242 & 0.03009 \\
 & 118 & 0.17975 & 0.15544 & 0.04232 & 0.03389 \\
\addlinespace[7pt]
\midrule
\multicolumn{6}{c}{$D_{2}$ discrepancy} \\
\midrule
$N$ & $H$ & $F_{0}$ & $B_{\mathrm{av}}$ & $F$ & $Q_{\lambda}$ \\
\midrule
$10^{7}$ & 15 & 0.08882 & 0.06965 & 0.01479 & 0.01358 \\
 & 45 & 0.07931 & 0.06683 & 0.01986 & 0.01684 \\
 & 91 & 0.07512 & 0.06526 & 0.02188 & 0.01656 \\
\addlinespace[3pt]
$10^{8}$ & 17 & 0.07937 & 0.06272 & 0.01234 & 0.01159 \\
 & 52 & 0.06949 & 0.05864 & 0.01566 & 0.01352 \\
 & 105 & 0.06486 & 0.05623 & 0.01667 & 0.01290 \\
\addlinespace[3pt]
$10^{9}$ & 20 & 0.07195 & 0.05741 & 0.01069 & 0.01015 \\
 & 59 & 0.06217 & 0.05259 & 0.01304 & 0.01145 \\
 & 118 & 0.05771 & 0.05003 & 0.01356 & 0.01073 \\
\bottomrule
\end{tabular}
\end{table}

Both sums include prime counts that never occur in the data. The predictions are neither clipped nor renormalized: $F$ and $F_{0}$ have total mass $1 - 2/N$, while $B_{\mathrm{av}}$ has total mass $1 - e/N$. The corrected expressions may take negative values, which enter the discrepancies with their signs intact. In these experiments the total negative mass $\sum_{m \, \ge \, 0}\max\{-F(m), 0\}$ ranges from $0.00145$ to $0.00367$.

The integrals are evaluated by adaptive numerical quadrature after the substitution $t = a e^{s}$, where $a$ is the lower integration limit. The sums defining the discrepancies are truncated beyond both $H$ and the largest observed count, with omitted absolute tails less than $10^{-12}$ for every displayed approximation. Quadrature tolerances were tightened and the predictions recalculated separately; the resulting discrepancies agreed well beyond the five decimal places reported below. A further comparison with $30$-digit quadrature for selected bins of the representative case gave differences below $4 \times 10^{-16}$ for $F$ and $F_{0}$. These are numerical checks, rather than certified bounds for the quadrature error.

Table~\ref{tab:numerical-discrepancies} shows that $F$ improves on both $F_{0}$ and $B_{\mathrm{av}}$ in every case and in both norms. Relative to $F_{0}$, the reduction is approximately $71$--$84\%$ in $D_{1}$ and $71$--$85\%$ in $D_{2}$. Both $Q_{\lambda}$ and $Q_{\mu}$ give smaller discrepancies than $F$ in all nine cases; the values for $Q_{\mu}$ are included in the supplementary data. The integral arising naturally in the proof need not be the most accurate approximation at a particular finite scale.

Figure~\ref{fig:numerical-prime-count-distribution} shows the preselected representative case $N = 10^{9}$, $H = 59$. The lower panel compares the observed departure from the averaged Poisson prediction with the predicted arithmetic correction. The correction captures the sign and much of the size of the departure, though a visible discrepancy remains near the center. These computations support the predicted correction across the nine prescribed cases; they do not isolate its constant term or establish an asymptotic error rate. The summed discrepancies also assess the whole distribution, whereas the conditional theorem is stated for each fixed $m$.

\begin{figure}[!htbp]
\centering
\includegraphics[width=\textwidth]{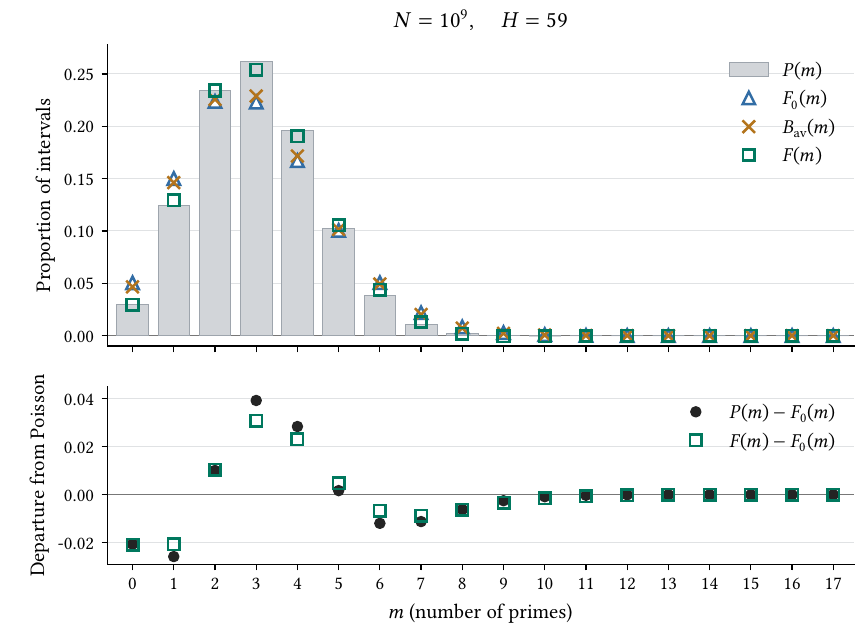}
\caption{Prime counts for $N = 10^{9}$ and $H = 59$, with $\lambda = 2.99139\ldots$ and $\mu = 3.00010\ldots$. Above: empirical proportions $P(m)$ and the predictions $F_{0}(m)$, $B_{\mathrm{av}}(m)$ and $F(m)$. Below: the observed departure $P(m) - F_{0}(m)$ and the predicted correction $F(m) - F_{0}(m)$. Only integer values of $m$ are plotted. The displayed range contains all observed counts; the discrepancy sums also include the predicted tails. Negative values of $F$ are retained, with minimum approximately $-0.000654$ at $m = 10$.}
\label{fig:numerical-prime-count-distribution}
\end{figure}

\clearpage

\section{From Gauss to Cram\'er}
\label{app:gauss-to-cramer}

The distribution of prime counts in short intervals has a rather distinguished numerical precedent. Among the tables preserved in Gauss's \emph{Nachlass} are tallies of the number of blocks of one hundred consecutive integers containing exactly $m$ primes. They record the same kind of statistic as the empirical distributions in Appendix~\ref{sec:numerical-comparisons}, with the starting points restricted to multiples of one hundred.

\subsection{Gauss's counts}
\label{subsec:gauss-centades}

In his letter to Johann Franz Encke of December 24, 1849, Gauss recalled counting primes in \emph{chiliads}, or blocks of one thousand integers, as early as 1792 or 1793 \cite[pp.~444--447]{GAU1863}. These counts suggested that the frequency of primes near $t$ was approximately $1/\log t$. Integrating this proposed density gives
\begin{equation*}
\pi(N) \approx \int_{2}^{N}\frac{dt}{\log t}.
\end{equation*}
The density was the starting point; the estimate for the cumulative count followed by integration. In the same letter, while discussing Legendre's approximation $N/(\log N - 1.08366)$ (see \cite[p.~65, \S VIII, no.~394]{LEG1830}), Gauss suggested that the differential of the counting function should be simpler than the function itself \cite[p.~446]{GAU1863}.

Gauss described returning to the tables in an ``idle quarter of an hour''; an English translation of this passage is reproduced by Tschinkel \cite{TSC2006}. Goldschmidt later filled gaps in the first million and extended the count through the first three million, using published factor tables. For the second and third million, Gauss prescribed a scheme that retained more information than the total number of primes. It recorded how many blocks of one hundred integers contained each possible prime count. These blocks are called \emph{centades} in the manuscript, and \emph{Hecatontaden} in the letter's explanation of the scheme \cite[p.~447]{GAU1863}.

Figure~\ref{fig:gauss-centades} shows the summary for the third million. The ten columns headed $210,220,\ldots,300$ have right endpoints in units of $10^{4}$: the first covers $2{,}000{,}000$ to $2{,}100{,}000$, and the last $2{,}900{,}000$ to $3{,}000{,}000$. Each column accounts for $1{,}000$ centades. The row labeled $m$ records the number containing exactly $m$ primes, and the final column aggregates the $10{,}000$ centades in the whole million. Multiplying each row total by $m$ and summing recovers the reported prime count, $67{,}862$. The correct count is $67{,}883$, a difference of just $21$; the logarithmic integral written beneath the table is approximately $67{,}915.73$. Thus the actual mean is $6.7883$ primes per centade. The rightmost column records the whole distribution around that mean.

\begin{figure}[!htbp]
\centering
\includegraphics[width=0.88\textwidth]{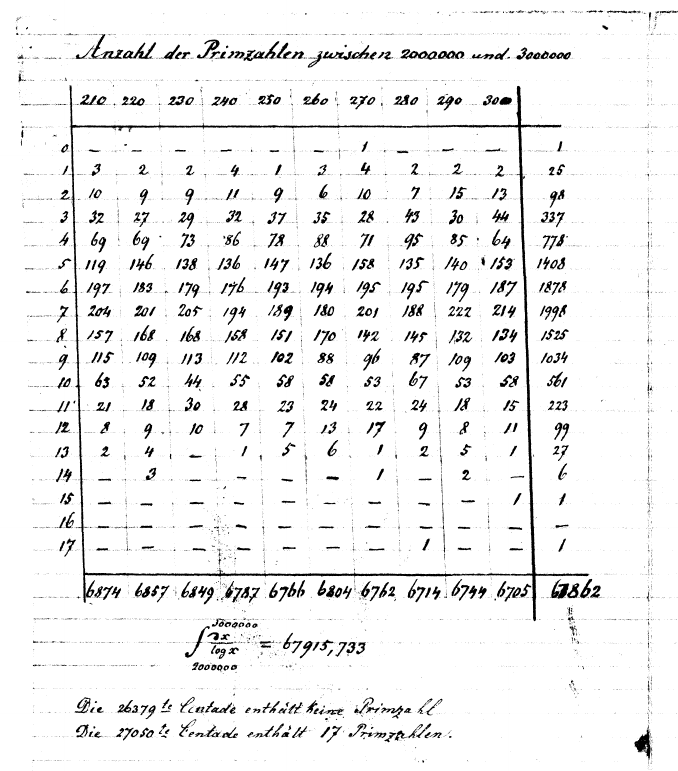}
\caption{The centade tally for primes between two and three million, from Gauss's \emph{Nachlass}; a printed version appears in \cite[p.~443]{GAU1863}. The entries reproduce the historical computation, including its errors. Source: Nieders\"achsische Staats- und Universit\"atsbibliothek G\"ottingen, mathematical \emph{Nachlass}, Math.~18; page~20 of the manuscript scan made available by Yuri Tschinkel and cited in \cite{TSC2006}.}
\label{fig:gauss-centades}
\end{figure}

These are non-overlapping intervals of fixed length $100$. Our asymptotic problem averages over every integer starting point and lets $H$ grow on the scale of the logarithm. The connection is the question being asked: how often does an interval contain a prescribed number of primes? The table supplies an empirical distribution, but the letter does not propose a Poisson law for it.

Riemann's memoir of 1859 \cite{RIE1859} connected the prime count with the zeros of the zeta function. Hadamard \cite{HAD1896} and de la Vall\'ee Poussin \cite{DEL1896} proved the prime number theorem in 1896, establishing the first-order asymptotic suggested by the logarithmic integral. A prediction for the full distribution of short-interval counts requires a further idea.

\subsection{From a density to coin flips}
\label{subsec:historical-cramer-model}

The indicator notation makes that idea particularly simple. The prime count in an interval is a sum of zeros and ones:
\begin{equation*}
X(n; H) = \sum_{h \, = \, 1}^{H}\mathbf{1}_{\mathcal{P}}(n + h).
\end{equation*}
Cram\'er's model \cite{CRA1935, CRA1936} replaces the indicators $\mathbf{1}_{\mathcal{P}}(j)$, for $j \ge 3$, by independent Bernoulli variables $B_{j}$ with
\begin{equation*}
\mathbb{P}(B_{j} = 1) = \frac{1}{\log j}, \qquad \mathbb{P}(B_{j} = 0) = 1 - \frac{1}{\log j}.
\end{equation*}
In other words, an independent coin flip decides whether each integer is declared prime, with a success probability chosen to match the proposed density.

For a short interval near a large integer $n$, freeze the probabilities at $1/\log n$. This gives a binomial approximation to the original model. Write $Y$ for the sum of these $H$ independent trials, and put $u := H/\log n$. Choosing the $m$ successful trials gives
\begin{equation*}
\mathbb{P}(Y = m) = \binom{H}{m}\left(\frac{u}{H}\right)^{m}\left(1 - \frac{u}{H}\right)^{H - m}.
\end{equation*}
For fixed $m$, the passage to a Poisson probability needs only a finite product and the elementary exponential limit. Indeed,
\begin{equation*}
\mathbb{P}(Y = m) = \frac{u^{m}}{m!}\left[\prod_{i \, = \, 0}^{m - 1}\left(1 - \frac{i}{H}\right)\right]\left(1 - \frac{u}{H}\right)^{H}\left(1 - \frac{u}{H}\right)^{-m}.
\end{equation*}
If $H \to \infty$ and $u \to \kappa > 0$, the finite product and the last factor tend to $1$, while the middle factor tends to $e^{-\kappa}$. Thus
\begin{equation*}
\mathbb{P}(Y = m) \to \frac{e^{-\kappa}\kappa^{m}}{m!}.
\end{equation*}
This permits $H$ to be an integer throughout: for example, take $H = \lfloor\kappa\log n\rfloor$. Freezing the density suffices for this first-order heuristic. At the precision of our secondary term, the varying density must be retained and averaged as in Section~\ref{sec:density-averaging}.

\subsection{Arithmetic correlations on average}
\label{subsec:historical-arithmetic-correlations}

Independence is the vulnerable step. For $a > 2$, the indicators $\mathbf{1}_{\mathcal{P}}(a)$ and $\mathbf{1}_{\mathcal{P}}(a + 1)$ cannot both be $1$: one of the two integers is even. More generally, divisibility conditions create correlations between shifted prime indicators.

To give the probabilistic language a precise meaning for the actual primes, choose $a$ uniformly among the integers in $(n,2n]$. For a fixed set $\mathcal{H}$ of $k$ distinct nonnegative shifts, the Hardy--Littlewood conjecture predicts, to first order,
\begin{equation*}
\mathbb{E}\prod_{h \, \in \, \mathcal{H}}\mathbf{1}_{\mathcal{P}}(a + h) = \frac{\mathfrak{S}(\mathcal{H}) + o(1)}{(\log n)^{k}} \qquad (n \to \infty).
\end{equation*}
The expectation here is over the starting point $a$. The independent model has the same first-order expression with $\mathfrak{S}(\mathcal{H})$ replaced by $1$. The singular series records the arithmetic dependence; for example, $\mathfrak{S}(\{0,1\}) = 0$.

The reason the independent model nevertheless gives the expected first-order answer is Gallagher's singular-series average \cite{GAL1976}: for each fixed $k$,
\begin{equation*}
\frac{1}{\binom{H}{k}}\sum_{\substack{\mathcal{H} \, \subseteq \, [H] \\ |\mathcal{H}| \, = \, k}}\mathfrak{S}(\mathcal{H}) \to 1 \qquad (H \to \infty).
\end{equation*}
The singular series can be far from $1$ for an individual set of shifts, but its average over those sets tends to $1$. These are precisely the averages that enter inclusion-exclusion for the event $X(a; H) = m$. With sufficient uniformity in the Hardy--Littlewood conjecture, their leading terms therefore give the same Poisson limit as independent coin flips.

The arithmetic dependence has not disappeared. The more precise average of Montgomery and Soundararajan \cite[equation~(17)]{MS2004} retains a contribution that the first-order limit discards. Our argument carries that contribution through inclusion-exclusion, producing the correction of order $(\log H)/H$. In this sense the singular-series average explains both why Cram\'er's prediction succeeds to first order and why arithmetic reappears in the next term.

\clearpage

\section{AI assistance, provenance and verification}
\label{app:ai-provenance}

The question studied in this paper predates the use of generative AI in its preparation. In the concluding remarks of \cite[Section~5]{FRE2018}, the author proposed obtaining lower-order terms in the distribution of prime counts in short intervals by combining inclusion--exclusion with more precise singular-series averages and a sufficiently uniform Hardy--Littlewood hypothesis. The project subsequently appeared in his research statements. Related code, numerical experiments and exposition have been available in his GitHub repository since 2023; version \texttt{1.1.0} of that repository is archived in the 2026 release \cite{FRE2026}.

Before the present collaboration, the author had developed the formal approximation and the inclusion--exclusion argument and investigated the predicted bias numerically. He had also identified the necessary scale $(\log H)/\log\log H$ for the truncation index. A calculation communicated by Andrew Granville in June 2023 clarified the treatment of the logarithmic integral and corrected the coefficient of the secondary term. The unresolved difficulty was to control the singular-series contribution when the truncation index grows with $H$. This was the main obstacle to completing the project.

The author used GPT-6~Astra (OpenAI) in an extended dialogue to resume the project, supplying the earlier work and relevant mathematical sources. The principal new proof idea supplied by GPT-6~Astra was the finite-sieve argument in Section~\ref{sec:singular-series-finite-sieve}. It proposed representing the singular-series sums through factorial moments of a finite random sieve, combining a deterministic bound for the number of survivors with low-moment estimates, and using a generating function to control the required alternating sums. This resolved the obstacle without an additional conjecture about singular-series averages or an extension of a fixed-order asymptotic to growing tuple sizes.

GPT-6~Astra also contributed to the organization of the residuals, the development and drafting of proofs, the density-averaging argument in Section~\ref{sec:density-averaging}, the interpretation through a random model, and the exposition. The author directed the investigation, questioned proposed arguments, checked the proofs and revised the text through repeated exchanges. Claude Fable~5.1 (Anthropic) assisted with revising and checking the numerical code and with preparing the computations and plots reported in Appendix~\ref{sec:numerical-comparisons}. The original research program and the earlier heuristic, inclusion--exclusion and numerical work were the author's; the substantial mathematical and computational contributions of the AI systems are identified above.

Drafts of the mathematical arguments were submitted to Claude Fable~5.1 for separate audits, with particular attention to the finite-sieve proof and the random-model section. The author examined the reports and checked the issues they raised; these audits informed further revisions. They supplemented the author's own verification of the proofs.

The author has checked the arguments, including some determined attempts to break them, and is prepared to defend the proofs presented here. He takes responsibility for the correctness of the mathematical claims, the use of sources and the final text. Any error found will require correction or, if necessary, withdrawal of the affected claims.

\end{document}